\numberwithin{equation}{section} 
\newtheorem{thm}[equation]{Theorem}
\newtheorem{prop}[equation]{Proposition}
\newtheorem{lemma}[equation]{Lemma}
\newtheorem{cor}[equation]{Corollary}
\newtheorem{example}[equation]{Example}
\newtheorem{remark}[equation]{Remark}
\newtheoremstyle{efronremark}
{6pt}{6pt}{}{}{\itshape}{\quad}{ }{\thmnote{#3}}
\theoremstyle{efronremark}   \newtheorem*{eremark}{}
\def\fsl{\mathfrak{gl}} 
\def\End{ \mathsf{End}}
\def\Res{\mathsf{Res}}
\def\min{\mathsf {min}}
\def\rank{\mathsf {rank}}
\def\id{{\rm id}}
\def\whitebull{\circ}
\def\redbull{\circ}
\newcommand{\cP}{\mathcal{P}}
\newcommand{\cM}{\mathcal{M}}
\newcommand{\Mn}{{\mathcal M}_n}
\newcommand{\Mk}{{\mathcal M}_k}
\newcommand{\Mtwok}{{\mathcal M}_{2k}}
\newcommand{\Motz}{\M_k(\mathsf{\zeta_{q}})}
\newcommand{\KK}{\mathbb{K}}
\newcommand{\C}{\mathsf{C}}  
\newcommand{\cb}{\mathsf{n}}
\newcommand{\G}{\mathsf{G}} 
\newcommand{\N}{\mathsf{N}} 
\newcommand{\NN}{\mathbb{N}} 
\newcommand{\LL}{\mathsf{L}} 
\newcommand{\LP}{\mathsf{LP}} 
\newcommand{\M}{\mathsf{M}} 
\newcommand{\J}{\mathsf{J}} 
\newcommand{\qq}{\mathsf{q}}
\newcommand{\RR}{\mathsf{R}} 
\newcommand{\RP}{\mathsf{RP}} 
\newcommand{\TL}{\mathsf{TL}}
\newcommand{\T}{\mathsf{T}}  
\newcommand{\UU}{\mathsf{U}}  
\newcommand{\VV}{\mathsf{V}}
\newcommand{\W}{\mathsf{W}}  
\newcommand{\uqsl}{\UU_{\mathsf q}(\fsl_2)}
\newcommand{\ranglet}{\rangle_{\mathsf t}}
\newcommand{\rangleb}{\rangle_{\mathsf b}} 
\newcommand{\zq}{\mathsf{\zeta_{q}}}
\begin{document}

\title{Motzkin Algebras} 

\author{\textsc{Georgia Benkart} \\
\textit{Department of Mathematics} \\ 
\textit{University of Wisconsin}  \\
\textit{Madison, WI 53706, USA}\\
\texttt{benkart@math.wisc.edu}
\and
\textsc{Tom Halverson}\\
\textit{Department of Mathematics}\\
\textit{Macalester College}\\
\textit{Saint Paul, MN 55105, USA}\\
\texttt{halverson@macalester.edu}} 

\date{June 18, 2013}
\maketitle

\begin{abstract}   We introduce an associative algebra $\M_k(x)$ whose dimension is the
$2k$-th Motzkin number.  The algebra $\M_k(x)$ has a basis of ``Motzkin diagrams,''
which are analogous to Brauer and Temperley-Lieb diagrams.  We show for a particular value of  $x$ that the algebra  $\M_k(x)$  is the centralizer algebra of the quantum enveloping algebra
$\uqsl$ acting on the $k$-fold tensor power of the sum of the 1-dimensional and 2-dimensional irreducible
$\uqsl$-modules.   We prove that $\M_k(x)$ is cellular in the sense of Graham and Lehrer  and 
construct  indecomposable $\M_k(x)$-modules which are the left cell modules. 
When $\M_k(x)$ is a semisimple algebra, these modules provide a complete set of representatives of isomorphism classes of irreducible
$\M_k(x)$-modules.  We compute the determinant of the Gram matrix of a bilinear form on the cell modules and use these determinants to show that $\M_k(x)$ is semisimple exactly when $x$ is not the root of certain Chebyshev polynomials. \medskip

\begin{eremark}[Keywords:] Motzkin number, Catalan number, Motzkin algebra, Temperley-Lieb algebra, $\fsl_2$ quantum enveloping algebra, Schur-Weyl duality, cellular algebra
\ 
\end{eremark}

\begin{eremark}[AMS 2010 Subject Classification:]
Primary 05E10,  Secondary 17B37, 81R50, 82B20.
\end{eremark}

\end{abstract}
 
 \bigskip

\begin{center} {\textbf{Acknowledgments}}   \end{center}
This paper was begun while the authors participated 
in the program  Combinatorial Representation Theory  
at the Mathematical Sciences Research Institute (MSRI) in Spring 2008.  They acknowledge
with gratitude the hospitality  and stimulating research
environment of MSRI.  G. Benkart is grateful to the Simons
Foundation for the support she received as  a Simons Visiting Professor at MSRI.  T. Halverson was partially supported by National Science Foundation grant DMS-0800085. The authors  also thank Anton Cox and Paul Martin for helping to clarify the connection to towers of recollement in Remark \ref{rem:CMPX}. 
\newpage

\tableofcontents

\begin{section}{Introduction}

The \emph{Motzkin  numbers} $\Mn$, $n=0,1,\dots$,   are defined by the generating function
\begin{align*}
\cM(t) & = \sum_{n \ge 0} \Mn t^n  =  \frac{1 - t - \sqrt{1-2t-3t^2}}{2t^2} \\
& = 1 + t + 2 t^2 + 4t^3 + 9t^4 + 21t^5 + 51 t^6 + 127 t^7 + 323 t^8 +  \cdots 
\end{align*}
which satisfies     
$$
\cM(t) = 1 + t \cM(t) + t^2 \cM^2(t).
$$
Although not as ubiquitous as their relatives the Catalan numbers, the Motzkin numbers  have appeared in a variety of combinatorial settings (see  \cite{M}, \cite{DS},  \cite{A}, \cite[6.38]{St}, \cite{E}).
For example, $\Mn$ counts 
\begin{itemize}
\item the number of ways of drawing any number of nonintersecting chords
among $n$ points on a circle; 
\item the number of lattice paths from (0,0)
to $(n,0)$ with steps $(1,-1)$, (1,0), (1,1), never going below the $x$-axis;  or
\item  the number of walks on $\mathbb N = \{0,1,\dots\}$
with $n$ steps from $\{-1,0,1\}$ starting and ending at 0; 
\item the number of standard tableaux of shape $\lambda$ with entries $\{1,\dots, n\}$ over all 
partitions $\lambda$ of $n$ with no more than 3 parts.    \end{itemize}

In Section 2,  we introduce a unital associative algebra $\M_k(x)$, depending on the parameter $x$,  whose dimension is the Motzkin number
$\Mtwok$.   The algebra $\M_k(x)$ is defined over
any commutative ring $\KK$ with $1$ and  has a $\KK$-basis of diagrams similar to those
for the Temperley-Lieb,  Brauer, and  partition algebras.  We refer 
to the algebras $\M_k(x)$ as  {\it Motzkin algebras}  because of their close association with the Motzkin numbers.
Although in analogy with the rook algebras in \cite{Mu1},\cite{Mu2}, \cite{So1}, \cite{So2}, \cite{FHH}, the
rook partition algebras in \cite{Gr}, and  the rook Brauer algebras in \cite{dH}, \cite{MM}, the name ``rook Temperley-Lieb algebras" or ``partial Temperley-Lieb algebras" 
would also be appropriate.

 In Section 2.3, we show that the algebra $\M_k(x)$ is  generated by certain diagrams $p_j \ (1 \leq j \leq k)$,  $\ell_i,\,r_i,\,t_i \  (1 \leq i < k)$,   where
 the subalgebra $\TL_k(x)$ generated by the $t_i \ (1 \leq i < k)$ is the well-studied Temperley-Lieb algebra  
with dimension the Catalan number ${\mathcal C}_k = \frac{1}{k+1}{{2k}\choose k}$.    The 
Temperley-Lieb algebra  first arose
in statistical mechanics \cite{TL} and has played a prominent role in Jones'
work (\cite {Jo1}, \cite{Jo3}) on
subfactors of von Neumann algebras and invariants of knots and links \cite{Jo2}.        
We show that the algebra   $\M_k(x)$ has a factorization  $\M_k(x) = \RP_k \TL_k(x)\,  \LP_k$ into three subalgebras, 
and every basis diagram of $\M_k(x)$ can be expressed as a product of diagrams, 
one from each of the three components. 
The subalgebra $\LP_k$ is  generated
by the elements $\ell_i  \  (1 \leq i < k)$ and $p_j \ (1 \leq j \leq k)$ and has dimension the Catalan number
${\mathcal C}_{k+1}$.    The  $r_i,   \  (1 \leq i < k)$ and $p_j \ (1 \leq j \leq k)$ generate the subalgebra $\RP_k$, which  is anti-isomorphic to  $\LP_k$ via 
an involution that interchanges $\RP_k$ and $\LP_k$
and restricts to an involution on the Temperley-Lieb subalgebra $\TL_k(x)$, so
it too has dimension  ${\mathcal C}_{k+1}$.  

In Section 3, we consider the  action  
of the quantum enveloping algebra $\uqsl$  on the $k$-fold tensor power ${\VV}^{\otimes k}$,
where $\VV = \VV(0) \oplus \VV(1)$,  the sum of the trivial $1$-dimensional module $\VV(0)$
and the natural $2$-dimensional module $\VV(1)$ for $\uqsl$  when $\KK$ is a field and $\qq \in \KK$ is nonzero and not a root of unity.   We show the centralizer algebra of the $\uqsl$-action is an algebra
$\M_k$ whose dimension is the Motzkin number ${\mathcal M}_{2k}$.   The algebra
$\M_k$ is semisimple and has irreducible modules $\M_k^{(r)}$ indexed by 
the integers $r = 0,1,\dots, k$.  
The dimension of $\M_k^{(r)}$ is $\mathsf{m}_{k,r}$, the number of walks on $\mathbb N$ with
$k$ steps from  $\{-1,0,1\}$ starting at 0 and ending at $r$, and ${\mathcal M}_{2k} = \sum_{r=0}^k
\mathsf{m}_{k,r}^2$.   
By comparison, the centralizer algebra
of the action of $\uqsl$  on $\VV(1)^{\otimes k}$ is the  
Temperley-Lieb algebra $\mathsf{TL}_k(\qq+\qq^{-1})$. 
Using facts about the Temperley-Lieb algebra and its irreducible modules, we 
derive a closed form expression for the numbers  $\mathsf{m}_{k,r}$.

Our main results in Section 3 are Theorem \ref{thm:main}, which shows that the centralizer algebra $\M_k$ is
isomorphic to the Motzkin algebra 
 $\Motz$, where $\zq = 1-\qq-\qq^{-1}$, and Theorem \ref{thm:tensordecomp},  which gives an explicit 
 decomposition of $\VV^{\otimes k}$ into irreducible $\uqsl$-modules.  
 Finding this decomposition was one of our original motivations for introducing the Motzkin algebras.    It follows that 
 when $\qq \rightarrow 1$, the algebra $\M_k(-1)$ is the centralizer algebra of the action of $\fsl_2$ on tensor
 powers of the sum of its natural 2-dimensional and trivial representations.  
 
In Section 4, we define an action of any Motzkin algebra $\M_k(x)$  
 on certain paths, which we term {\em Motzkin paths} and which are related to the lattice paths
and to the walks on $\mathbb N$ mentioned earlier.   The Motzkin paths enable us to construct indecomposable $\M_k(x)$-modules  $\C_k^{(r)}$ labeled by 
$r = 0,1,\dots, k$.   By Theorem \ref{thm:irredmod},   $\{\C_k^{(r)} \mid r = 0,1,\dots, k\}$ is a complete set of representatives
of isomorphism classes of irreducible $\M_k(x)$-modules when
the algebra $\M_k(x)$ is semisimple.  Section 4.2 shows that  algebra $\M_k(x)$ is cellular in the sense of Graham and Lehrer \cite{GL} and that the modules $\C_k^{(r)}$ are the left cell representations (see Theorem \ref{thm:cellmod}).   
The values of the characters of these modules on  basis diagrams  
are equal to the numbers $\mathsf{m}_{\ell,r}$ for $0 \leq \ell \leq k$ according to Proposition \ref{prop:charac}.
In Theorem \ref{thm:GramDet}, we give an explicit formula for  the determinant of the Gram matrices for the modules
$\C_k^{(r)}$ in terms of Chebyshev polynomials of the second kind.    Theorem \ref{thm:cheby} provides  a criterion for the algebra $\M_k(x)$ to be semisimple using expressions in the roots of the Chebyshev polynomials.    
 
 The results in this paper give representation-theoretic interpretations  of  some known identities \eqref{eq:MCrel}, \eqref{eq:restrictionrule} involving Motzkin numbers.   Equations \eqref{eq:sumofsqs}, \eqref{eq:bimoduleidentity}, and \eqref{eq:MT} provide additional Motzkin identities which arise naturally in this
setting.  
\end{section} 
  
\begin{section}{The Motzkin Algebra}

\begin{subsection}{Motzkin diagrams}

A \emph{Motzkin $k$-diagram} consists of two rows each with $k$ vertices such that each vertex is connected to at most one other vertex
by an edge, and the edges are {\em planar}, which is to say they can be drawn without crossing while staying inside the rectangle formed by the vertices.    For example
$$
d =  {\beginpicture
\setcoordinatesystem units <0.5cm,0.3cm>        
\setplotarea x from 0 to 8, y from -1 to 2
\put{$\bullet$} at  1 -1  \put{$\bullet$} at  1 2
\put{$\bullet$} at  2 -1  \put{$\bullet$} at  2 2
\put{$\bullet$} at  3 -1  \put{$\bullet$} at  3 2
\put{$\bullet$} at  4 -1  \put{$\bullet$} at  4 2
\put{$\bullet$} at  5 -1  \put{$\bullet$} at  5 2
\put{$\bullet$} at  6 -1  \put{$\bullet$} at  6 2
\put{$\bullet$} at  7 -1  \put{$\bullet$} at  7 2
\plot  4 2  1 -1 /
\plot 5 2 2 -1 /
\plot 6 2 7 -1  /
\setquadratic
\plot 1 2 2 1 3 2 /
\plot 3 -1 4.5 .25 6 -1 /
\plot 4 -1 4.5 -.25 5 -1 /
\endpicture}
$$
is a Motzkin $7$-diagram, and   
$$
\begin{array}{ccccc}
{\beginpicture
\setcoordinatesystem units <0.55cm,0.55cm>        
\setplotarea x from 1 to 2, y from 0 to 1
\put{$\bullet$} at  1 0  \put{$\bullet$} at  1 1
\put{$\bullet$} at  2 0  \put{$\bullet$} at  2 1
\plot 1 0 1 1 /
\plot 2 0 2 1 /
\endpicture} \qquad
{\beginpicture
\setcoordinatesystem units <0.55cm,0.55cm>        
\setplotarea x from 1 to 2, y from 0 to 1
\put{$\bullet$} at  1 0  \put{$\bullet$} at  1 1
\put{$\bullet$} at  2 0  \put{$\bullet$} at  2 1
\plot 1 0 1 1 /
\endpicture}  \qquad
{\beginpicture
\setcoordinatesystem units <0.55cm,0.55cm>        
\setplotarea x from 1 to 2, y from 0 to 1
\put{$\bullet$} at  1 0  \put{$\bullet$} at  1 1
\put{$\bullet$} at  2 0  \put{$\bullet$} at  2 1
\plot 2 0 2 1 /
\endpicture} \qquad
{\beginpicture
\setcoordinatesystem units <0.55cm,0.55cm>        
\setplotarea x from 1 to 2, y from 0 to 1
\put{$\bullet$} at  1 0  \put{$\bullet$} at  1 1
\put{$\bullet$} at  2 0  \put{$\bullet$} at  2 1
\plot 1 0 2 1 /
\endpicture} \qquad
{\beginpicture
\setcoordinatesystem units <0.55cm,0.55cm>        
\setplotarea x from 1 to 2, y from 0 to 1
\put{$\bullet$} at  1 0  \put{$\bullet$} at  1 1
\put{$\bullet$} at  2 0  \put{$\bullet$} at  2 1
\plot 1 1 2 0 /
\endpicture} \qquad
{\beginpicture
\setcoordinatesystem units <0.55cm,0.55cm>        
\setplotarea x from 1 to 2, y from 0 to 1
\put{$\bullet$} at  1 0  \put{$\bullet$} at  1 1
\put{$\bullet$} at  2 0  \put{$\bullet$} at  2 1
\setquadratic
\plot 1 1 1.5 .75 2 1 /
\plot 1 0 1.5 .25 2 0 /
\endpicture} \qquad
{\beginpicture
\setcoordinatesystem units <0.55cm,0.55cm>        
\setplotarea x from 1 to 2, y from 0 to 1
\put{$\bullet$} at  1 0  \put{$\bullet$} at  1 1
\put{$\bullet$} at  2 0  \put{$\bullet$} at  2 1
\setquadratic
\plot 1 1 1.5 .75 2 1 /
\endpicture} \qquad
{\beginpicture
\setcoordinatesystem units <0.55cm,0.55cm>        
\setplotarea x from 1 to 2, y from 0 to 1
\put{$\bullet$} at  1 0  \put{$\bullet$} at  1 1
\put{$\bullet$} at  2 0  \put{$\bullet$} at  2 1
\setquadratic
\plot 1 0 1.5 .25 2 0 /
\endpicture} \qquad
{\beginpicture
\setcoordinatesystem units <0.55cm,0.55cm>        
\setplotarea x from 1 to 2, y from 0 to 1
\put{$\bullet$} at  1 0  \put{$\bullet$} at  1 1
\put{$\bullet$} at  2 0  \put{$\bullet$} at  2 1
\endpicture}
\end{array}
$$
are the nine Motzkin 2-diagrams.  The number of Motzkin $k$-diagrams  is the same as the number of ways of drawing any number of nonintersecting chords among $2k$ points on a circle, which is known \cite[6.38.a]{St} to be the Motzkin number $\mathcal{M}_{2k}$.  

 A Motzkin $k$-diagram with $k$ edges  is a \emph{Temperley-Lieb diagram} (see \cite{TL} or \cite{GHJ}).   The number of Temperley-Lieb diagrams is the number of ways of drawing $k$  nonintersecting chords on $2k$ points on a circle, which is the Catalan number
 \begin{equation}
\mathcal{C}_{k}= \frac{1}{k+1} \binom{2k}{k}.
 \end{equation}
 We can count the number of  Motzkin $k$-diagrams according to the number $n$ of edges  in the diagram. First choose, in $\binom{2k}{2n}$ ways, the $2n$ vertices to be connected by the $n$ edges. Then the number of planar ways to connect these edges is the Catalan number $\mathcal{C}_{n}$. Thus,   
  \begin{equation}\label{eq:MCrel}
\mathcal{M}_{2k}= \sum_{n=0}^k  \binom{2k}{2n} \mathcal{C}_n = \sum_{n=0}^k 
\frac{1}{n+1} \binom{2k}{2n}\binom{2n}{n}.
 \end{equation}
 
 \end{subsection}
 
 \begin{subsection}{The Motzkin algebras $\M_k(x,y)$ and $\M_k(x)$}\label{subsec:motzkinalgebra}
 
Assume $\KK$ is a commutative ring with 1 and that $x$ and $y$ are elements of $\KK$.   
Set  $\M_0(x,y) = \KK 1$, and for $k \geq 1$,  let $\M_k(x,y)$ be the free $\KK$-module with basis the
Motzkin $k$-diagrams.   
We multiply two Motzkin $k$-diagrams $d_1$ and $d_2$ as follows. Place $d_1$ above $d_2$ and identify the bottom-row vertices in $d_1$ with the corresponding top-row vertices in $d_2$. The product is $d_1 d_2 =x^{\kappa(d_1,d_2)} y^{\upsilon(d_1,d_2)} d_3$, where $d_3$ is the diagram consisting 
of the horizontal edges from the top row of $d_1$, the horizontal edges from the bottom row of $d_2$, and the vertical edges that propagate from the bottom of $d_2$ to  the top of $d_1$;   $\kappa(d_1,d_2)$ is the number of loops that arise in the middle row;  and  and $\upsilon(d_1,d_2)$ is the number of isolated vertices that remain in the middle row.  For example, if
$$
d_1 = 
{\beginpicture
\setcoordinatesystem units <0.4cm,0.3cm>        
\setplotarea x from 1 to 9, y from -1 to 2
\put{$\bullet$} at  1 -1  \put{$\bullet$} at  1 2
\put{$\bullet$} at  2 -1  \put{$\bullet$} at  2 2
\put{$\bullet$} at  3 -1  \put{$\bullet$} at  3 2
\put{$\bullet$} at  4 -1  \put{$\bullet$} at  4 2
\put{$\bullet$} at  5 -1  \put{$\bullet$} at  5 2
\put{$\bullet$} at  6 -1  \put{$\bullet$} at  6 2
\put{$\bullet$} at  7 -1  \put{$\bullet$} at  7 2
\put{$\bullet$} at  8 -1  \put{$\bullet$} at  8 2
\put{$\bullet$} at  9 -1  \put{$\bullet$} at  9 2
\plot 5 2  7 -1 /
\plot 6 2 8 -1 /
\plot 9 2 9 -1 /
\setquadratic
\plot 1 2 2 1 3 2 /
\plot 7 2 7.5 1 8 2 /
\plot 1 -1 3.5 .5 6 -1 /
\plot 2 -1 3 0 4 -1 /
\endpicture}
\qquad\hbox{ and }\qquad
d_2 = {\beginpicture
\setcoordinatesystem units <0.4cm,0.3cm>        
\setplotarea x from 1 to 9, y from -1 to 2
\put{$\bullet$} at  1 -1  \put{$\bullet$} at  1 2
\put{$\bullet$} at  2 -1  \put{$\bullet$} at  2 2
\put{$\bullet$} at  3 -1  \put{$\bullet$} at  3 2
\put{$\bullet$} at  4 -1  \put{$\bullet$} at  4 2
\put{$\bullet$} at  5 -1  \put{$\bullet$} at  5 2
\put{$\bullet$} at  6 -1  \put{$\bullet$} at  6 2
\put{$\bullet$} at  7 -1  \put{$\bullet$} at  7 2
\put{$\bullet$} at  8 -1  \put{$\bullet$} at  8 2
\put{$\bullet$} at  9 -1  \put{$\bullet$} at  9 2
\plot 3 2  1 -1 /
\plot 7 2 4 -1 /
\setquadratic
\plot 1 2 1.5 1 2 2 /
\plot 4 2 5 1 6 2 /
\plot 8 2 8.5 1 9 2 / 
\plot 5 -1 6.5 .25 8 -1 /
\plot 6 -1 6.5 -.25 7 -1 /
\endpicture}
$$
then
$$
d_1 d_2 = 
\begin{array}{l}
{\beginpicture
\setcoordinatesystem units <0.4cm,0.3cm>        
\setplotarea x from 1 to 9, y from -1 to 2
\put{$\bullet$} at  1 -1  \put{$\bullet$} at  1 2
\put{$\bullet$} at  2 -1  \put{$\bullet$} at  2 2
\put{$\bullet$} at  3 -1  \put{$\bullet$} at  3 2
\put{$\bullet$} at  4 -1  \put{$\bullet$} at  4 2
\put{$\bullet$} at  5 -1  \put{$\bullet$} at  5 2
\put{$\bullet$} at  6 -1  \put{$\bullet$} at  6 2
\put{$\bullet$} at  7 -1  \put{$\bullet$} at  7 2
\put{$\bullet$} at  8 -1  \put{$\bullet$} at  8 2
\put{$\bullet$} at  9 -1  \put{$\bullet$} at  9 2
\plot 5 2  7 -1 /
\plot 6 2 8 -1 /
\plot 9 2 9 -1 /
\setquadratic
\plot 1 2 2 1 3 2 /
\plot 7 2 7.5 1 8 2 /
\plot 1 -1 3.5 .5 6 -1 /
\plot 2 -1 3 0 4 -1 /
\endpicture}
\\
{\beginpicture
\setcoordinatesystem units <0.4cm,0.3cm>        
\setplotarea x from 1 to 9, y from -1 to 2
\put{$\bullet$} at  1 -1  \put{$\bullet$} at  1 2
\put{$\bullet$} at  2 -1  \put{$\bullet$} at  2 2
\put{$\bullet$} at  3 -1  \put{$\bullet$} at  3 2
\put{$\bullet$} at  4 -1  \put{$\bullet$} at  4 2
\put{$\bullet$} at  5 -1  \put{$\bullet$} at  5 2
\put{$\bullet$} at  6 -1  \put{$\bullet$} at  6 2
\put{$\bullet$} at  7 -1  \put{$\bullet$} at  7 2
\put{$\bullet$} at  8 -1  \put{$\bullet$} at  8 2
\put{$\bullet$} at  9 -1  \put{$\bullet$} at  9 2
\plot 3 2  1 -1 /
\plot 7 2 4 -1 /
\setquadratic
\plot 1 2 1.5 1 2 2 /
\plot 4 2 5 1 6 2 /
\plot 8 2 8.5 1 9 2 / 
\plot 5 -1 6.5 .25 8 -1 /
\plot 6 -1 6.5 -.25 7 -1 /
\endpicture}
\end{array}
= x y\ 
{\beginpicture
\setcoordinatesystem units <0.4cm,0.3cm>        
\setplotarea x from 1 to 9, y from -1 to 2
\put{$\bullet$} at  1 -1  \put{$\bullet$} at  1 2
\put{$\bullet$} at  2 -1  \put{$\bullet$} at  2 2
\put{$\bullet$} at  3 -1  \put{$\bullet$} at  3 2
\put{$\bullet$} at  4 -1  \put{$\bullet$} at  4 2
\put{$\bullet$} at  5 -1  \put{$\bullet$} at  5 2
\put{$\bullet$} at  6 -1  \put{$\bullet$} at  6 2
\put{$\bullet$} at  7 -1  \put{$\bullet$} at  7 2
\put{$\bullet$} at  8 -1  \put{$\bullet$} at  8 2
\put{$\bullet$} at  9 -1  \put{$\bullet$} at  9 2
\plot 5 2  4 -1 /
\setquadratic
\plot 6 2 7.5 .5 9 2 /
\plot 1 2 2 1 3 2 /
\plot 7 2 7.5 1 8 2 /
\plot 5 -1 6.5 .25 8 -1 /
\plot 6 -1 6.5 -.25 7 -1 /
\endpicture}.
$$
Diagram multiplication makes $\M_k(x,y)$ into an associative algebra with identity element
$$
\mathbf{1}_k = 
{\beginpicture
\setcoordinatesystem units <0.35cm,0.15cm>        
\setplotarea x from 1 to 4, y from -1 to 2
\put{$\bullet$} at  1 -1  \put{$\bullet$} at  1 2
\put{$\bullet$} at  2 -1  \put{$\bullet$} at  2 2
\put{$\cdots$} at  3.5 -1  \put{$\cdots$} at  3.5 2
\put{$\bullet$} at  5 -1  \put{$\bullet$} at  5 2
\plot 1 2 1 -1 /
\plot 2 2 2 -1 /
\plot 5 2 5 -1 /
\endpicture}.
$$
Note that under this multiplication, two vertical 
edges can become a single horizontal edge as in the example above, 
and a vertical edge can contract to a vertex.  
The {\em rank} of a diagram $\rank(d)$ is the number of vertical edges in the diagram.  Therefore,  
\begin{equation}\label{eq:rank}
\rank( d_1 d_2) \le \min( \rank(d_1), \rank(d_2) ).
\end{equation} 

 In this paper we restrict our attention to the specialization $\M_k(x) := \M_k(x,1)$.
For $0 \le r \le k$, we let $\J_r \subseteq \M_k(x)$ be the $\KK$-span of the Motzkin $k$-diagrams of rank \emph{less than or equal} to $r$.   Then, by \eqref{eq:rank}, $\J_r$ is a two-sided ideal in $\M_k(x)$ and we have the tower of ideals,
\begin{equation}\label{eq:TowerOfIdeals}
\J_0 \subseteq \J_1 \subseteq \J_2 \subseteq \cdots \subseteq \J_k = \M_k(x).
\end{equation}  \end{subsection}

 \begin{subsection}{Generators for the Motzkin algebra $\M_k(x)$}\label{subsec:generators}
 
For $1 \le i \le k-1$,  consider the following diagrams in $\M_k(x)$,
\begin{equation}\label{eq:trlp}
t_i = {\beginpicture
\setcoordinatesystem units <0.35cm,0.15cm>        
\setplotarea x from 1 to 9, y from -1 to 4
\put{$\bullet$} at  1 -1  \put{$\bullet$} at  1 2
\put{$\bullet$} at  2 -1  \put{$\bullet$} at  2 2
\put{$\cdots$} at  3 -1  \put{$\cdots$} at  3 2
\put{$\bullet$} at  4 -1  \put{$\bullet$} at  4 2
\put{$\bullet$} at  5 -1  \put{$\bullet$} at  5 2
\put{$\bullet$} at  6 -1  \put{$\bullet$} at  6 2
\put{$\bullet$} at  7 -1  \put{$\bullet$} at  7 2
\put{$\cdots$} at  8 -1  \put{$\cdots$} at  8 2
\put{$\bullet$} at  9 -1  \put{$\bullet$} at  9 2
\plot 1 2 1 -1 /
\plot 2 2 2 -1 /
\plot 4 2 4 -1 /
\plot 7 2 7 -1 /
\plot 9 2 9 -1 /
\setquadratic
\plot 5 2 5.5 1 6 2 /
\plot 5 -1 5.5 0 6 -1 /
\put{$\scriptstyle{i\,}$} at 5 3.85 
\put{$\scriptstyle{i\!+\!1}$} at 6 3.75 
\endpicture}, \quad 
r_i = {\beginpicture
\setcoordinatesystem units <0.35cm,0.15cm>        
\setplotarea x from 1 to 9, y from -1 to 4
\put{$\bullet$} at  1 -1  \put{$\bullet$} at  1 2
\put{$\bullet$} at  2 -1  \put{$\bullet$} at  2 2
\put{$\cdots$} at  3 -1  \put{$\cdots$} at  3 2
\put{$\bullet$} at  4 -1  \put{$\bullet$} at  4 2
\put{$\bullet$} at  5 -1  \put{$\bullet$} at  5 2
\put{$\bullet$} at  6 -1  \put{$\bullet$} at  6 2
\put{$\bullet$} at  7 -1  \put{$\bullet$} at  7 2
\put{$\cdots$} at  8 -1  \put{$\cdots$} at  8 2
\put{$\bullet$} at  9 -1  \put{$\bullet$} at  9 2
\plot 1 2 1 -1 /
\plot 2 2 2 -1 /
\plot 4 2 4 -1 /
\plot 7 2 7 -1 /
\plot 9 2 9 -1 /
\plot 5 -1  6 2 /
\put{$\scriptstyle{i\,}$} at 5 3.85 
\put{$\scriptstyle{i\!+\!1}$} at 6 3.75 
\endpicture}, \quad 
\ell_i = {\beginpicture
\setcoordinatesystem units <0.35cm,0.15cm>        
\setplotarea x from 1 to 9, y from -1 to 4
\put{$\bullet$} at  1 -1  \put{$\bullet$} at  1 2
\put{$\bullet$} at  2 -1  \put{$\bullet$} at  2 2
\put{$\cdots$} at  3 -1  \put{$\cdots$} at  3 2
\put{$\bullet$} at  4 -1  \put{$\bullet$} at  4 2
\put{$\bullet$} at  5 -1  \put{$\bullet$} at  5 2
\put{$\bullet$} at  6 -1  \put{$\bullet$} at  6 2
\put{$\bullet$} at  7 -1  \put{$\bullet$} at  7 2
\put{$\cdots$} at  8 -1  \put{$\cdots$} at  8 2
\put{$\bullet$} at  9 -1  \put{$\bullet$} at  9 2
\plot 1 2 1 -1 /
\plot 2 2 2 -1 /
\plot 4 2 4 -1 /
\plot 7 2 7 -1 /
\plot 9 2 9 -1 /
\plot 6 -1  5 2 /
\put{$\scriptstyle{i}$\,} at 5 3.85 
\put{$\scriptstyle{i\!+\!1}$} at 6 3.75 
\endpicture}.
\end{equation}
It is well known that  the $t_i$'s generate the Temperley-Lieb algebra $\TL_k(x)$ (see for example \cite{GHJ}).  
For $1 \le i \le k$, let
\begin{equation}
p_i = {\beginpicture
\setcoordinatesystem units <0.35cm,0.15cm>        
\setplotarea x from 1 to 9, y from -1 to 4
\put{$\bullet$} at  1 -1  \put{$\bullet$} at  1 2
\put{$\bullet$} at  2 -1  \put{$\bullet$} at  2 2
\put{$\cdots$} at  3 -1  \put{$\cdots$} at  3 2
\put{$\bullet$} at  4 -1  \put{$\bullet$} at  4 2
\put{$\bullet$} at  5 -1  \put{$\bullet$} at  5 2
\put{$\bullet$} at  6 -1  \put{$\bullet$} at  6 2
\put{$\bullet$} at  7 -1  \put{$\bullet$} at  7 2
\put{$\cdots$} at  8 -1  \put{$\cdots$} at  8 2
\put{$\bullet$} at  9 -1  \put{$\bullet$} at  9 2
\plot 1 2 1 -1 /
\plot 2 2 2 -1 /
\plot 4 2 4 -1 /
\plot 6 2 6 -1 /
\plot 7 2 7 -1 /
\plot 9 2 9 -1 /
\put{$\scriptstyle{i}$} at 5 3.75  
\endpicture}.
\end{equation}
Diagram multiplication shows that
\begin{equation}\label{eq: projs}  p_1 = r_1 \ell_1, \quad 
p_i = r_i \ell_i = \ell_{i-1} r_{i-1}, \ \hbox{\rm  for} \ \ 1 < i  < k, \ \ \  \hbox{\rm and} \ 
\  p_k = \ell_{k-1} r_{k-1}.  \end{equation}
In this section, we prove that the diagrams $t_i, \ell_i, r_i$ ($1 \leq i < k$) generate $\M_k(x)$.

Let  $\RP_k \subseteq \M_k(x)$ denote the subalgebra with identity element
$\mathbf{1}_k$  spanned by the diagrams containing no horizontal edges and no vertical edges directed to the northwest;  and analogously, let  $\LP_k \subseteq \M_k(x)$ denote the subalgebra with $\mathbf{1}_k$  spanned by the diagrams containing no horizontal edges and no vertical edges directed 
to the northeast.    For example, 
$$
{\beginpicture
\setcoordinatesystem units <0.4cm,0.2cm>        
\setplotarea x from 1 to 7, y from -1.5 to 2.5
\put{$\bullet$} at  1 -1  \put{$\bullet$} at  1 2
\put{$\bullet$} at  2 -1  \put{$\bullet$} at  2 2
\put{$\bullet$} at  3 -1  \put{$\bullet$} at  3 2
\put{$\bullet$} at  4 -1  \put{$\bullet$} at  4 2
\put{$\bullet$} at  5 -1  \put{$\bullet$} at  5 2
\put{$\bullet$} at  6 -1  \put{$\bullet$} at  6 2
\put{$\bullet$} at  7 -1  \put{$\bullet$} at  7 2
\put{$\bullet$} at  8 -1  \put{$\bullet$} at  8 2
\put{$\bullet$} at  9 -1  \put{$\bullet$} at  9 2
\put{$\bullet$} at  10 -1  \put{$\bullet$} at  10 2
\plot 2 2  1  -1 /
\plot 4 2  2  -1 /
\plot 5 2  3  -1 /
\plot 7 2  4  -1 /
\plot 8 2 8 -1 / 
\plot 10 2 9 -1 /
\endpicture} \in \RP_{10} \quad\text{ and } \quad
{\beginpicture
\setcoordinatesystem units <0.4cm,0.2cm>        
\setplotarea x from 1 to 7, y from -1.5 to 2.5
\put{$\bullet$} at  1 -1  \put{$\bullet$} at  1 2
\put{$\bullet$} at  2 -1  \put{$\bullet$} at  2 2
\put{$\bullet$} at  3 -1  \put{$\bullet$} at  3 2
\put{$\bullet$} at  4 -1  \put{$\bullet$} at  4 2
\put{$\bullet$} at  5 -1  \put{$\bullet$} at  5 2
\put{$\bullet$} at  6 -1  \put{$\bullet$} at  6 2
\put{$\bullet$} at  7 -1  \put{$\bullet$} at  7 2
\put{$\bullet$} at  8 -1  \put{$\bullet$} at  8 2
\put{$\bullet$} at  9 -1  \put{$\bullet$} at  9 2
\put{$\bullet$} at  10 -1  \put{$\bullet$} at  10 2
\plot 1 2  1  -1 /
\plot 2 2  3  -1 /
\plot 3 2  5  -1 /
\plot 5 2  6  -1 /
\plot 6 2 8 -1 / 
\plot 7 2 9 -1 /
\endpicture} \in \LP_{10}.
$$

For $1 \le j < i  \le k$, set  
\begin{equation}\label{eq:rijdef}
r_{i,j} = r_{i-1} r_{i-2} \cdots r_j = 
{\beginpicture
\setcoordinatesystem units <0.35cm,0.2cm>         
\setplotarea x from 1 to 13, y from -1.5 to 2.5
\put{$\bullet$} at  1 -1  \put{$\bullet$} at  1 2
\put{$\bullet$} at  2 -1  \put{$\bullet$} at  2 2
\put{$\cdots$} at  3 -1  \put{$\cdots$} at  3 2
\put{$\bullet$} at  4 -1  \put{$\bullet$} at  4 2
\put{$\bullet$} at  5 -1  \put{$\bullet$} at  5 2
\put{$\bullet$} at  6 -1  \put{$\bullet$} at  6 2
\put{$\cdots$} at  7 -1  \put{$\cdots$} at  7 2
\put{$\bullet$} at  8 -1  \put{$\bullet$} at  8 2
\put{$\bullet$} at  9 -1  \put{$\bullet$} at  9 2
\put{$\bullet$} at  10 -1  \put{$\bullet$} at  10 2
\put{$\bullet$} at  11 -1  \put{$\bullet$} at  11 2
\put{$\cdots$} at  12 -1  \put{$\cdots$} at  12 2
\put{$\bullet$} at  13 -1  \put{$\bullet$} at  13 2
\plot 1 2  1  -1 /
\plot 2 2  2  -1 /
\plot 4 2  4  -1 /
\plot 5 -1  9  2 /
\plot 10 2  10  -1 /
\plot 10 2  10  -1 /
\plot 11 2  11  -1 /
\plot 13 2  13  -1 /
\put{$\scriptstyle{j}$} at 5 3.75
\put{$\scriptstyle{i}$} at 9 3.75
\endpicture}
\end{equation}
and let $r_{i,i} = \ell_{i,i} = \mathbf{1}_k$ for $1 \leq i \leq k$.   

Let $d$ be a diagram in $\RP_k$.    Thus, $d$ has no horizontal edges. 
Let ${ I} = {I}(d) = \{i_1 < i_2 < \dots < i_s\}$ denote the vertices on the
top row of $d$ which connect to vertices in the bottom row of $d$, and
let ${J} = {J}(d) = \{j_1 < j_2 < \dots < j_s\}$ denote the corresponding
vertices in the bottom row of $d$ so that $i_q$ and $j_q$
are connected by an edge for $q=1,\dots,s$. 
In order for $d$ to belong to $\RP_k$,  we must have $i_q  \ge j_q$ for each $q = 1,\dots, s$. It is easy to see (by multiplying diagrams) that
$$
d = \bigg(\prod_{i \not\in I} p_i\bigg)   r_{i_1,j_1} r_{i_2,j_2} \cdots r_{i_s,j_s} \bigg(\prod_{j \not\in J} p_j \bigg).
$$
This shows that 
$$\RP_k = \langle \mathbf{1}_k, r_1, \ldots, r_{k-1}, p_1, \ldots, p_k \rangle.$$

There is an involution  ``$\ast$''  (a $\KK$-linear anti-automorphism of order 2) on $\M_k(x)$ 
which  interchanges the vertices in the top and bottom rows  of a diagram while maintaining all the edge connections.   Thus, for Motzkin diagrams $d_1,d_2$,  the involution satisfies $(d_1 d_2)^\ast = d_2^\ast d_1^\ast$ and $(d^\ast)^\ast = d$.
It  has the following effect on
the elements in \eqref{eq:trlp}:

\begin{equation}\label{eq:invol} t_i^\ast = t_i, \quad  \ell_i^\ast = r_i, \quad  r_i^\ast= \ell_i  \quad (1 \leq i < k),
\qquad p_j^\ast = p_j  \quad (1 \leq j \leq k). \end{equation}
As a result,  the subalgebra $\LP_k$  is anti-isomorphic to $\RP_k$ and 
$$\LP_k  = \langle \mathbf{1}_k, \ell_1, \ldots, \ell_{k-1}, p_1, \ldots, p_k \rangle.$$
Similarly, the subalgebra $\LL_k$ 
with $\mathbf{1}_k$ generated by the $\ell_i$ ($1 \leq i < k$)  is anti-isomorphic to  $\RR_k$,
the subalgebra with $\mathbf{1}_k$ generated by the $r_i$ ($1 \leq i < k$).

Every diagram in $d\in \M_k(x)$ can be factored as
\begin{equation}\label{Factorization}
d = r t \ell, \qquad \hbox{with $r \in \RP_k$, \  $t \in \TL_k(x)$,  and \ $\ell \in \LP_k$. }
\end{equation}
For example,
\begin{equation}\label{FactorizationExample}
d =  {\beginpicture
\setcoordinatesystem units <0.4cm,0.2cm>        
\setplotarea x from 0 to 7, y from -1.5 to 2.5
\put{$\bullet$} at  0 -1  \put{$\bullet$} at  0 2
\put{$\bullet$} at  1 -1  \put{$\bullet$} at  1 2
\put{$\bullet$} at  2 -1  \put{$\bullet$} at  2 2
\put{$\bullet$} at  3 -1  \put{$\bullet$} at  3 2
\put{$\bullet$} at  4 -1  \put{$\bullet$} at  4 2
\put{$\bullet$} at  5 -1  \put{$\bullet$} at  5 2
\put{$\bullet$} at  6 -1  \put{$\bullet$} at  6 2
\put{$\bullet$} at  7 -1  \put{$\bullet$} at  7 2
\plot 1 2  0 -1 /
\plot 5 2 2 -1 /
\plot 6 2 7 -1  /
\setquadratic
\plot 2 2 3 1 4 2 /
\plot 3 -1 4.5 .25 6 -1 /
\plot 4 -1 4.5 -.25 5 -1 /
\endpicture}
=
 \begin{array}{l}
 {\beginpicture
\setcoordinatesystem units <0.4cm,0.2cm>        
\setplotarea x from 0 to 7, y from -1.5 to 2.5
\put{$\bullet$} at  0 -1  \put{$\bullet$} at  0 2
\put{$\bullet$} at  1 -1  \put{$\bullet$} at  1 2
\put{$\bullet$} at  2 -1  \put{$\bullet$} at  2 2
\put{$\bullet$} at  3 -1  \put{$\bullet$} at  3 2
\put{$\bullet$} at  4 -1  \put{$\bullet$} at  4 2
\put{$\bullet$} at  5 -1  \put{$\bullet$} at  5 2
\put{$\bullet$} at  6 -1  \put{$\bullet$} at  6 2
\put{$\bullet$} at  7 -1  \put{$\bullet$} at  7 2
\plot 1 2  0  -1 /
\plot 2 2  1  -1 /
\plot 4 2  2  -1 /
\plot 5 2  3  -1 /
\plot 6 2  4  -1 /
\endpicture} = r\\
 {\beginpicture
\setcoordinatesystem units <0.4cm,0.2cm>        
\setplotarea x from 0 to 7, y from -1.5 to 2.5
\put{$\bullet$} at  0 -1  \put{$\bullet$} at  0 2
\put{$\bullet$} at  1 -1  \put{$\bullet$} at  1 2
\put{$\bullet$} at  2 -1  \put{$\bullet$} at  2 2
\put{$\bullet$} at  3 -1  \put{$\bullet$} at  3 2
\put{$\bullet$} at  4 -1  \put{$\bullet$} at  4 2
\put{$\bullet$} at  5 -1  \put{$\bullet$} at  5 2
\put{$\bullet$} at  6 -1  \put{$\bullet$} at  6 2
\put{$\bullet$} at  7 -1  \put{$\bullet$} at  7 2
\plot 0 2  0 -1 /
\plot 3 2 1 -1 /
\plot 4 2 6 -1  /
\plot 7 2 7 -1 /
\setquadratic
\plot 1 2 1.5 1 2 2 /
\plot 5 2 5.5 1 6 2 /
\plot 2 -1 3.5 .25 5 -1 /
\plot 3 -1 3.5 -.25 4 -1 /
\endpicture} = t\\
 {\beginpicture
\setcoordinatesystem units <0.4cm,0.2cm>        
\setplotarea x from 0 to 7, y from -1.5 to 2.5
\put{$\bullet$} at  0 -1  \put{$\bullet$} at  0 2
\put{$\bullet$} at  1 -1  \put{$\bullet$} at  1 2
\put{$\bullet$} at  2 -1  \put{$\bullet$} at  2 2
\put{$\bullet$} at  3 -1  \put{$\bullet$} at  3 2
\put{$\bullet$} at  4 -1  \put{$\bullet$} at  4 2
\put{$\bullet$} at  5 -1  \put{$\bullet$} at  5 2
\put{$\bullet$} at  6 -1  \put{$\bullet$} at  6 2
\put{$\bullet$} at  7 -1  \put{$\bullet$} at  7 2
\plot 0 2  0 -1 /
\plot 1 2 2 -1 /
\plot 2 2 3 -1 /
\plot 3 2 4 -1 /
\plot 4 2 5 -1 /
\plot 5 2 6 -1 /
\plot 6 2 7 -1 /
\endpicture} = \ell
\end{array}
\end{equation}
This factorization can be done  in the following way.   To obtain $t$,  first shift the isolated vertices of $d$ to the 
right of the diagram, preserving connections on the other vertices, to produce the diagram $d_1$.  Second, fill in with enough non-nested
horizontal edges so that the top and bottom rows have the same number of horizontal edges to produce the diagram $d_2$. Finally,  finish off with identity edges. This produces the Temperley-Lieb diagram  $t \in \TL_k(x)$
that is associated with $d$. These steps are illustrated  below.
$$
\underbrace{\beginpicture
\setcoordinatesystem units <0.3cm,0.2cm>        
\setplotarea x from 0 to 7, y from -1.5 to 2.5
\put{$\bullet$} at  0 -1  \put{$\bullet$} at  0 2
\put{$\bullet$} at  1 -1  \put{$\bullet$} at  1 2
\put{$\bullet$} at  2 -1  \put{$\bullet$} at  2 2
\put{$\bullet$} at  3 -1  \put{$\bullet$} at  3 2
\put{$\bullet$} at  4 -1  \put{$\bullet$} at  4 2
\put{$\bullet$} at  5 -1  \put{$\bullet$} at  5 2
\put{$\bullet$} at  6 -1  \put{$\bullet$} at  6 2
\put{$\bullet$} at  7 -1  \put{$\bullet$} at  7 2
\plot 1 2  0 -1 /
\plot 5 2 2 -1 /
\plot 6 2 7 -1  /
\setquadratic
\plot 2 2 3 1 4 2 /
\plot 3 -1 4.5 .25 6 -1 /
\plot 4 -1 4.5 -.25 5 -1 /
\endpicture}_{d}
\mapsto
\underbrace{\beginpicture
\setcoordinatesystem units <0.3cm,0.2cm>        
\setplotarea x from 0 to 7, y from -1.5 to 2.5
\put{$\bullet$} at  0 -1  \put{$\bullet$} at  0 2
\put{$\bullet$} at  1 -1  \put{$\bullet$} at  1 2
\put{$\bullet$} at  2 -1  \put{$\bullet$} at  2 2
\put{$\bullet$} at  3 -1  \put{$\bullet$} at  3 2
\put{$\bullet$} at  4 -1  \put{$\bullet$} at  4 2
\put{$\bullet$} at  5 -1  \put{$\bullet$} at  5 2
\put{$\bullet$} at  6 -1  \put{$\bullet$} at  6 2
\put{$\bullet$} at  7 -1  \put{$\bullet$} at  7 2
\plot 0 2  0 -1 /
\plot 3 2 1 -1 /
\plot 4 2 6 -1  /
\setquadratic
\plot 1 2 1.5 1 2 2 /
\plot 2 -1 3.5 .25 5 -1 /
\plot 3 -1 3.5 -.25 4 -1 /
\endpicture}_{d_1}
\mapsto
\underbrace {\beginpicture
\setcoordinatesystem units <0.3cm,0.2cm>        
\setplotarea x from 0 to 7, y from -1.5 to 2.5
\put{$\bullet$} at  0 -1  \put{$\bullet$} at  0 2
\put{$\bullet$} at  1 -1  \put{$\bullet$} at  1 2
\put{$\bullet$} at  2 -1  \put{$\bullet$} at  2 2
\put{$\bullet$} at  3 -1  \put{$\bullet$} at  3 2
\put{$\bullet$} at  4 -1  \put{$\bullet$} at  4 2
\put{$\bullet$} at  5 -1  \put{$\bullet$} at  5 2
\put{$\bullet$} at  6 -1  \put{$\bullet$} at  6 2
\put{$\bullet$} at  7 -1  \put{$\bullet$} at  7 2
\plot 0 2  0 -1 /
\plot 3 2 1 -1 /
\plot 4 2 6 -1  /
\setquadratic
\plot 1 2 1.5 1 2 2 /
\plot 5 2 5.5 1 6 2 /
\plot 2 -1 3.5 .25 5 -1 /
\plot 3 -1 3.5 -.25 4 -1 /
\endpicture}_{d_2}
\mapsto
\underbrace{\beginpicture
\setcoordinatesystem units <0.3cm,0.2cm>        
\setplotarea x from 0 to 7, y from -1.5 to 2.5
\put{$\bullet$} at  0 -1  \put{$\bullet$} at  0 2
\put{$\bullet$} at  1 -1  \put{$\bullet$} at  1 2
\put{$\bullet$} at  2 -1  \put{$\bullet$} at  2 2
\put{$\bullet$} at  3 -1  \put{$\bullet$} at  3 2
\put{$\bullet$} at  4 -1  \put{$\bullet$} at  4 2
\put{$\bullet$} at  5 -1  \put{$\bullet$} at  5 2
\put{$\bullet$} at  6 -1  \put{$\bullet$} at  6 2
\put{$\bullet$} at  7 -1  \put{$\bullet$} at  7 2
\plot 0 2  0 -1 /
\plot 3 2 1 -1 /
\plot 4 2 6 -1  /
\plot 7 2 7 -1 /
\setquadratic
\plot 1 2 1.5 1 2 2 /
\plot 5 2 5.5 1 6 2 /
\plot 2 -1 3.5 .25 5 -1 /
\plot 3 -1 3.5 -.25 4 -1 /
\endpicture}_t.
$$
Now, $r \in \RP_k$ and $\ell \in \LP_k$ are the unique diagrams that move the
edges to their appropriate positions as seen in \eqref{FactorizationExample}.
This algorithm proves that
\begin{equation}
\M_k(x) = \RP_k \TL_k(x)\,  \LP_k.
\end{equation} 
 Thus, we have established the following result.

\begin{prop} The Motzkin algebra $\M_k(x)$ is generated by $\mathbf{1}_k$ and the diagrams $t_i, \ell_i, r_i$ for $1 \le i  < k$.
\end{prop}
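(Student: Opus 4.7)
The plan is simply to assemble three ingredients that have already been put in place in the preceding discussion: the factorization $\M_k(x) = \RP_k\,\TL_k(x)\,\LP_k$, the explicit generating sets for the two planar rook subalgebras, and the classical fact that $\TL_k(x)$ is generated by $t_1,\dots,t_{k-1}$. Let $\mathcal{A}$ denote the unital subalgebra of $\M_k(x)$ generated by $\{t_i, \ell_i, r_i : 1 \le i < k\}$. To prove the proposition, it suffices to show that every Motzkin $k$-diagram $d$ lies in $\mathcal{A}$.

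First, by the factorization algorithm illustrated in \eqref{FactorizationExample} (shifting the isolated vertices to the right, padding with non-nested horizontal edges, and reattaching via the unique diagrams in $\RP_k$ and $\LP_k$), we may write $d = r t \ell$ with $r \in \RP_k$, $t \in \TL_k(x)$, and $\ell \in \LP_k$. It is classical that $t \in \langle t_1,\dots,t_{k-1}\rangle \subseteq \mathcal{A}$. From the generating sets $\RP_k = \langle \mathbf{1}_k, r_1,\dots,r_{k-1}, p_1,\dots,p_k\rangle$ and $\LP_k = \langle \mathbf{1}_k, \ell_1,\dots,\ell_{k-1}, p_1,\dots,p_k\rangle$ established earlier in this subsection (using the decomposition $d = (\prod_{i \notin I} p_i)\, r_{i_1,j_1}\cdots r_{i_s,j_s}\,(\prod_{j \notin J} p_j)$ and its image under the involution ``$\ast$''), we see that $r$ and $\ell$ lie in the subalgebra generated by $\{\mathbf{1}_k, r_1,\dots,r_{k-1}, \ell_1,\dots,\ell_{k-1}, p_1,\dots,p_k\}$.

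Finally, to eliminate the projector generators $p_j$, I would invoke the identities \eqref{eq: projs}, which express each $p_j$ as either $r_i\ell_i$ or $\ell_{i-1}r_{i-1}$ for appropriate $i$. Substituting these expressions shows that every $p_j$ already belongs to $\mathcal{A}$, and hence so do $r$ and $\ell$. Therefore $d = r t \ell \in \mathcal{A}$, completing the proof.

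Since all of the real combinatorial content has already been absorbed into the construction of the factorization $\M_k(x) = \RP_k\,\TL_k(x)\,\LP_k$ and into the explicit writing of arbitrary $\RP_k$-diagrams as products of $r_{i,j}$'s and $p_i$'s, there is no substantive obstacle remaining; the proof is essentially a bookkeeping assembly of previously proved facts together with the three relations in \eqref{eq: projs}.
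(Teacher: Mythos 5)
Your proof is correct and takes essentially the same approach as the paper: the paper also establishes the factorization $\M_k(x) = \RP_k\,\TL_k(x)\,\LP_k$, the generating sets for $\RP_k$ and $\LP_k$, and then relies implicitly on \eqref{eq: projs} to eliminate the $p_j$'s. You have merely made the final bookkeeping step explicit.
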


\begin{remark} {\rm These diagrams form a natural set of generators which obey nice relations, and 
a presentation for $\M_k(x)$ using these generators is given in \cite[Thm.~4.1]{HLP}.

The elements $\ell_i, r_i \ (1 \leq i < k)$  and  $p_j \ (1 \leq j \leq k)$
generate a subalgebra isomorphic  to  the planar rook
algebra (see \cite{He}, \cite{FHH}), which has irreducible modules with dimensions given by the binomial
coefficients ${k \choose n}$,  for $n=0,1,\dots, k$.   
The subalgebra $\RR_k$  
generated by the elements $r_i$ \ ($1 \leq i < k$)  (hence also the subalgebra $\LL_k$ generated by the $\ell_i$)  has dimension equal to  the $(k+1)$st term  in the  sequence $1,1,1,3,10,31,98, \cdots$ (sequence \#A114487 in \cite{Sl}).    This sequence   
has appeared in the work \cite{STT},   where the $n$th term in the sequence is shown to be the number of lattice paths (Dyck paths) from $(0,0)$ to $(2n,0)$ with
steps $u=(1,1)$ or $d=(1,-1)$, never falling below the $x$-axis and avoiding
the pattern $uudd$.   To keep the paper a reasonable length, we do not
include the computations of the dimensions of $\RP_k$ and $\RR_k$,  as
they are not needed for any results here, and they have appeared in an earlier preprint of ours.}   
\end{remark}

\end{subsection}

 \begin{subsection}{Motzkin paths and 1-factors}  \label{subsec:MotzkinPaths}
 
 A \emph{Motzkin path} of length $k$ is a sequence $p = (a_1,  \ldots, a_k)$ with $a_i \in \{-1, 0,1\}$ such that  $a_1 + \cdots + a_s \ge 0$ for all $1 \le s \le k$.  Define,
  $
\rank\left((a_1,  \ldots, a_k)\right) = a_1 + \cdots + a_k,
 $
 so for example, $$
p = (1,1,1,-1,1,-1,-1,0,1,-1,1,0,1,1,0,1,-1,-1,0,1)
$$
is a Motzkin path of  length 20 and rank 4.  
Let $\mathcal{P}_k$ denote the set of Motzkin paths of length $k$,
and 
\begin{equation} \label{eq:rankrpaths}  \cP_k^r  = \{ p \in \cP_k \mid \rank(p) = r \} \ \ 
\hbox{\rm for} \ \ r =0,1\dots, k. \end{equation}

  It follows from \cite[Exercise 6.38\,(d), p.~238]{St}  that the Motzkin 
number $\Mk$ is the number of  lattice paths from (0,0) to $(k,0)$ with steps $(1,-1), (1,0), (1,1)$, 
never going below the $x$-axis.   These correspond precisely to our Motzkin paths  
in $\mathcal P_k^0$,  as can be readily seen by taking the second coordinates 
of the steps.   
More generally,  the Motzkin paths in $\cP_k^r$  are in bijection with
lattice paths from (0,0) to $(k,r)$ with steps $(1,-1), (1,0), (1,1)$, 
never going below the $x$-axis.  Assume 
$\mathsf {m}_{k,r}$ counts those paths so that $| \cP_k^r | = \mathsf {m}_{k,r}$.  
These paths are also in bijection
with the walks with $k$ steps from $\{-1,0,1\}$ on $\mathbb{N}$ 
starting at 0 and ending at $r$.  

We claim that  
\begin{equation}\label{eq:sumofsqs}   \sum_{r=0}^k \mathsf{m}_{k,r}^2 =\Mtwok. \end{equation} This follows from the simple observation that $\Mtwok$ counts the number of Motzkin paths of length $2k$ and rank $0$,
hence also the number of lattice paths from $(0,0)$ to $(2k,0)$ with steps $(1,0), (1,1), (1,-1)$.
The first $k$ steps in that path form a lattice path from $(0,0)$ to $(k,r)$ for some $r\in \{0,1,\dots, k\}$ and hence correspond to a Motzkin path of length $k$ and rank $r$.  The last
$k$ steps in the path form a lattice path from $(2k,0)$ to $(k,r)$ when it is read in reverse order and with a change of signs,
hence also  correspond to a Motzkin path of length $k$ and rank $r$.   Equation \eqref{eq:sumofsqs}
follows.

For each index $i$ with $a_i = 1$ in the Motzkin path $p = (a_1, \ldots, a_k) \in \mathcal P_k$, let $j$ be the smallest index (if it exists)  such that $i < j \le k$ and  $a_i + a_{i+1} + \cdots + a_j = 0$.   Then $(a_i,a_j) = (1,-1)$ are said to be \emph{paired} in $p$; otherwise $a_i$ is said to be unpaired in $p$.   
For example, in the Motzkin path above, if we connect paired indices by an edge, and we associate unpaired 1s with $\redbull$, we have
$$
p = {\beginpicture
 \setcoordinatesystem units <0.55cm,0.2cm>        
\setplotarea x from .5 to 20.5, y from -1 to 2.5
\setquadratic
\plot 3 .5      3.5  -1       4 .5 /
\plot 5 .5      5.5  -1       6 .5 /
\plot 9 .5      9.5  -1       10 .5 /
\plot 16 .5     16.5  -1       17 .5 /
\plot 2 .5      4.5  -2       7 .5 /
\plot 14 .5      16  -2       18 .5 /
\put{$\redbull $} at 1 .5  \put{${1}$} at 1 2
\put{$\bullet$} at 2 .5 \put{${1}$} at 2 2
\put{$\bullet$} at 3 .5 \put{${1}$} at 3 2
\put{$\bullet$} at 4 .5 \put{${-1}$} at 4 2
\put{$\bullet$} at 5 .5 \put{${1}$} at 5 2
\put{$\bullet$} at 6 .5 \put{${-1}$} at 6 2
\put{$\bullet$} at 7 .5 \put{${-1}$} at 7 2
\put{$\bullet$} at 8 .5  \put{${0}$} at 8 2
\put{$\bullet$} at 9 .5 \put{${1}$} at 9 2
\put{$\bullet$} at 10 .5 \put{${-1}$} at 10 2
\put{$\redbull $} at 11 .5 \put{${1}$} at 11 2
\put{$\bullet$} at 12 .5 \put{${0}$} at 12 2
\put{$\redbull$} at 13 .5 \put{${1}$} at 13 2
\put{$\bullet$} at 14 .5 \put{${1}$} at 14 2
\put{$\bullet$} at 15 .5 \put{${0}$} at 15 2
\put{$\bullet$} at 16 .5 \put{${1}$} at 16 2
\put{$\bullet$} at 17 .5 \put{${-1}$} at 17 2
\put{$\bullet$} at 18 .5 \put{${-1}$} at 18 2
\put{$\bullet$} at 19 .5 \put{${0}$} at 19 2
\put{$\redbull$} at 20 .5 \put{${1}$} at 20 2
\endpicture}
$$
Then, since paired vertices cancel one another in the sum,
$$
\rank(p)  = \hbox{ the number of white vertices in $p$.} 
$$

Define a Motzkin \emph{1-factor} to be a diagram on  a single row of $k$ vertices, colored white or black, such that 
\begin{enumerate}
\item black vertices are allowed to be connected  by non-crossing horizontal edges drawn below the diagram, and
\item white vertices are not allowed between two vertices that are connected by an edge.
\end{enumerate}
The pairing procedure above provides a map from Motzkin paths to Motzkin 1-factors.  This process never allows white vertices between paired vertices or edge crossings.   Furthermore, it is easy to find the labeling on a Motzkin 1-factor that produces its path $p$,  so the procedure is reversible, and we have a bijection between paths and 1-factors.  Therefore, in what follows, we  use the terms (Motzkin) path and 1-factor interchangeably.

If $p$ and $q$ are Motzkin paths each of length $k$ and rank $r$, define a Motzkin diagram $d_p^q \in \M_k(x)$ such that
the horizontal edges in the bottom row of $d_p^q$ are from $p$ (but are reflected to be above the vertices), the horizontal edges in the top row of $d_p^q$ are from $q$, and the 
vertical edges in $d_p^q$ connect the $r$ white vertices in $p$ to the $r$ white vertices in $q$ in the one, and only one,  planar way.  For example,  
\begin{equation}\label{eigenfunction}
\begin{array}{rcl}
q &=& { \beginpicture
 \setcoordinatesystem units <0.5cm,0.2cm>        
\setplotarea x from .5 to 17.5, y from -2 to 2.5
\setquadratic
\plot 1 .5      1.5  -1      2 .5 /
\plot 6 .5      6.5  -1      7 .5 /
\plot 11 .5      11.5 -.75       12 .5 /
\plot 16 .5     16.5  -1      17  .5 /
\plot 9 .5 11.5 -1.5 14 .5 / 
\put{$\bullet$} at 1 .5
\put{$\bullet$} at 2 .5
\put{$\redbull$} at 3 .5
\put{$\bullet$} at 4 .5
\put{$\redbull$} at 5 .5
\put{$\bullet$} at 6 .5
\put{$\bullet$} at 7 .5
\put{$\redbull $} at 8 .5
\put{$\bullet$} at 9 .5
\put{$\bullet$} at 10 .5
\put{$\bullet$} at 11 .5
\put{$\bullet$} at 12 .5
\put{$\bullet$} at 13 .5
\put{$\bullet$} at 14 .5
\put{$\redbull$} at 15 .5
\put{$\bullet$} at 16 .5
\put{$\bullet$} at 17 .5
\endpicture}  \\
p &=&  { \beginpicture
 \setcoordinatesystem units <0.5cm,0.2cm>        
\setplotarea x from .5 to 17.5, y from -5 to 3
\setquadratic
\plot 3 .5      4  -1       5 .5 /
\plot 9 .5      9.5  -1      10 .5 /
\plot 8 .5      10  -2       12 .5 /
\plot 6 .5     9.5  -3      13  .5 /
\plot 15 .5      15.5  -1       16 .5 /
\put{$\redbull $} at 1 .5
\put{$\redbull $} at 2 .5
\put{$\bullet$} at 3 .5
\put{$\bullet$} at 4 .5
\put{$\bullet$} at 5 .5
\put{$\bullet$} at 6 .5
\put{$\bullet$} at 7 .5
\put{$\bullet$} at 8 .5
\put{$\bullet$} at 9 .5
\put{$\bullet$} at 10 .5
\put{$\bullet$} at 11 .5
\put{$\bullet$} at 12 .5
\put{$\bullet$} at 13 .5
\put{$\redbull $} at 14 .5
\put{$\bullet$} at 15 .5
\put{$\bullet$} at 16 .5
\put{$\redbull $} at 17 .5
\endpicture} \\
d_p^q &=& 
{ \beginpicture
 \setcoordinatesystem units <0.5cm,0.35cm>        
\setplotarea x from .5 to 17.5, y from -1 to 1
\put{$\bullet$} at 1 2  \put{$\bullet$} at 2 2 \put{$\bullet$} at 3 2  \put{$\bullet$} at 4 2 
\put{$\bullet$} at 5 2  \put{$\bullet$} at 6 2  \put{$\bullet$} at 7 2  \put{$\bullet$} at 8 2 
\put{$\bullet$} at 9 2  \put{$\bullet$} at 10 2  \put{$\bullet$} at 11 2  \put{$\bullet$} at 12 2 
\put{$\bullet$} at 13 2 
\put{$\bullet$} at 14 2 
\put{$\bullet$} at 15 2 
\put{$\bullet$} at 16 2 
\put{$\bullet$} at 17 2 
\put{$\bullet$} at 1 -1 
\put{$\bullet$} at 2 -1
\put{$\bullet$} at 3 -1 
\put{$\bullet$} at 4 -1 
\put{$\bullet$} at 5 -1 
\put{$\bullet$} at 6 -1 
\put{$\bullet$} at 7 -1 
\put{$\bullet$} at 8 -1 
\put{$\bullet$} at 9 -1 
\put{$\bullet$} at 10 -1 
\put{$\bullet$} at 11 -1 
\put{$\bullet$} at 12 -1 
\put{$\bullet$} at 13 -1 
\put{$\bullet$} at 14 -1 
\put{$\bullet$} at 15 -1 
\put{$\bullet$} at 16 -1 
\put{$\bullet$} at 17 -1 
\plot 3 2 1 -1 /
\plot 5 2 2 -1 /
\plot 15 2 17 -1 /
\plot 8 2 14 -1 /
\setquadratic
\plot 1 2 1.5 1.25 2 2 /
\plot 6 2 6.5 1.25 7 2 /
\plot 9 2 11.5 1 14 2 /
\plot 11 2 11.5 1.5 12 2 / 
\plot 16 2 16.5 1.25 17 2 /
\plot 3 -1      4  0       5 -1 /
\plot 9 -1      9.5  -.5     10 -1 /
\plot 8 -1      10  0       12 -1 /
\plot 6 -1     9.5  .5      13  -1 /
\plot 15 -1      15.5  0       16 -1 / 
\endpicture}
\end{array}
\end{equation}
In this way each Motzkin $k$-diagram of rank $r$ is uniquely identified with a pair $(p,q)$ of Motzkin 1-factors of rank $r$, and this can be used to give a second realization of  identity \eqref{eq:sumofsqs}.

In Section \ref{sec:quantumgroup}, we use Motzkin paths to count the multiplicity of irreducible quantum $\fsl_2$-modules in tensor space;  in Section \ref{subsec:decomp} we use Motzkin paths to explicitly decompose tensor space into irreducible  $\mathsf{U}_\qq(\fsl_{\mathsf{2}})$-modules;  and in Section \ref{subsec: motzact}, we use Motzkin paths as a basis for the cell modules of $\M_k(x)$.

\end{subsection}

\begin{subsection}{Basic construction}
\label{Sec:BasicConstruction}  

Assume now that $x \neq 0$.   The  natural embedding of $\M_k(x) \subseteq \M_{k+1}(x)$, 
given by adding to a Motzkin diagram in $\M_k(x)$  a vertical edge connecting the $(k+1)$st vertex in each row, allows one to study 
\begin{equation}\label{eq:tower}
\M_0(x) \subseteq \M_1(x) \subseteq \M_2(x) \subseteq \cdots 
\end{equation}
collectively as a tower of algebras using the ``Jones basic construction" (see \cite{GHJ} or \cite{HR}) and the methods of recollement (see \cite{CPS}).   Define the idempotent $e_k = \frac{1}{x} t_{k-1} \in \M_k(x)$.  Then, for $k \ge 2$, there is an algebra isomorphism 
\begin{equation}\label{eq:basicconstructionisomorphism}
\M_{k-2}(x) \xrightarrow{\cong}  e_k \M_k(x) e_k
\end{equation}
given by sending a diagram $d \in \M_{k-2}(x) \subseteq \M_k(x)$ to $e_k d e_k$.  This is easily confirmed by considering diagram multiplication.  Note that under this map $\mathbf{1}_{k-2} \mapsto e_k$,  which is the unit in $e_k \M_k(x) e_k$.

The algebras $\M_k(x) e_k \M_k(x)$ and $e_k \M_k(x) e_k$ are full centralizers of each other on $\M_k(x) e_k$ under left and right multiplication, respectively.  Therefore, by double centralizer theory, the irreducible modules for $\M_k(x) e_k \M_k(x)$ and $e_k \M_k(x)  e_k$ are in bijection.   Thus, $\M_k(x) e_k \M_k(x)$ and $\M_{k-2}(x)$ have the same irreducible modules, and they are labeled by the integers $0, 1, 2, \ldots, k-2$,  as we shall see in Section 4. The ideal $\M_k(x) e_k \M_k(x)$ is referred to as the ``basic construction" for $\M_k(x)$ determined by the idempotent $e_k$.  

It is easy to see by multiplying diagrams that $\M_k(x) e_k \M_k(x) = \J_{k-2}$, the ideal defined in Section \ref{subsec:motzkinalgebra} as the span of Motzkin diagrams of rank $\le$ $k-2$.  Then $\M_k(x)$ is spanned modulo $\mathsf{J}_{k-2}$ by the unique diagram of rank $k$ (the identity element $\mathbf{1}_k$) and the diagrams $d_q^p$ of rank $k-1$ where $p,q \in \cP_{k}^{k-1}$.      Now   
\begin{equation}\label{eq:mat-k-1}d_q^p  d_t^s \equiv  \delta_{q,s}  d_t^p \mod \J_{k-2},\end{equation}   
as the rank goes down when $q \neq s$; 
from this we see that the cosets $\{d_q^p + \mathsf{J}_{k-2} \mid p,q \in  \cP_{k}^{k-1}\}$
form a basis of a $k \times k$ matrix algebra.    Thus, when $\KK$ is a field,  \begin{equation}\label{eq:basicconstructionsemisimple}
\M_k(x)/ \M_k(x) e_k \M_k(x) = \M_k(x)/\J_{k-2} \cong \KK \mathbf{1}_k \oplus \mathsf{Mat}_k(\KK) \quad \text{is semisimple.} 
\end{equation} 

The basic construction 
 tells us that when $x \neq 0$, the irreducible modules of 
$\M_k(x)$ are in bijection with the union of the irreducible modules for  $\KK \mathbf{1}_k, \mathsf{Mat}_k(\KK)$, and $\M_{k-2}(x)$.  This fact is explicitly realized when we construct the cell modules for $\M_k(x)$ in Section \ref{sec:Cellularity}.

\end{subsection}

\end{section}

\begin{section}{Schur-Weyl duality}

\begin{subsection}{Quantum $\fsl_2$}\label{sec:quantumgroup}

 {\it We assume throughout this section that $\mathbb{K}$ is a field, and $\qq \in \mathbb{K}$ is nonzero and not a root of unity}. 
The Lie algebra $\fsl_2$ of  $2 \times 2$ matrices
over a field $\KK$ has standard basis elements  
$$
e = \begin{pmatrix} 0 & 1 \\ 0 & 0 \end{pmatrix}, \qquad
f = \begin{pmatrix} 0 & 0 \\ 1 & 0 \end{pmatrix}, \qquad
h_1 = \begin{pmatrix} 1 & 0 \\ 0 & 0 \end{pmatrix}, \qquad 
h_2 = \begin{pmatrix} 0 & 0 \\ 0 & 1 \end{pmatrix},
$$ 
which satisfy the relations  
\begin{equation} \label{eq:gl2def} [h_1,h_2] = 0, \ [h_1,e] = e, \ [h_1,f] = -f, \ [h_2,e] = -e,\ [h_2,f] = f, \  \hbox{\rm and} \ 
[e,f] = h,\end{equation}  where $[a,b] = ab-ba$ and $h = h_1-h_2$. 
Its universal enveloping algebra $\mathsf{U}(\fsl_{\mathsf{2}})$ is the unital associative $\KK$-algebra  generated by $e,f,h_1,h_2$ with the defining relations  in \eqref{eq:gl2def}.
The quantum enveloping algebra $\uqsl$ is the unital associative algebra generated by $E,F,K_i^{\pm 1}$, $i=1,2$,  subject to the relations

\begin{equation}
\begin{array}{ll}
K_1K_2 = K_2 K_1  &  K_iK_i^{-1} = K_i^{-1} K_i = 1,  \ \ i=1,2,  \\
K_1EK_1^{-1} = \qq E,  & 
K_2 E K_2^{-1} = \qq^{-1} E \\  K_1FK_1^{-1} = \qq^{-1} F, &  K_2 F K_2^{-1} = \qq F \\
EF-FE = \displaystyle{\frac{K - K^{-1}}{\qq - \qq^{-1}}}, &  \hbox{\rm where} \ \ K = K_1K_2^{-1}.
\end{array}
\end{equation} 
In the classical limit $\qq \to 1$,  the algebra $\uqsl$ specializes to $\mathsf{U}(\fsl_{\mathsf{2}})$. Furthermore,  $\uqsl$ is a non-cocommutative Hopf algebra with coproduct given by
\begin{equation}\label{coproduct}
\begin{array}{l}
\Delta(E) = E \otimes K + 1 \otimes E, \\
\Delta(F) = F \otimes 1 + K^{-1} \otimes F,
\end{array} \qquad
\Delta(K_i) = K_i \otimes K_i,  \ \ i=1,2,
\end{equation}
and counit $\mathsf{u}$ given by 
\begin{equation}
\mathsf{u}(E) =\mathsf{u}(F) = 0, \qquad \mathsf{u}(K_i) = 1, \  \ \ i = 1,2.
\end{equation}
The subalgebra of $\uqsl$ generated by $E,F,K^{\pm 1}$ is the
quantum enveloping algebra $\mathsf{U}_\qq(\mathfrak{sl}_{\mathsf{2}})$ corresponding
the Lie algebra $\mathfrak{sl}_2$ of traceless matrices in $\fsl_2$. 

For each $r=0,1,\dots$, the algebra $\uqsl$ has an irreducible module $\VV(r)$  of dimension $r+1$ generated by a highest weight vector $v_r$ such that $Ev_r = 0$, 
$K_1 v_r = \qq^{r}v_r$, and $K_2v_r =  v_r$.    In particular, 
the 1-dimensional irreducible $\uqsl$-module   $\VV(0) =  \mathsf{span}_\KK\{ v_0 \}$ has  $\uqsl$-action given by the counit,
\begin{equation}
E v_0 = 0, \qquad F v_0 = 0, \qquad K_i v_0 = v_0,   \ \ \ i =1,2,
\end{equation}
and the 2-dimensional irreducible $\uqsl$-module $\VV(1)$ has action specified by 
\begin{equation} \label{QuantumGroupAction}
\begin{array}{cccc}
 E v_1 = 0, & F v_1 = v_{-1}, & K_1  v_1 = \qq  v_1,  & K_2 v_1 = v_1,   \\
E v_{-1} = v_1, & F v_{-1} = 0, & K_1 v_{-1} = v_{-1}, & K_2 v_{-1} = 
\qq v_{-1},
\end{array}
\end{equation}
so that the matrices of the representing transformations on $\VV(1)$ relative to the
basis $\{v_1, v_{-1}\}$  are given by 
\begin{equation}
E \rightarrow  \begin{pmatrix} 0 & 1 \\ 0 & 0 \end{pmatrix}, \quad
F \rightarrow  \begin{pmatrix} 0 & 0 \\ 1 & 0 \end{pmatrix} \quad
K_1 \rightarrow  \begin{pmatrix} \qq & 0 \\ 0 & 1 \end{pmatrix} \quad
K_2 \rightarrow  \begin{pmatrix} 1 & 0 \\ 0 & \qq \end{pmatrix}.
\end{equation}

Set 
\begin{equation}
\VV = \VV(0) \oplus \VV(1) = \mathsf{span}_\KK \left\{ v_{-1}, v_0, v_{1} \right\}. 
\end{equation}
Then the $k$-fold tensor product space $\VV^{\otimes k}$ has dimension $3^k$ and  a basis of simple tensors,
\begin{equation}
\VV^{\otimes k} =  \mathsf{span}_\KK\left\{\ v_{i_1} \otimes v_{i_2} \otimes \cdots \otimes v_{i_k} \ \big\vert \  {i_j} \in \{ -1, 0, 1\}
\  \hbox{\rm for all} \ j \right\}.
\end{equation} The coproduct \eqref{coproduct} affords an action of $\uqsl$ on $\VV^{\otimes k}$.

Under the above assumptions on
$\KK$ and $\qq$,  finite-dimensional modules for $\uqsl$ are completely
reducible.  In particular, tensor products
decompose according to the Clebsch-Gordan formulas \begin{align}
\VV(r) \otimes \VV(0)  &=  \VV(r), \\
\VV(r) \otimes \VV(1)  &=  \VV(r-1) \oplus \VV(r+1) \label{eq:CG2},
\end{align}
and thus,
\begin{equation}\label{eq:TensorRule}
\VV(r) \otimes \VV  =  \VV(r-1) \oplus \VV(r) \oplus \VV(r+1), 
\end{equation}
where $\VV(-1) = 0$. 

 Iterating \eqref{eq:TensorRule} creates the tower shown in Figure 1, which displays the decomposition of $\VV^{\otimes k}$ into irreducible $\uqsl$-modules (where $(r)$
stands for $\VV(r)$).  By induction,  the multiplicity of $\VV(r)$ in $\VV^{\otimes k}$ is equal to the number of paths of length $k$ from the top of the diagram to $(r)$.  These multiplicities are shown as the subscripts.   We record the steps in such a path,  by writing  $-1$ for a northeast-to-southwest edge, 0 for a vertical edge, and 1 for a northwest-to-southeast edge.  
Thus a path from $(0)$ at the top to $(r)$ at the $k$th level  corresponds to a walk on
$\NN$ from $0$ to $r$ with $k$ steps of $-1,0$, or $1$. These are exactly the Motzkin paths of length $k$ and rank $r$ from  
Section \ref{subsec:MotzkinPaths}.  Then  
\begin{equation}
\VV^{\otimes k} \cong \bigoplus_{r = 0}^{k}  \mathsf{m}_{k,r}  \VV(r), \qquad \hbox{ as a  $\uqsl$-module,}
\end{equation}

\begin{figure}
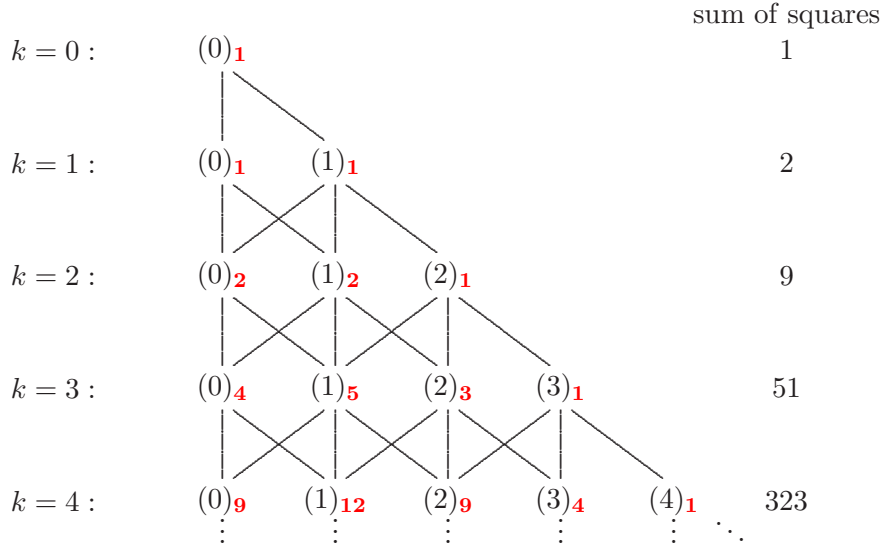
\label{MotzkinBratteli}
$$
 {\beginpicture
\setcoordinatesystem units <0.15cm,0.15cm>        
\setplotarea x from 0 to 40, y from 0 to 40
\put{$k=0:$} at -15 40   \put{$k=1:$} at -15 30  \put{$k=2:$} at -15 20
\put{$k=3:$} at -15 10  \put{$k=4:$} at -15 0
\put{$(0)_{\color{red}{\bf 9}\color{black}}$} at 0 0 
\put{$(1)_{\color{red}{\bf 12}\color{black}}$} at 10 0 
\put{$(2)_{\color{red}{\bf 9}\color{black}}$} at 20 0
\put{$(3)_{\color{red}{\bf 4}\color{black}}$} at 30 0
\put{$(4)_{\color{red}{\bf 1}\color{black}}$} at 40 0
\put{$(0)_{\color{red}{\bf 4}\color{black}}$} at 0 10 
\put{$(1)_{\color{red}{\bf 5}\color{black}}$} at 10 10 
\put{$(2)_{\color{red}{\bf 3}\color{black}}$} at 20 10
\put{$(3)_{\color{red}{\bf 1}\color{black}}$} at 30 10
\put{$(0)_{\color{red}{\bf 2}\color{black}}$} at 0 20 
\put{$(1)_{\color{red}{\bf 2}\color{black}}$} at 10 20 
\put{$(2)_{\color{red}{\bf 1}\color{black}}$} at 20 20
\put{$(0)_{\color{red}{\bf 1}\color{black}}$} at 0 30 
\put{$(1)_{\color{red}{\bf 1}\color{black}}$} at 10 30 
\put{$(0)_{\color{red}{\bf 1}\color{black}}$} at 0 40 
\plot 0 38  0 32 /
\plot 1 38  9 32 /
\plot 0 28  0 22 /
\plot 1 28  9 22 /
\plot 9 28 1 22 / 
\plot 11 28 19 22 /
\plot 10 22 10 28 /
\plot 0 18  0 12 /
\plot 1 18  9 12 /
\plot 9 18 1 12 / 
\plot 11 18 19 12 /
\plot 10 12 10 18 /
\plot 19 18 11 12 /
\plot 20 18 20 12 /
\plot 21 18 29 12 /
\plot 0 8  0 2 /
\plot 1 8  9 2 /
\plot 9 8 1 2 / 
\plot 11 8 19 2 /
\plot 10 2 10 8 /
\plot 19 8 11 2 /
\plot 20 8 20 2 /
\plot 21 8 29 2 /
\plot 29 8 21 2 /
\plot 30 8 30 2 /
\plot 31 8 39 2 /
\put{1} at 50 40
\put{2} at 50 30 
\put{9} at 50 20
\put{51} at 50 10
\put{323} at 50 0
\put{$\vdots$} at 0 -2
\put{$\vdots$} at 10 -2
\put{$\vdots$} at 20 -2
\put{$\vdots$} at 30 -2
\put{$\vdots$} at 40 -2
\put{$\ddots$} at 45 -2
\put{sum of squares} at 50 43 
\endpicture}
$$
\caption{\small The Bratteli diagram for the decomposition of $\VV^{\otimes k}$ and for the representations of $\M_k(x)$.}  
\end{figure}
\noindent where  as before $\mathsf{m}_{k,r}$ equals  the number of  Motzkin paths of length $k$ and rank $r$.  
Since the Motzkin number $\mathcal M_k$ counts the number of walks on $\NN$  with $k$ steps of $-1,0,$ or $1$ beginning and ending at 0,    
$${\mathcal M}_k = \mathsf{m}_{k,0} = \hbox{\rm the multiplicity of $\VV(0)$ in $\VV^{\otimes k}$ }.$$
Moreover, this leads to a third realization of \eqref{eq:sumofsqs},
${\mathcal M}_{2k} = \sum_{r = 0}^k \mathsf{m}_{k,r}^2,$  
because walks of length $2k$ from (0) at the top to (0)
at level $2k$ exactly correspond, by reflection about level $k$, with all walks of length $2k$ from the top of the diagram to level $k$ and then back up to the top.

\end{subsection}

\begin{subsection}{The centralizer algebra of the $\uqsl$ tensor action}

Let $\M_k = \End_{\uqsl}(\VV^{\otimes k})$ be the centralizer of $\uqsl$ acting on $\VV^{\otimes k}$, so that  
\begin{equation}
\M_k = \left\{\  \phi \in \End(\VV^{\otimes k}) \  \vert \   \phi (y w) = y\phi(w) \hbox{ for all $y \in \uqsl$, $w \in \VV^{\otimes k}$} \ \right\}.
\end{equation}
Then by the classical double-centralizer theory (see for example \cite[Secs. 3B and 68]{CR}), we know the
following:
\begin{itemize}
\item $\M_k$ is a semisimple associative $\KK$-algebra with irreducible representations labeled by $0, 1, \ldots, k$. We let
$
\left\{\  \M_k^{(r)}  \ \vert \ 0 \le r \le k \right\}
$
denote the set of irreducible $\M_k$-modules.

\item $\dim(\M_k^{(r)}) = \mathsf{m}_{k,r}$.
\item We can naturally embed the algebras  $\M_0 \subseteq \M_1 \subseteq \M_2 \cdots $,  and the edges from level $k+1$
to level $k$ in Figure 1 represent the restriction rule for $\M_k \subseteq \M_{k+1}$.  Therefore, Figure 1 gives
the Bratteli diagram for the tower of semisimple algebras $\M_k$. 
\item The tensor space $\VV^{\otimes k}$ decomposes as \begin{equation}
\begin{array}{rll}
\VV^{\otimes k} & \cong \displaystyle{\bigoplus_{r = 0}^k}\,\, \mathsf{m}_{k,r} \VV(r) & \hbox{ as a $\uqsl$-module}, \\
& \cong \displaystyle{\bigoplus_{r = 0}^k}\,  (r+1) \M_k^{(r)} & \hbox{ as an $\M_k$-module}, \\
& \cong \displaystyle{\bigoplus_{r = 0}^k}\left(\VV(r) \otimes \M_k^{(r)} \right) & \hbox{ as a $(\uqsl,\M_k)$-bimodule}. \\
\end{array}
\end{equation}
\noindent Note that it follows from these expressions that 
\begin{equation} \label{eq:bimoduleidentity}
3^k = \sum_{r = 0}^k (r+1) \mathsf{m}_{k,r}.
\end{equation}

\item By general Wedderburn theory, the dimension of $\M_k$ is the sum of the squares of the dimensions of its irreducible modules,  
\begin{equation}\label{eq:evid}
\dim(\M_k) = \sum_{r = 0}^k \mathsf{m}_{k,r}^2 = {\mathcal M}_{2k}.
\end{equation}
\end{itemize}

\end{subsection}

\begin{subsection} {The centralizer of the Temperley-Lieb action and a formula
for Motzkin numbers}  

The centralizer algebra of $\uqsl$ acting on the $k$th tensor power
$\VV(1)^{\otimes k}$ of the $2$-dimensional irreducible module  $\VV(1)$  
is known to be the Temperley-Lieb algebra $\TL_k(\qq+\qq^{-1})$.    
The irreducible modules for $\TL_k(\qq+\qq^{-1})$ are labeled by
the integers $k-2\ell$, $\ell=0,1, \dots, \lfloor k/2 \rfloor$ (see for example \cite{GHJ} or \cite{W}), and if
$\T_k^{(k-2\ell)}$ is the corresponding irreducible $\TL_k(\qq+\qq^{-1})$-module, then
\begin{equation}\label{eq:tempdims} 
\dim\big(\T_k^{(k-2\ell)}\big) = \left \{ {{k}\atop {\ell}} \right\}  :
= {k \choose \ell}-{k \choose \ell-1}.
\end{equation}
\begin{remark} {\rm The notation $\left \{ {{k}\atop {\ell}} \right\}$   has sometimes been used for Stirling numbers
of the second kind.  In this paper, it will always mean the difference of binomial coefficients as in \eqref{eq:tempdims}.
It has been customary to use $\{\ \}$ to denote dimensions of irreducible $\TL_k(x)$-modules (see [GHJ], for example),
and we follow that practice here. }
\end{remark}

Now  
\begin{equation} \VV(1)^{\otimes k} = \bigoplus_{\ell=0}^{ \lfloor k/2 \rfloor} \VV(k-2\ell)\otimes \T_k^{(k-2\ell)} \end{equation}
as an  $(\uqsl,\TL_k(\qq+\qq^{-1}))$-bimodule.  
Then using the fact that $\VV(0)$ is a trivial 1-dimensional $\uqsl$-module
isomorphic to the field $\KK$,   we have for $\VV = \VV(0) \oplus \VV(1)$,

\begin{eqnarray}\label{eq:TL}
\VV^{\otimes k} &=& \bigoplus_{n=0}^k  {k \choose n} \VV(1)^{\otimes n}  
= \bigoplus_{n=0}^k  {k \choose n} \left( \bigoplus_{\ell=0}^{\lfloor n/2 \rfloor}\VV(n-2\ell) \otimes \T_n^{(n-2\ell)}\right). 
\end{eqnarray}
Let us fix $r$,  and as above, let $\mathsf{m}_{k,r}$ denote the multiplicity of $\VV(r)$
in the decomposition of  $\VV^{\otimes k}$ as a $\uqsl$-module (which is also the  
dimension of the irreducible module $\M_k^{(r)}$ for $\M_k$).  Then by
examining the above equation for when $r = n-2\ell$,  we obtain
 \begin{eqnarray}  \label{eq:MT}
 \mathsf{m}_{k,r}&=& \sum_{\ell=0}^{\lfloor (k-r)/2 \rfloor}  {k \choose {r+2\ell}} \dim \left(\T_{r+2\ell}^{(r)}\right) 
= \sum_{\ell=0}^{\lfloor (k-r)/2 \rfloor}   {k \choose {r+2\ell}} \left \{ {{r+2\ell}\atop {\ell}} \right\}.
\end{eqnarray}

\begin{example}  {\rm When $k = 4$,  this formula says that  
$\mathsf{ m}_{4,0} = 1 \cdot 1 + 6 \cdot 1 + 1 \cdot 2 = 9$, 
$ \mathsf{m}_{4,1} = 4 \cdot 1 + 4 \cdot 2= 12,$
$ \mathsf{m}_{4,2} = 6 \cdot 1 + 1 \cdot 3= 9,$
$ \mathsf{m}_{4,3} = 4 \cdot 1 = 4,$ and
$ \mathsf{m}_{4,4} = 1 \cdot 1= 1$
(compare with line $k=4$ in Figure 1). Thus  $\dim(\M_4) = \sum_{r=0}^4 \mathsf{m}_{k,r}^2= 81 + 144 +81 + 16 + 1  = 323 = {\mathcal M}_8$,  as expected.} 
\end{example}

 \end{subsection}

\begin{subsection}{The action of the Motzkin algebra on tensor space}

As the notation and equation \eqref{eq:evid} suggest,   $\M_k$ is
a Motzkin algebra  $\M_k(x)$ for
a specific choice of the parameter $x$.  In this section, we show that when $x =1 - \qq - \qq^{-1}$ there is an action of $\M_k(1 - \qq - \qq^{-1})$ on the tensor space $\VV^{\otimes k}$ such that $\M_k(1 - \qq - \qq^{-1}) \cong \M_k$. 

 Define nondegenerate bilinear forms  $\langle \cdot , \cdot\ranglet$ and $\langle\cdot, \cdot\rangleb$ on $\VV$ by decreeing
\begin{equation}\label{eq:biform}
\begin{array}{lll}
\langle v_{-1}, v_1 \ranglet = \qq^{-1/2}, \quad &  \langle v_{0}, v_0 \ranglet  = 1, \quad&  \langle v_{1}, v_{-1} \ranglet = -\qq^{1/2},  \\
\langle v_{-1}, v_1 \rangleb = -\qq^{-1/2}, \quad &  \langle v_{0}, v_0 \rangleb  = 1, \quad&  \langle v_{1}, v_{-1} \rangleb = \qq^{1/2},  \\
\end{array}
\end{equation}
and
$$
\langle v_{i}, v_j \ranglet =  \langle v_{i}, v_j \rangleb = 0, \quad \text{ for all other $i, j$.}
$$
Notice that
\begin{equation}\label{ParameterSum}
\begin{array}{lll}
\sum_{i,j} \langle v_{i}, v_j \ranglet \langle v_{i}, v_j \rangleb & = &  \langle v_{-1}, v_{1} \ranglet  \langle v_{-1}, v_{1} \rangleb
+ \langle v_{0}, v_{0} \ranglet \langle v_{0}, v_{0} \rangleb  \\
& & \hskip.2truein  +  \langle v_{1}, v_{-1} \ranglet \langle v_{1}, v_{-1} \rangleb \\
& = & - \qq^{-1} + 1 - \qq.
\end{array}
\end{equation}

For $d$ in the set $\mathfrak{ M}_{k}$ of all Motzkin $k$-diagrams, we define an action of $d$ on the basis of simple tensors
in $\VV^{\otimes k}$  by
\begin{equation}\label{ActionOnTensorSpace}
d (v_{i_1} \otimes \cdots \otimes v_{i_k}) = \sum_{j_1, \ldots, j_k}  (d)_{i_1, \ldots, i_k}^{j_1, \ldots, j_k}\  v_{j_1} \otimes \cdots \otimes v_{j_k},
\end{equation}
where $(d)_{i_1, \ldots, i_k}^{j_1, \ldots, j_k}$ is computed by labeling the vertices in the bottom row of $d$ with $v_{i_1}, \ldots, v_{i_k}$ and the vertices in the top row of $d$ with $v_{j_1}, \ldots, v_{j_k}$.  Then
$$
(d)_{i_1, \ldots, i_k}^{j_1, \ldots, j_k} = \prod_{\varepsilon \in d} (\varepsilon)_{i_1, \ldots, i_k}^{j_1, \ldots, j_k},
$$
where the product is over the weights of all connected components $\varepsilon$ (edges and isolated vertices) in the diagram $d$, where by the weight of $\varepsilon$ we mean  
$$
(\varepsilon)_{i_1, \ldots, i_k}^{j_1, \ldots, j_k} = \begin{cases}
\delta_{a,0}, & \text{ if $\varepsilon$ is an isolated vertex labeled by $v_a$,} \\
\delta_{a,b}, &  \text{ if $\varepsilon$ is a  vertical edge connecting $v_a$ and $v_b$,} \\
\langle v_a, v_b \ranglet, &  \text{ if $\varepsilon$ is a horizontal edge in the top row of $d$} \\
&  \text{\quad connecting $v_a$ (left) and $v_b$ (right).} \\
\langle v_a, v_b \rangleb &  \text{ if $\varepsilon$ is a horizontal edge in the bottom row of $d$} \\
&  \text{\quad connecting $v_a$ (left) and $v_b$ (right),} \\
\end{cases}
$$
where $\delta_{a,b}$ is the Kronecker delta.
 For  example, for this labeled diagram
$$
{\beginpicture
\setcoordinatesystem units <0.6cm,0.3cm>        
\setplotarea x from 0 to 8, y from -2 to 3
\put{$\bullet$} at  0 -1  \put{$\bullet$} at  0 2
\put{$\bullet$} at  1 -1  \put{$\bullet$} at  1 2
\put{$\bullet$} at  2 -1  \put{$\bullet$} at  2 2
\put{$\bullet$} at  3 -1  \put{$\bullet$} at  3 2
\put{$\bullet$} at  4 -1  \put{$\bullet$} at  4 2
\put{$\bullet$} at  5 -1  \put{$\bullet$} at  5 2
\put{$\bullet$} at  6 -1  \put{$\bullet$} at  6 2
\put{$\bullet$} at  7 -1  \put{$\bullet$} at  7 2
\put{$\bullet$} at  8 -1  \put{$\bullet$} at  8 2
\put{$\bullet$} at  9 -1  \put{$\bullet$} at  9 2
\put{$\bullet$} at  10 -1  \put{$\bullet$} at  10 2
\put{$v_{j_1}$} at 0 3  
\put{$v_{j_2}$} at 1 3 
\put{$v_{j_3}$} at 2 3  
\put{$v_{j_4}$} at 3 3  
\put{$v_{j_5}$} at 4 3  
\put{$v_{j_6}$} at 5 3  
\put{$v_{j_7}$} at 6 3  
\put{$v_{j_8}$} at 7 3
\put{$v_{j_9}$} at 8 3
\put{$v_{j_{10}}$} at 9 3
\put{$v_{j_{11}}$} at 10 3
\put{$v_{i_1}$} at 0 -2  
\put{$v_{i_2}$} at 1 -2 
\put{$v_{i_3}$} at 2 -2  
\put{$v_{i_4}$} at 3 -2  
\put{$v_{i_5}$} at 4 -2  
\put{$v_{i_6}$} at 5 -2  
\put{$v_{i_7}$} at 6 -2  
\put{$v_{i_8}$} at 7 -2
\put{$v_{i_9}$} at 8 -2
\put{$v_{i_{10}}$} at 9 -2
\put{$v_{i_{11}}$} at 10 -2

\plot 10 2 8 -1 /
\plot 0 2  1 -1 /
\plot 5 2 2 -1 /
\plot 6 2 7 -1  /
\setquadratic
\plot 2 2 3 1 4 2 /
\plot 3 -1 4.5 .25 6 -1 /
\plot 4 -1 4.5 -.25 5 -1 /
\plot 9 -1 9.5 -.25 10 -1 /
\plot 7 2 7.5 1 8 2 /
\endpicture}
$$
we have 
\begin{align*}
(d)_{i_1, \ldots, i_k}^{j_1, \ldots, j_k}\ =& \ \langle v_{j_3},v_{j_5} \ranglet \langle v_{j_8},v_{j_9} \ranglet  
\langle v_{i_4},v_{i_7} \rangleb \langle v_{i_5},v_{i_6} \rangleb \langle v_{i_{10}}, v_{i_{11}} \rangleb \  \times  \\
&  \qquad \qquad  \delta_{j_1,i_2} \delta_{j_6,i_3} \delta_{j_7,i_8} \delta_{j_{11}, i_9} 
\delta_{j_2,0} \delta_{j_4,0} \delta_{j_{10},0} \delta_{i_1,0}.
\end{align*} 
The representing transformation of $d$ is obtained by extending this action linearly to all of $\VV^{\otimes k}$. 

Let $\mathfrak{ t,l,r} \in \mathfrak{M}_2$ and $\mathsf{id} \in \mathfrak{M}_1$ be the diagrams given by
$$
\mathfrak{t} = 
{\beginpicture
\setcoordinatesystem units <0.5cm,0.2cm>        
\setplotarea x from 0.5 to 2.5, y from -1 to 2
\put{$\bullet$} at  1 -1  \put{$\bullet$} at  1 2
\put{$\bullet$} at  2 -1  \put{$\bullet$} at  2 2
\setquadratic
\plot 1 2 1.5 1 2 2 /
\plot 1 -1 1.5 0 2 -1 /
\endpicture}, \qquad
\mathfrak{l}= 
{\beginpicture
\setcoordinatesystem units <0.5cm,0.2cm>        
\setplotarea x from 0.5 to 2.5, y from -1 to 2
\put{$\bullet$} at  1 -1  \put{$\bullet$} at  1 2
\put{$\bullet$} at  2 -1  \put{$\bullet$} at  2 2
\plot 1 2  2 -1 /
\endpicture}, \qquad
\mathfrak{r}= 
{\beginpicture
\setcoordinatesystem units <0.5cm,0.2cm>        
\setplotarea x from 0.5 to 2.5, y from -1 to 2
\put{$\bullet$} at  1 -1  \put{$\bullet$} at  1 2
\put{$\bullet$} at  2 -1  \put{$\bullet$} at  2 2
\plot 2 2  1 -1 /
\endpicture}, \qquad
\mathsf{id} = 
{\beginpicture
\setcoordinatesystem units <0.5cm,0.2cm>        
\setplotarea x from 0.5 to 1.5, y from -1 to 2
\put{$\bullet$} at  1 -1  \put{$\bullet$} at  1 2
\plot 1 2  1 -1 /
\endpicture}.
$$
Then under the action defined in \eqref{ActionOnTensorSpace}, $\mathsf{id}\,v_i = v_i$ for all $i$, so  
$\mathsf{id}$  is represented by the identity map
$\id_\VV$ on $\VV$;  the representing transformations $R$
and $L$ of $\mathfrak{r}$ and $\mathfrak{l}$ on $\VV^{\otimes 2}$  move simple tensors right and left,
$$
R (v_i \otimes v_j)  = \delta_{j,0}\ v_0 \otimes v_i, \qquad
L (v_i \otimes v_j)  =  \delta_{i,0}\ v_j \otimes v_0;
$$
and  the representing transformation $T$ of $\mathfrak{t}$ on $\VV^{\otimes 2}$  acts as a ``contraction" map,
\begin{align*}
T (v_i \otimes v_j) 
& =   \langle v_i, v_j \rangleb \big(   \langle v_{-1}, v_{1} \ranglet ( v_{-1} \otimes v_{1})  \\
& \hspace{1 truein} +  \langle v_0, v_0 \ranglet (v_0 \otimes v_0)  
+ \langle v_{1}, v_{-1} \ranglet (v_{1} \otimes v_{-1}) \big),  \\
& =   \langle v_i, v_j \rangleb  \left( \qq^{-1/2} v_{-1} \otimes v_1 + v_0 \otimes v_0   -\qq^{1/2} v_{1} \otimes v_{-1}     \right). 
\end{align*}
Using these maps, we define the representing transformations of the generators $t_i$, $\ell_i$, $r_i,  (1 \leq i < k)$ acting on $\VV^{\otimes k}$ to be
 \begin{equation}\label{eq:TLRact}
\begin{array}{rclll}
T_i = \id_\VV^{\otimes i-1} \otimes T \otimes \id_\VV^{\otimes k - (i+1)}, \qquad &1 \le i < k, \\ \\
L_i  = \id_\VV^{\otimes i-1} \otimes L \otimes \id_\VV^{\otimes k - (i+1)},\qquad &1 \le i < k, \\ \\
R_i = \id_\VV^{\otimes i-1} \otimes R \otimes \id_\VV^{\otimes k - (i+1)},\qquad &1 \le i < k.
\end{array}
\end{equation}
Since $p_1 = \ell_1 r_1$, $p_i = \ell_i r_i = r_i \ell_i$ for $1 < i  < k$, and $p_k = r_k \ell_k$, we see that the representing transformation $P_i$ of $p_i$ 
is the projection onto $\VV(0)$ in the $i$th tensor position.

Note that in order for this action to satisfy the relation $T_i^2 = x T_i$, the parameter for the Motzkin algebra must be  $1-\qq-\qq^{-1}$, since
\begin{align*}
T^2 (v_a \otimes v_b) & = \langle v_a, v_b \rangleb  \sum_{i, j} \langle v_i, v_j \ranglet T (v_i \otimes v_j) \\
& = \langle v_a, v_b \rangleb  \sum_{i, j} \langle v_i, v_j \ranglet \langle v_i, v_j \rangleb   \sum_{\ell,m} \langle v_\ell, v_m \ranglet  (v_\ell\otimes v_m) \\
& =
 \left(\sum_{i, j}  \langle v_i, v_j \ranglet \langle v_i, v_j \rangleb \right)   \langle v_a, v_b \rangleb    \sum_{\ell,m} \langle v_\ell, v_m \ranglet  (v_\ell\otimes v_m) \\
& =  ( 1-\qq-\qq^{-1})T(v_a\otimes v_b),
\end{align*} 
by \eqref{ParameterSum}. 

Set 
\begin{equation}\label{eq:qdef}  \zq = 1 - \qq- \qq^{-1}. \end{equation}  
For example,   $\zeta_{1} = -1$  and $\zeta_{-1} = 3$.
  
\begin{prop} There is a representation  $\pi_k: \Motz \to \End(\VV^{\otimes k})$ such that 
$\pi_k(t_i) = T_i,\, \pi_k(\ell_i) = L_i,$ and $\pi_k(r_i) = R_i$ for $1 \leq i < k$.  \end{prop}

\begin{proof}  For diagrams $d_1, d_2 \in \mathfrak{M}_{k}$ it suffices to show that
$$
(d_1 d_2)_{i_1, \ldots, i_k}^{j_1, \ldots, j_k} = \sum_{n_1, \ldots, n_k} 
(d_1)^{j_1, \ldots, j_k}_{n_1, \ldots, n_k}
(d_2)^{n_1, \ldots,n_k}_{i_1, \ldots,i_k}.
$$
We analyze the edges of $d_1 d_2$ case by case.   Let $v_{-1}^\ast = v_1, v_0^\ast = v_0,$ and $v_1^\ast = v_{-1}$.

\textbf{Case 1}:  \ Vertical edges in $d_1 d_2$.  

These arise from a vertical edge in $d_1$ connected to a vertical edge in 
$d_2$ via a chain of an  even number (possibly 0) of horizontal edges in the middle of $d_1 d_2$.   See the example below.  The only way to achieve a nonzero matrix entry for $d_1 d_2$ is to have the labeling follow the pattern $v_a, v_a, v_a^\ast, v_a, v_a^\ast, \ldots, v_a^\ast, v_a, v_a$ as illustrated in the example below. Thus,  the horizontal edge in the product $d_1 d_2$ will require that the top vertex have the same subscript as the bottom vertex.
$$
{\beginpicture
\setcoordinatesystem units <0.6cm,0.3cm>        
\setplotarea x from 0 to 8, y from -2 to 3
\put{$d_1 = $} at 0 .5 
\put{$d_2 = $} at 0 -4.5
\put{$\bullet$} at  1 -1  \put{$\bullet$} at  1 2
\put{$\bullet$} at  2 -1  \put{$\bullet$} at  2 2
\put{$\bullet$} at  3 -1  \put{$\bullet$} at  3 2
\put{$\bullet$} at  4 -1  \put{$\bullet$} at  4 2
\put{$\bullet$} at  5 -1  \put{$\bullet$} at  5 2
\put{$\bullet$} at  6 -1  \put{$\bullet$} at  6 2
\put{$\bullet$} at  7 -1  \put{$\bullet$} at  7 2
\put{$\bullet$} at  8 -1  \put{$\bullet$} at  8 2
\put{$\bullet$} at 9 -1  \put{$\bullet$} at  9 2
\put{$\bullet$} at  10 -1  \put{$\bullet$} at  10 2
\put{$\bullet$} at  1 -3  \put{$\bullet$} at  1 -6
\put{$\bullet$} at  2 -3  \put{$\bullet$} at  2  -6
\put{$\bullet$} at  3 -3  \put{$\bullet$} at  3  -6
\put{$\bullet$} at  4 -3  \put{$\bullet$} at  4  -6
\put{$\bullet$} at  5 -3  \put{$\bullet$} at  5  -6
\put{$\bullet$} at  6 -3  \put{$\bullet$} at  6  -6
\put{$\bullet$} at  7 -3  \put{$\bullet$} at  7  -6
\put{$\bullet$} at  8 -3  \put{$\bullet$} at  8  -6
\put{$\bullet$} at 9 -3  \put{$\bullet$} at  9  -6
\put{$\bullet$} at  10 -3  \put{$\bullet$} at  10  -6
\put{$v_{a}$} at 4 3  
\put{$v_{a}$} at 1 -2 
\put{$v_{a}^\ast$} at 2 -2 
\put{$v_{a}$} at 4 -2 
\put{$v_{a}^\ast$} at 5 -2 
\put{$v_{a}$} at 6 -2 
\put{$v_{a}$} at 10 -2 
\put{$v_{a}^\ast$} at 7 -2 
\put{$v_{a}$} at 8 -2
\put{$v_{a}^\ast$} at 9 -2
\put{$v_{a}$} at 8 -7 
\plot 4 2  1 -1 /
\plot 6 -3 8 -6 /
\setquadratic
\plot 1 -3 1.5 -4 2 -3 /
\plot 2 -1 3 0 4 -1 /
\plot 4 -3 4.5 -4 5 -3 /
\plot 5 -1 7.5 1 10 -1 /
\plot 7 -3 8.5 -4.5 10 -3 /
\plot 8 -3 8.5 -3.5 9 -3 /
\plot 7 -1 7.5 0 8 -1 /
\plot 6 -1 7.5 .5 9 -1 /
\endpicture}
$$
We claim that the product of the edge weights corresponding to the $2h$ horizontal edges in the middle of $d_1 d_2$ is 1.   To see this, direct the edges in the path so that the path travels from the top of $d_1$ to the bottom of $d_2$ as we have done here in our example,
$$
{\beginpicture
\setcoordinatesystem units <0.6cm,0.3cm>        
\setplotarea x from 0 to 8, y from -2 to 3
\put{$d_1 = $} at 0 .5 
\put{$d_2 = $} at 0 -4.5 
\put{$\bullet$} at  1 -1  \put{$\bullet$} at  1 2
\put{$\bullet$} at  2 -1  \put{$\bullet$} at  2 2
\put{$\bullet$} at  3 -1  \put{$\bullet$} at  3 2
\put{$\bullet$} at  4 -1  \put{$\bullet$} at  4 2
\put{$\bullet$} at  5 -1  \put{$\bullet$} at  5 2
\put{$\bullet$} at  6 -1  \put{$\bullet$} at  6 2
\put{$\bullet$} at  7 -1  \put{$\bullet$} at  7 2
\put{$\bullet$} at  8 -1  \put{$\bullet$} at  8 2
\put{$\bullet$} at 9 -1  \put{$\bullet$} at  9 2
\put{$\bullet$} at  10 -1  \put{$\bullet$} at  10 2
\put{$\bullet$} at  1 -3  \put{$\bullet$} at  1 -6
\put{$\bullet$} at  2 -3  \put{$\bullet$} at  2  -6
\put{$\bullet$} at  3 -3  \put{$\bullet$} at  3  -6
\put{$\bullet$} at  4 -3  \put{$\bullet$} at  4  -6
\put{$\bullet$} at  5 -3  \put{$\bullet$} at  5  -6
\put{$\bullet$} at  6 -3  \put{$\bullet$} at  6  -6
\put{$\bullet$} at  7 -3  \put{$\bullet$} at  7  -6
\put{$\bullet$} at  8 -3  \put{$\bullet$} at  8  -6
\put{$\bullet$} at 9 -3  \put{$\bullet$} at  9  -6
\put{$\bullet$} at  10 -3  \put{$\bullet$} at  10  -6
\put{$v_{a}$} at 4 3  
\put{$v_{a}$} at 1 -2 
\put{$v_{a}^\ast$} at 2 -2 
\put{$v_{a}$} at 4 -2 
\put{$v_{a}^\ast$} at 5 -2 
\put{$v_{a}$} at 6 -2 
\put{$v_{a}$} at 10 -2 
\put{$v_{a}^\ast$} at 7 -2 
\put{$v_{a}$} at 8 -2
\put{$v_{a}^\ast$} at 9 -2
\put{$v_{a}$} at 8 -7 
\put{.} at 11 -6
\plot 4 2  1 -1 /
\plot 6 -3 8 -6 /
\arrow <3 pt> [1,2] from 4 2 to 2.5 .5
\arrow <3 pt> [1,2] from 1.5 -4 to 1.55 -4
\arrow <3 pt> [1,2] from 3 0 to 3.05 0
\arrow <3 pt> [1,2] from 4.5 -4 to 4.55 -4
\arrow <3 pt> [1,2] from 7.5 1 to 7.55 1
\arrow <3 pt> [1,2] from 8.5 -3.5 to 8.55 -3.5
\arrow <3 pt> [1,2] from 7.5 0 to 7.55 0
\arrow <3 pt> [1,2] from 8.55 -4.5 to 8.5 -4.5
\arrow <3 pt> [1,2] from 7.55 .5 to 7.5 .5
\arrow <3 pt> [1,2] from 6 -3 to 7 -4.5
\setquadratic
\plot 1 -3 1.5 -4 2 -3 /
\plot 2 -1 3 0 4 -1 /
\plot 4 -3 4.5 -4 5 -3 /
\plot 5 -1 7.5 1 10 -1 /
\plot 7 -3 8.5 -4.5 10 -3 /
\plot 8 -3 8.5 -3.5 9 -3 /
\plot 7 -1 7.5 0 8 -1 /
\plot 6 -1 7.5 .5 9 -1 /
\endpicture}
$$
Observe that right arrows in the bottom of $d_1$ correspond to the weight $\langle v_a^\ast, v_a \rangleb$,  and left arrows correspond to the weight $\langle v_a, v_a^\ast \rangleb.$
Similarly, right arrows in the top of $d_2$ correspond to the weight $\langle v_a, v_a^\ast \ranglet$,  and left arrows correspond to the weight $\langle v_a^\ast, v_a \ranglet.$  Since our forms satisfy
$$
\langle v_a^\ast, v_a \rangleb \langle v_a, v_a^\ast \ranglet = \langle v_a, v_a^\ast \rangleb \langle v_a^\ast, v_a \ranglet = 1,
$$
the problem amounts to showing that there are the same number of right (and thus left) edges in $d_1$ as there are in  $d_2$.

 Our proof of this last statement is by induction on $h$. The case $h = 0$ is trivially true, so assume $h >0$.  Pick a horizontal edge $\{i,j\}$, with $i < j$,  in the top row of $d_2$ with no edge nested above it. That is, there are no edges in $d_2$ connected to vertices in  $\{i+1, \ldots, j-1\}$.  Now consider the edges $\{s,i\}$ and $\{j, t\}$ in $d_1$. Then $s,t \not\in \{i+1, \ldots, j-1\}$ and, by the planarity of $d_1$, we have $s < t$.   This means that $s < i$ or $t > j$ (or both).  Suppose that $t > j$ then the edges $\{i,j\}$ and $\{j,t\}$ have the same orientation in the path. We pair these two edges, delete them, and replace $\{s,i\}$ with $\{s,t\}$.  Because there are no edges connecting with $\{i+1, \ldots, j-1\}$, the new diagram is planar and $\{s,t\}$ has the same direction as $\{s,i\}$.  Now we apply induction to pair the rest of the edges in this new graph.  The case $s < i$ is entirely similar, and the 
 special case where $s$ or $t$ is in the top row is also easy.  This proves that the edges can be paired, and thus the product of the weights is 1,  resulting in a single vertical edge of weight 1 in  $d_1 d_2.$

\textbf{Case 2}:  \  A loop in the middle row of $d_1 d_2$.   

When there is a loop, the middle row consists of a cycle  of  $h$ horizontal edges in the bottom row of $d_1$ and $h$ horizontal edges in the top row of $d_2$ as in the picture below, 
where only the edges in the loop are displayed.  The only nonzero labeling will be a cycle $v_a, v_a^\ast, v_a, v_a^\ast, \ldots, v_a$ for $a = -1,0,1$ as illustrated here, 
$$
{\beginpicture
\setcoordinatesystem units <0.5cm,0.3cm>        
\setplotarea x from 0 to 8, y from -2 to 3
\put{$\bullet$} at  1 -1  \put{$\bullet$} at  1 2
\put{$\bullet$} at  2 -1  \put{$\bullet$} at  2 2
\put{$\bullet$} at  3 -1  \put{$\bullet$} at  3 2
\put{$\bullet$} at  4 -1  \put{$\bullet$} at  4 2
\put{$\bullet$} at  5 -1  \put{$\bullet$} at  5 2
\put{$\bullet$} at  6 -1  \put{$\bullet$} at  6 2
\put{$\bullet$} at  7 -1  \put{$\bullet$} at  7 2
\put{$\bullet$} at  8 -1  \put{$\bullet$} at  8 2
\put{$\bullet$} at 9 -1  \put{$\bullet$} at  9 2
\put{$\bullet$} at  10 -1  \put{$\bullet$} at  10 2
\put{$\bullet$} at  1 -3  \put{$\bullet$} at  1 -6
\put{$\bullet$} at  2 -3  \put{$\bullet$} at  2  -6
\put{$\bullet$} at  3 -3  \put{$\bullet$} at  3  -6
\put{$\bullet$} at  4 -3  \put{$\bullet$} at  4  -6
\put{$\bullet$} at  5 -3  \put{$\bullet$} at  5  -6
\put{$\bullet$} at  6 -3  \put{$\bullet$} at  6  -6
\put{$\bullet$} at  7 -3  \put{$\bullet$} at  7  -6
\put{$\bullet$} at  8 -3  \put{$\bullet$} at  8  -6
\put{$\bullet$} at 9 -3  \put{$\bullet$} at  9  -6
\put{$\bullet$} at  10 -3  \put{$\bullet$} at  10  -6
\put{$v_{a}$} at 1 -2  
\put{$v_{a}^\ast$} at 2 -2 
\put{$v_{a}$} at 3 -2 
\put{$v_{a}^\ast$} at 5 -2 
\put{$v_{a}$} at 6 -2 
\put{$v_{a}^\ast$} at 10 -2 
\put{$v_{a}$} at 9 -2 
\put{$v_{a}^\ast$} at 8 -2 
\setquadratic
\plot 1 -1 1.5 0 2 -1 /
\plot 2 -3 2.5 -4 3 -3 /
\plot 3 -1 4 0 5 -1 /
\plot 5 -3 5.5 -4 6 -3 /
\plot 6 -1 8 1 10 -1 /
\plot 9 -3 9.5 -4 10 -3 /
\plot 8 -1 8.5 0 9 -1 /
\plot 1 -3 4.5 -5 8 -3 /
\endpicture}
$$
Removing the edges connected to the leftmost $v_a$ leaves a path from $v_a^\ast$ (starting from the top of $d_2$) to $v_a^\ast$ (ending in the bottom of $d_1$).  The argument from the previous case shows that the product of the weights of this path is 1.  The product of the weights on the remaining two edges in the cycle is 
$$
\langle v_a, v_a^\ast \ranglet  \langle v_a, v_a^\ast \rangleb = \begin{cases}
-\qq^{-1} \ & \text{if $a = -1$}, \\
\ \ \, 1 \  & \text{if $a = 0$}, \\
-\qq \ & \text{if $a = 1$}.
\end{cases}
$$
Summing  over $a = -1,0,1$, as in \eqref{ParameterSum}, gives the weight $1 - \qq- \qq^{-1} = \zq$.

\textbf {Case 3}:  \ Horizontal edges in the top row of $d_1 d_2$.   

A horizontal edge in the top row of $d_1$ will also appear in the top row of $d_1 d_2$ with the appropriate weight.  It is also possible to gain a horizontal edge in $d_1 d_2$ through two vertical edges in $d_1$ connected by a path in the middle row of $d_1 d_2$ as pictured in the example below.  The path forces $v_a$ to be connected to $v_a^\ast$ in the top row.  The path in the middle row from the first $v_a$ to the last $v_a^\ast$ will have a weight of 1 by the argument in Case 1.  The remaining edge will be weighted $\langle v_a, v_a^\ast \ranglet$, as desired.
$$
{\beginpicture
\setcoordinatesystem units <0.5cm,0.3cm>        
\setplotarea x from 0 to 8, y from -2 to 3
\put{$\bullet$} at  1 -1  \put{$\bullet$} at  1 2
\put{$\bullet$} at  2 -1  \put{$\bullet$} at  2 2
\put{$\bullet$} at  3 -1  \put{$\bullet$} at  3 2
\put{$\bullet$} at  4 -1  \put{$\bullet$} at  4 2
\put{$\bullet$} at  5 -1  \put{$\bullet$} at  5 2
\put{$\bullet$} at  6 -1  \put{$\bullet$} at  6 2
\put{$\bullet$} at  7 -1  \put{$\bullet$} at  7 2
\put{$\bullet$} at  8 -1  \put{$\bullet$} at  8 2
\put{$\bullet$} at 9 -1  \put{$\bullet$} at  9 2
\put{$\bullet$} at  10 -1  \put{$\bullet$} at  10 2
\put{$\bullet$} at  1 -3  \put{$\bullet$} at  1 -6
\put{$\bullet$} at  2 -3  \put{$\bullet$} at  2  -6
\put{$\bullet$} at  3 -3  \put{$\bullet$} at  3  -6
\put{$\bullet$} at  4 -3  \put{$\bullet$} at  4  -6
\put{$\bullet$} at  5 -3  \put{$\bullet$} at  5  -6
\put{$\bullet$} at  6 -3  \put{$\bullet$} at  6  -6
\put{$\bullet$} at  7 -3  \put{$\bullet$} at  7  -6
\put{$\bullet$} at  8 -3  \put{$\bullet$} at  8  -6
\put{$\bullet$} at 9 -3  \put{$\bullet$} at  9  -6
\put{$\bullet$} at  10 -3  \put{$\bullet$} at  10  -6
\put{$v_{a}$} at 5 3 
\put{$v_{a}^\ast$} at 7 3 
\put{$v_{a}$} at 2 -2 
\put{$v_{a}^\ast$} at 3 -2 
\put{$v_{a}$} at 5 -2 
\put{$v_{a}^\ast$} at 6 -2 
\put{$v_{a}^\ast$} at 10 -2 
\put{$v_{a}$} at 9 -2 
\plot 5 2  2 -1 /
\plot 7 2 10 -1 /
\setquadratic
\plot 2 -3 2.5 -4 3 -3 /
\plot 3 -1 4 0 5 -1 /
\plot 5 -3 5.5 -4 6 -3 /
\plot 6 -1 7.5 0 9 -1 /
\plot 9 -3 9.5 -4 10 -3 /
\endpicture}$$

\textbf {Case 4}: \  Horizontal edges in the bottom row of $d_1 d_2$.  

This case is completely analogous to Case 3.    \end{proof}

Using the representation of $\Motz$ on $\VV^{\otimes k}$, we can now show the following Schur-Weyl duality result  holds in this setting.

\begin{thm}\label{thm:commutingact}  Let $\pi_k: \Motz \rightarrow \End(\VV^{\otimes k})$
and $\rho_{\VV^{\otimes k}}: \uqsl \rightarrow  \End(\VV^{\otimes k})$ 
be the representations afforded by the action of  $\Motz$ and
$\uqsl$  on
$\VV^{\otimes k}.$   Then  $\pi_k(\Motz)$ and   $\rho_{\VV^{\otimes k}} (\uqsl)$ commute.
Thus, $\pi_k(\Motz)\subseteq \End_{\uqsl}(\VV^{\otimes k})$ and  
$\rho_{\VV^{\otimes k}}(\uqsl) \subseteq \End_{\Motz}(\VV^{\otimes k}).$   
\end{thm}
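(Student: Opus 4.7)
The plan is to proceed by reduction to a small ``local'' computation.  Since $\pi_k$ and $\rho_{\VV^{\otimes k}}$ are algebra representations, it suffices to check that each generator of $\pi_k(\Motz)$ commutes with the image of each generator of $\rho_{\VV^{\otimes k}}(\uqsl)$.  By the proposition in Section~2.3, $\M_k(\zq)$ is generated by $\mathbf{1}_k$ together with $t_i, \ell_i, r_i$ for $1 \le i < k$, and $\uqsl$ is generated by $E, F, K_1^{\pm 1}, K_2^{\pm 1}$.  Thus it is enough to show that $\pi_k(t_i), \pi_k(\ell_i), \pi_k(r_i)$ each commute with $\rho_{\VV^{\otimes k}}(y)$ for $y\in\{E,F,K_1^{\pm 1},K_2^{\pm 1}\}$.

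Next I would invoke the standard Hopf-algebraic locality principle: whenever $\phi \colon \VV \otimes \VV \to \VV \otimes \VV$ is $\uqsl$-linear (for the diagonal action coming from $\Delta$), the operator $\id^{\otimes i-1} \otimes \phi \otimes \id^{\otimes k-i-1}$ is automatically $\uqsl$-linear on $\VV^{\otimes k}$.  This is a formal consequence of coassociativity of $\Delta$ and the fact that $\Delta$ is an algebra homomorphism, so that the $\uqsl$-action on $\VV^{\otimes k}$ can be refactored through $\Delta$ applied to the pair of tensorands in positions $i, i+1$.  Applying this to the definitions in~\eqref{eq:TLRact}, the problem collapses to verifying that the three maps $T, L, R \colon \VV \otimes \VV \to \VV \otimes \VV$ are $\uqsl$-linear.

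For $T$, the key observation is that $T(v_i \otimes v_j) = \langle v_i, v_j \rangleb \cdot w$, where $w = \qq^{-1/2} v_{-1} \otimes v_1 + v_0 \otimes v_0 - \qq^{1/2} v_1 \otimes v_{-1}$ (so $w = \sum_{i,j} \langle v_i, v_j \rangle_{\mathsf t}\, v_i \otimes v_j$).  I would first verify directly from~\eqref{coproduct} and~\eqref{QuantumGroupAction} that $w$ spans a trivial one-dimensional $\uqsl$-submodule: $\Delta(E)w = \Delta(F)w = 0$ and $\Delta(K_i)w = w$.  I would then verify the corresponding $\uqsl$-invariance of the functional $\langle \cdot , \cdot \rangleb$, namely that for each generator $y$, writing $\Delta(y)(v_i \otimes v_j) = \sum_s u_s \otimes w_s$, one has $\sum_s \langle u_s, w_s \rangleb = \mathsf{u}(y)\langle v_i, v_j \rangleb$.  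Combining the two gives
$$
T(\Delta(y)(v_i \otimes v_j)) = \mathsf{u}(y)\, \langle v_i, v_j \rangleb\, w = \Delta(y)\bigl(\langle v_i, v_j \rangleb\, w\bigr) = \Delta(y)\,T(v_i \otimes v_j),
$$
so $T$ intertwines $\Delta(y)$.

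For $L$ and $R$ the argument is easier: both maps implement the natural isomorphisms between the $\uqsl$-submodules $\VV \otimes \VV(0)$ and $\VV(0) \otimes \VV$ of $\VV \otimes \VV$ via the flip $v_i \otimes v_0 \leftrightarrow v_0 \otimes v_i$, and they vanish on the remaining summands.  Because $\VV(0)$ is the trivial representation, a short check on $E, F, K_i$ shows that this flip is $\uqsl$-linear on the summands where it is defined; together with the fact that the decomposition $\VV = \VV(0) \oplus \VV(1)$ is $\uqsl$-stable, this gives $\uqsl$-linearity of $L$ and $R$.  The main obstacle, in my view, is the bookkeeping for $T$: tracking the half-integer powers $\qq^{\pm 1/2}$ in the forms $\langle \cdot , \cdot \ranglet$ and $\langle \cdot, \cdot \rangleb$ against the asymmetric quantum coproduct in~\eqref{coproduct} requires care, and it is precisely the choice $x = \zq = 1 - \qq - \qq^{-1}$ that makes the contraction-creation combinatorics work out, consistent with the relation $T_i^2 = \zq\, T_i$ verified earlier.
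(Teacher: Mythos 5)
Your plan — reduce to the generators $t_i, \ell_i, r_i$ of $\Motz$, use coassociativity to collapse to the $k=2$ case, analyze $L, R$ as natural isomorphisms between the $\uqsl$-stable summands $\VV\otimes\VV(0)$ and $\VV(0)\otimes\VV$, and analyze $T$ as the composite of the invariant contraction $\langle\cdot,\cdot\rangleb$ with the insertion of the invariant vector $w$ — is a clean, conceptual organization of what the paper dismisses as "a straightforward matrix multiplication" followed by a single illustrative example with $E$. The locality reduction and the treatment of $L$ and $R$ are correct as stated.

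However, the verification you deferred for $T$ would fail upon execution. The vector $w = \qq^{-1/2}\,v_{-1}\otimes v_1 + v_0\otimes v_0 - \qq^{1/2}\,v_1\otimes v_{-1}$ is \emph{not} fixed by $\Delta(K_1)=K_1\otimes K_1$: since $K_1 v_{-1}=v_{-1}$, $K_1 v_0=v_0$, $K_1 v_1=\qq v_1$, the summand $v_0\otimes v_0$ has $K_1$-weight $1$ while $v_{\pm1}\otimes v_{\mp1}$ have $K_1$-weight $\qq$, so $w$ mixes $K_1$-weights and
$$
\Delta(K_1)w \;=\; \qq^{1/2}\,v_{-1}\otimes v_1 + v_0\otimes v_0 - \qq^{3/2}\,v_1\otimes v_{-1} \;\neq\; w.
$$
Likewise the form is not $K_1$-invariant in your sense, since $\langle K_1 v_1, K_1 v_{-1}\rangleb = \qq\,\langle v_1,v_{-1}\rangleb$. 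Consequently $T$ does not commute with $\Delta(K_1)$: testing on $v_0\otimes v_0$, one has $T\Delta(K_1)(v_0\otimes v_0)=w$ but $\Delta(K_1)T(v_0\otimes v_0)=\Delta(K_1)w\neq w$. The same phenomenon occurs for $K_2$.

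What your computations $\Delta(E)w=\Delta(F)w=0$ and $\Delta(K)w=w$ (for $K=K_1K_2^{-1}$) genuinely establish, together with the corresponding invariance of $\langle\cdot,\cdot\rangleb$, is that $T$ commutes with the subalgebra $\mathsf{U}_\qq(\mathfrak{sl}_2)\subseteq\uqsl$ generated by $E, F, K^{\pm 1}$. This is in fact an imprecision in the source statement itself: as a $\uqsl$-module, $\VV(1)\otimes\VV(1)$ decomposes as $\VV(2)$ plus a one-dimensional constituent on which $K_1$ and $K_2$ both act by $\qq$, which for $\qq\neq1$ is \emph{not} isomorphic to $\VV(0)$; accordingly $\End_{\uqsl}(\VV\otimes\VV)$ has dimension $7$, not $\mathcal{M}_4=9$. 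The Clebsch-Gordan rule quoted in the paper, the dimension count, and the Schur--Weyl duality all concern the $\mathsf{U}_\qq(\mathfrak{sl}_2)$-module structure. If you reframe the theorem (and your proof) with $\mathsf{U}_\qq(\mathfrak{sl}_2)$ in place of $\uqsl$, your argument goes through exactly as you outlined it, and is tidier than the paper's.
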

\begin{proof} The elements $t_i, r_i, \ell_i$ $(1 \leq i < k)$  generate $\Motz$.  A straightforward matrix multiplication shows that their representing
transformations, which are given in \eqref{eq:TLRact},  commute with those of the generators $E, F, K_i^{\pm 1}$, $i=1,2$,  of $\uqsl$
on $\VV^{\otimes k}$.   We provide one illustrative calculation here.  Consider $T E$ acting on the simple tensor $v_{-1} \otimes v_{-1} \in \VV \otimes \VV$.  By \eqref{QuantumGroupAction} and \eqref{coproduct} we have
\begin{align*}
TE (v_{-1} \otimes v_{-1}) 
& = T \left( E v_{-1} \otimes K v_{-1} + v_{-1} \otimes E  v_{-1}\right) \\
& = T \left( q^{-1} v_{1} \otimes  v_{-1} + v_{-1} \otimes  v_{1}\right) \\
& = \left( q^{-1} \langle v_1, v_{-1}\rangleb +  \langle v_{-1}, v_{1}\rangleb \right)  
\left( \qq^{-1/2} v_{-1} \otimes v_1 + v_0 \otimes v_0   -\qq^{1/2} v_{1} \otimes v_{-1}     \right).
\end{align*}
On the other hand $E T (v_{-1} \otimes v_{-1}) = 0$. Thus $E$ and $T$ commute as operators on $v_{-1} \otimes v_{-1}$,  since $q^{-1} \langle v_{1}, v_{-1}\rangleb +  \langle v_{-1}, v_{1}\rangleb =0$.
\end{proof} 
\medskip

Now we turn our attention to proving that the representation 
$$\pi_k:  \Motz  \rightarrow 
\End_{\uqsl}(\VV^{\otimes k})$$  is faithful. 
The proof of this fact,  as well as the proof of Theorem \ref{thm:tensordecomp}, will involve the following order $\le$.
Suppose $\underline{\alpha} = (\alpha_1, \dots, \alpha_\ell)$
is a sequence of positive integers with $1 \leq \alpha_1 < \alpha_2 < \cdots < \alpha_\ell \leq k$,
and assume  $\underline{\alpha'} = (\alpha_1', \dots, \alpha_\ell')$
is another such sequence.  Then we say $\underline \alpha < \underline \alpha'$
if $\alpha_i = \alpha_i'$ for $i < s$, but $\alpha_s < \alpha_s'$. 
When the $\alpha_i$ in such a sequence $\underline \alpha$ correspond to positions of the 
vertices which are 
the left ends of the horizontal edges in the bottom row of a $k$-diagram, then we
say $\underline \alpha$ is the left-end sequence of the bottom row.

Suppose there exists a nonzero  $y = \sum_{d \in \mathfrak{M}_k} a_{d} d \in \ker \pi_k$.
Choose  a diagram $d'$ so  that
\begin{itemize}
\item[{\rm (i)}]   $a_{d'} \neq 0$; 
\item[{\rm (ii)}]  among the diagrams satisfying (i),  $d'$ has a maximum number, say $m$,  of vertical edges; 
\item[{\rm (iii)}] among the diagrams satisfying (i) and (ii), $d'$ has a maximum
number, say $\ell$,  of horizontal edges in its bottom row;
\item[{\rm (iv)}] among the diagrams satisfying (i), (ii), and (iii), $d'$ has the minimal
left-end sequence, say $\underline \alpha$,   in its bottom row.  
\end{itemize}

Let $u$ in $\VV^{\otimes k}$ be the simple tensor having $v_{-1}$ as the factor
in the positions in $\underline \alpha$; \  $v_0$ in the positions corresponding
to isolated vertices in the bottom
row of $d'$;  and $v_1$ in all the other tensor slots. 

Now suppose $d''$ satisfies $a_{d''} \neq 0$ and has $d''u \neq 0$.   Consider the edges
in the bottom row of  $d''$ which match up with the $\ell+m$  $v_1$'s in $u$.  There can be at most $\ell$
such horizontal edges in $d''$, as there are  $\ell$ factors $v_{-1}$ in $u$.  
There can be at most $m$ vertical edges in $d''$ by (ii).  But for $d''u$ to
be nonzero, there must be an edge in the bottom row of $d''$ in the same positions
as the $\ell+m$  $v_1$'s in $u$.   So there must be exactly $\ell$ horizontal and $m$
vertical edges in the bottom row of $d''$ which meet the $\ell+m$ factors  $v_1$  in $u$.   
This accounts for all the horizontal edges in the bottom row of $d''$ by (iii).    

Assume  $\underline{\alpha'} = (\alpha_1', \dots, \alpha'_\ell)$, where
$\alpha_1' < \cdots < \alpha'_\ell$, 
is the left-end sequence of the bottom row of $d''$.      The first horizontal edge in $d''$
has one of its endpoints at $\alpha_1$.  Thus,  $\alpha_1' \leq \alpha_1$, and
by the minimality of $\underline \alpha$,  it must be that $\alpha_1' = \alpha_1$.
The next  horizontal edge in $d''$ has one of its endpoints at $\alpha_2$, and
again by minimality, $\alpha_2' = \alpha_2$.   Proceeding in this way, we 
determine that $\underline \alpha' = \underline \alpha$.   

For $j = 1,\dots, \ell$,  adding the subscripts of the factors of $u$ occurring in slots $\alpha_j, \alpha_j+1, \dots$,
we see by the  planar property of diagrams,  that the subscript sum first becomes 0
at the right-hand endpoint of the horizontal edge in $d'$ (and also $d''$) with left endpoint at 
$\alpha_j$.   At the corresponding tensor slot in $u$,  there must be the vector $v_1$.    The other $v_1$'s in
$u$ occur where $d'$ has a vertex in its bottom row with a vertical edge emanating
from it.   But since $d'' u \neq 0$, $d''$ must have a vertical edge in its bottom row
at those positions also.    By the maximality of the number of vertical edges
in the bottom row of $d'$,  this forces $d''$ to have exactly the same bottom row as $d'$.  

Now consider all the diagrams having a nonzero coefficient in $y$ and having precisely
the same bottom row as $d'$.    Look at  the simple tensors produced by applying those
diagrams to $u$, and choose one of them, call it  $t$, with a maximal number, say $n$, of $v_{-1}$'s and
having them   in positions $\gamma_1 < \cdots < \gamma_n$,
with $\underline \gamma = (\gamma_1, \dots, \gamma_n)$ minimal.  The top
row of any diagram which produces $t$ when applied to $u$ must have exactly
$m$ vertical edges, and $t$ must have $v_1$'s in the corresponding positions.  Arguing
as we have done with the bottom rows, we see that the top row of
any such diagram producing $t$ with nonzero coefficient when it is applied to $u$  is uniquely determined.
Hence there is a unique  diagram $\tilde d$ with $a_{\tilde d} \neq 0$ which
produces the simple tensor $t$ when $y$ is applied to $u$.   But since
$y$ is in $\ker \pi_k$,  and $t$ occurs in $y u = 0$ with coefficient $a_{\tilde d}$, 
we force $a_{\tilde d} = 0$, contrary to assumption.
This shows that $\ker \pi_k = 0$.       Thus, we have the following:

\begin{thm}\label{thm:main}  The representation $\pi_k:  \Motz \rightarrow 
\End_{\uqsl}(\VV^{\otimes k})$ is faithful.   Hence $\End_{\uqsl}(\VV^{\otimes k})
\cong \Motz$.  \end{thm}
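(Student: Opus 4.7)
The plan is to prove injectivity of $\pi_k$ directly, since we already know from Theorem \ref{thm:commutingact} that the image lies in $\End_{\uqsl}(\VV^{\otimes k})$, and by equation \eqref{eq:evid} together with equation \eqref{eq:sumofsqs} we have $\dim_\KK \M_k(\zq) = \mathcal{M}_{2k} = \dim_\KK \End_{\uqsl}(\VV^{\otimes k})$, so once $\pi_k$ is shown to be injective it must be an isomorphism. The strategy to establish $\ker \pi_k = 0$ is to exhibit, for any nonzero $y = \sum_{d} a_d d$ in the kernel, a simple tensor $u \in \VV^{\otimes k}$ and a distinguished simple tensor $t$ in the expansion of $yu$ whose coefficient is a single $a_{d'}$ (not a sum with possible cancellations), thereby forcing $a_{d'} = 0$ and producing a contradiction.

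To pin down this distinguished $d'$, I would first impose an extremality ordering on the diagrams of $y$ with nonzero coefficient: among those, first select the maximum rank $m$ (number of vertical edges); among those, select the maximum number $\ell$ of horizontal edges in the bottom row; and among those, select the lexicographically minimal sequence $\underline\alpha = (\alpha_1 < \cdots < \alpha_\ell)$ of left-ends of those bottom horizontal edges. Then I define $u$ to place $v_{-1}$ in positions indexed by $\underline\alpha$, $v_0$ in the positions of isolated bottom vertices of $d'$, and $v_1$ in all other positions, so that there are exactly $\ell + m$ copies of $v_1$ available to be "absorbed" by the $\ell$ horizontal and $m$ vertical edges in the bottom of any competing diagram $d''$ with $d''u \neq 0$.

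The heart of the argument is the bottom-row matching step: I would use the diagram action formulas from \eqref{ActionOnTensorSpace} — in particular the facts that horizontal edges contribute $\langle\cdot,\cdot\rangleb$ values nonzero only on the pairs $(v_{-1},v_1),(v_0,v_0),(v_1,v_{-1})$, that vertical edges preserve labels, and that isolated bottom vertices force label $v_0$ — to conclude that any $d''$ contributing to $yu$ must have exactly $\ell$ horizontal and $m$ vertical edges in its bottom row, matching the $v_1$ positions of $u$. Minimality of $\underline\alpha$, together with the planarity condition and a simple induction on $j$ pairing off $\alpha_j$ as the leftmost endpoint of a bottom horizontal edge of $d''$, then forces $d''$ to have the same bottom row as $d'$. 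Next, among the resulting simple tensors $d''u$, I select one, $t$, with a maximal number of $v_{-1}$'s placed at a lexicographically minimal set of positions in the output, and repeat the same argument on the top row to conclude that only $d' = d''$ can produce $t$ with nonzero coefficient in $yu$.

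The main obstacle, and the subtlest part of this plan, is handling the horizontal edges correctly: the product of weights $\langle v_a, v_b \rangleb$ and $\langle v_a, v_b \ranglet$ introduces signs and powers of $\qq^{\pm 1/2}$, and one must verify that the coefficient of the signature tensor $t$ in $d'u$ is a unit in $\KK$ (so that the equation "coefficient of $t$ in $yu$ equals $a_{d'} \cdot (\text{unit})$" really forces $a_{d'}=0$). This follows because along the bottom row the labels of $u$ are exactly $(v_{-1}, v_1)$ at the paired positions of each horizontal edge, yielding $\langle v_{-1},v_1\rangleb = -\qq^{-1/2} \neq 0$, and the analogous top-row contribution from \eqref{eq:biform} is also a unit; propagation of vertical edges contributes $1$; and isolated vertices contribute $\delta_{a,0}$ which is satisfied by our choice of $v_0$ in those slots. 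With the signature coefficient a unit, extremality closes the argument and gives $\ker \pi_k = 0$, completing the proof.
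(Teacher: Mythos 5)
Your proposal is correct and follows essentially the same route as the paper's proof: the identical three-stage extremality order (maximal vertical-edge count, then maximal bottom horizontal-edge count, then lexicographically minimal left-end sequence), the same choice of the probe tensor $u$, the same bottom-row matching via edge-counting and the planarity/partial-sum argument, and the same second pass on the top row to isolate a single coefficient $a_{\tilde d}$. The only small slip is the phrase ``only $d' = d''$ can produce $t$'' --- the uniquely determined diagram $\tilde d$ need not equal $d'$, but since only its uniqueness is used, the argument is unaffected; your added observation that the surviving coefficient is a unit (a product of $\pm\qq^{\pm 1/2}$) is a correct and worthwhile point that the paper leaves implicit.
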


The second assertion in the theorem follows from the faithfulness of $\pi_k$
and the fact that these two algebras have the same dimension. 
\medskip

\begin{remark} {\rm It is also possible to derive Theorem \ref{thm:main}, with some work, from the canonical bases of Frenkel and Khovanov \cite{FK}.  However, in  this approach, the identity diagram in the Motzkin algebra, which acts as the identity transformation on tensor space,  corresponds to a sum of canonical basis elements, and a transition matrix is required to get the correspondence between the bases.}\end{remark}

\begin{remark} {\rm It follows from the double centralizer theory that 
$$\End_{\Motz}(\VV^{\otimes k})
\cong  \rho_{\VV^{\otimes k}}(\uqsl),$$ which is the {\it Schur algebra} in this setting. 
For simplicity we denote the semisimple algebra $\rho_{\VV^{\otimes k}}(\uqsl)$ by $\mathcal{S}(\VV^{\otimes k})$  and observe that
$$\dim \left( \mathcal{S}(\VV^{\otimes k})\right) = \sum_{r=0}^k  (r+1)^2 = \frac{1}{6}(k+1)(k+2)(2k+3).$$
If  $\mathcal{S}(\VV(1)^{\otimes k})= \rho_{\VV(1)^{\otimes k}}(\uqsl)$, the subalgebra 
of $\End(\VV(1)^{\otimes k})$ generated by
the representing transformations of $\uqsl$ on $\VV(1)^{\otimes k}$ (that is, the Schur algebra
for $\VV(1)^{\otimes k}$), then it follows
from the Clebsch-Gordan formula \eqref{eq:CG2}  that 
$$\dim \left( \mathcal{S}(\VV^{\otimes k})\right) = \dim \left(\mathcal{S}(\VV(1)^{\otimes k}) \right) +  \dim \left( \mathcal{S}(\VV(1)^{\otimes k-1})\right). $$} 
\end{remark}
\end{subsection}  

\begin{subsection} {An explicit decomposition of $\VV^{\otimes k}$ into irreducible  
$\uqsl$-modules.}  \label{subsec:decomp}
 
Recall the notion of a Motzkin path of length $k$ as in  Section \ref{subsec:MotzkinPaths}.
  To each such path $p\in  \mathcal P_k$,  we associate a simple tensor $u_p$ as follows.
Suppose that the pairs $(\alpha_i, \beta_i)$ for $1 \leq i \leq \ell$  denote the left-hand and right-hand endpoints   of the horizontal edges in $p$,  where $\alpha_1 < \dots < \alpha_\ell$.   Construct the
simple tensor $u_p$  having 
$v_{-1}$ at the positions $\alpha_i$, $v_1$ at the positions $\beta_i$ and also at  the positions of the white vertices in $p$, and $v_0$ at all other positions.

 Now for each path $p$,  let $d_p^p$ be the symmetric diagram in $\Motz$ 
 determined by $p$  as in Section \ref{subsec:MotzkinPaths},   and set 
 $$w_p = d_p^p u_p.$$
 We make a number of claims about the vectors $w_p$. 
 \medskip
 
\noindent \textbf{Claim 1.}   $w_p \neq 0$.   
\medskip
If $p$ has $n$ horizontal edges,  then in  $w_p$ the simple tensor $u_p$ occurs with coefficient 
equal to 
$$\langle v_{-1}, v_1 \rangle_{\mathsf b}^n \langle v_{-1}, v_1 \rangle_{\mathsf t}^n = (-\qq^{-1/2})^n (\qq^{-1/2})^n= (-\qq^{-1})^n \neq 0.$$

 \medskip
 
\noindent \textbf{Claim 2.}   $K_i u_p = \qq^{k_i} u_p$ and $K_i w_p = \qq^{k_i} w_p$ for $i=1,2$,  where  $k_1$ is the number of $v_1$ in $u_p$ and $k_2$ is the number of $v_{-1}$ in $u_p$. 
Hence, $K u_p =  \qq^{\rank(p)}u_p$ and $K w_p = \qq^{\rank(p)} w_p$, where $\rank(p)$ is the number of white vertices
in $p$.  

\medskip  That $w_p$ and $u_p$ have the same weights relative to the $K_i$  follows from the fact that
the diagram $d_p^p$ lives in the centralizer algebra of the $\uqsl$-action, hence commutes with $K_i$.      The rest is clear from the fact that $K_i$ acts on
$\VV^{\otimes k}$ by $K_i \otimes \cdots \otimes K_i$. \medskip  

\noindent \textbf{Claim 3.}  $\{w_p \mid p \in \mathcal P_k\}$ is a linearly independent set.
\medskip

Since we are assuming $\qq$ is not a root of unity, and since vectors of different weights are linearly independent, we may assume
that we have a nontrivial dependence relation  $\sum_{p'} a_{p'} w_{p'} = 0$, where 
the sum ranges over the paths $p'$ of rank $r$.   

Now suppose among the terms occurring in this sum with nonzero coefficient,  $p$ is chosen so that it  has a maximal number of horizontal
edges, say $\ell$, and minimal left-end sequence 
$\underline \alpha = (\alpha_1, \dots, \alpha_\ell)$,
where $\alpha_1 < \cdots < \alpha_\ell$.       Suppose $p'$ is another path with $a_{p'} \neq 0$
such that $u_p$ occurs in $w_{p'}$ with nonzero coefficient.  In each tensor summand of $w_{p'}$ the vector $v_{-1}$ occurs only in positions which correspond to some horizontal edge of $p'$.  
Let $\underline \alpha'$ be the left-end sequence of $p'$.   The positions $\alpha_j$ with the $v_{-1}$  in $u_p$, must correspond to some horizontal edge endpoint  in $p'$.  Thus, $p'$ also has $\ell$ horizontal edges by the maximality of $\ell$.   Moreover, $\alpha_1' \leq \alpha_1$ has to hold, and  by the minimality of $\underline \alpha$,  we must have $\alpha_1' = \alpha_1$.  Proceeding in order with the  $\alpha_j$, we determine that $\underline \alpha' = \underline \alpha$.   

Now $w_{p'} = d_{p'}^{p'}u_{p'}$ has $3^\ell$ summands, but only one of them has
the vector $v_{-1}$ in all the positions corresponding to $\underline \alpha$, namely
$u_{p'}$.   Since we are assuming $u_p$ occurs in $w_{p'}$ with nonzero coefficient,
and since it has the same property,  $u_{p'} = u_p$.    We claim this forces 
$p' = p$.   Indeed, $p$ has a horizontal edge with left-end  position $\alpha_j$.  Starting at slot $\alpha_j$ and  summing the subscripts on the tensor factors of $u_p = u_{p'}$, the  first place where the subscript sum is 0 must be right-hand end position $\beta_j$ of the corresponding horizontal edge for each $j=1,\dots, \ell$.   The other positions where $v_1$ occurs correspond to white vertices of $p$
and $p'$, and the positions of the
 $v_0$ factors  of $u_p = u_{p'}$ are the locations of the isolated black vertices in $p$ and $p'$.  Hence
 $p' = p$, and therefore $u_p$ occurs in $\sum_{p'} a_{p'} w_{p'}$ exactly once, with coefficient
 $(1-\qq-\qq^{-1})^\ell a_p$, which implies $a_p = 0$.  We have reached a contradiction,
 so no such nontrivial dependence relation can occur.   
 \medskip
 
 \noindent \textbf{Claim 4.}   $w_p$ is a highest weight vector for $\uqsl$
 of weight $\qq^{k_i}$ relative to $K_i$,  where the values of $k_i$  are as in Claim 2.  \medskip
 
We know already that $w_p$ is a weight vector for the $K_i$ of the appropriate weight.
We need to argue it is annihilated by $E$.    Using the expression for the coproduct, we see that $E$
acts on $\VV^{\otimes k}$ by 
$$\sum_{j=0}^{k-1}  \hbox{\rm id}_{\VV}^{\otimes j} \otimes E \otimes K^{\otimes (k-1-j)}.$$

\noindent  Since $Ew_p = Ed_p^p u_p = d_p^p Eu_p$, we first consider the vector
$Eu_p$.     Now a term in the above sum has a nonzero action on $u_p$ only when
$E$ acts on a tensor slot containing the vector $v_{-1}$, which is a position in $p$ 
that is a left-end node of a horizontal  edge.   It changes $v_{-1}$ to $v_1$ at that
position.  But then when $d_p^p$ acts on the changed tensor, it finds $v_1$ at
both ends of a horizontal edge of $p$, and so gives 0 since $\langle v_1, v_1 \rangle_{\mathsf b} = 0$.    
Thus, $Ew_p = 0$.

 \medskip
 
  \noindent \textbf{Claim 5.}  $\uqsl w_p$ is an irreducible $\uqsl$-module of dimension $\rank(p) + 1$.
  \medskip
  
 From Claims 1,2,4, we know that when  $\uqsl w_p$ is viewed as a module
 for the subalgebra $\mathsf{U}_\qq(\mathfrak{sl}_{\mathsf{2}})$ it is  a nonzero homomorphic
 image of the Verma module for $\mathsf{U}_\qq(\mathfrak{sl}_{\mathsf{2}})$ with highest weight 
 $\qq^{\rank(p)}$ relative to $K$.   But the only quotients of that Verma module
 are $0$, the whole Verma module, and the irreducible $\mathsf{U}_\qq(\mathfrak{sl}_{\mathsf{2}})$-module of 
 dimension $\rank(p) + 1$ (see \cite[Prop.~2.5]{Ja}).    Since  $\uqsl w_p \subseteq \VV^{\otimes k}$  is finite dimensional and nonzero, it is the desired irreducible quotient.
 
 \medskip
 
  \noindent \textbf{Claim 6.}  The sum $\sum_{p', \rank(p') = r}   \uqsl w_{p'}$
  is direct.    \medskip
  
  If not,  one of the summands, say $\uqsl w_{p}$,  intersects the sum of the
  others nontrivially, so by irreducibility, it must be contained in the sum of the others.    
  The number of linearly independent highest weight vectors of weight
  $\qq^r$ relative to $K$  is exactly $| \mathcal P^r_k |$, the number of Motzkin paths of length $k$ and rank $r$.
    Since $w_p$ is in $\sum_{p' \neq p, \rank(p') = r}  \uqsl w_{p'}$,
  that would force it to be a linear combination of the $w_{p'}$, $p' \neq p$, which
  is impossible by Claim 3.   \medskip
  
 As a consequence of these claims we have the following result:
 
 \begin{thm} \label{thm:tensordecomp}
  $\VV^{\otimes k} = \bigoplus_{p \in \mathcal P_k}  \uqsl w_p$
 is a decomposition of $\VV^{\otimes k}$ into irreducible $\uqsl$-modules.
 \end{thm}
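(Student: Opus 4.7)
The proof is essentially an assembly of the six claims established just above the statement, together with a dimension count. My plan is as follows.

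First, I will invoke Claim 5 to conclude that each summand $\uqsl w_p$ is an irreducible $\uqsl$-module of dimension $\rank(p)+1$, hence isomorphic to $\VV(\rank(p))$. Next, I will verify that the sum $\sum_{p \in \mathcal P_k} \uqsl w_p$ is actually direct. For paths of a common rank $r$, this is precisely Claim 6. To pass from fixed rank to arbitrary rank, I will note that if $p, p' \in \mathcal P_k$ satisfy $\rank(p) \neq \rank(p')$, then $\uqsl w_p$ and $\uqsl w_{p'}$ are non-isomorphic irreducible modules (different highest weights under $K$ by Claim 2), so decomposing by rank
\[
\sum_{p \in \mathcal P_k} \uqsl w_p \;=\; \sum_{r=0}^{k}\Bigl(\,\sum_{p \in \mathcal P_k^r} \uqsl w_p\Bigr)
\]
is a sum of isotypic components, which is automatically direct. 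Combined with Claim 6, the whole sum is direct.

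The final step is a dimension count to promote the direct sum inclusion to an equality. Using Claim 5 and $|\mathcal P_k^r| = \mathsf{m}_{k,r}$, I will compute
\[
\dim \bigoplus_{p \in \mathcal P_k} \uqsl w_p \;=\; \sum_{r=0}^{k} \sum_{p \in \mathcal P_k^r}(\rank(p)+1) \;=\; \sum_{r=0}^{k}(r+1)\, \mathsf{m}_{k,r} \;=\; 3^k,
\]
where the final equality is the identity \eqref{eq:bimoduleidentity}. Since $\dim \VV^{\otimes k} = 3^k$, the inclusion $\bigoplus_{p \in \mathcal P_k} \uqsl w_p \subseteq \VV^{\otimes k}$ must be an equality, completing the proof.

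The main (mild) obstacle is really just the bookkeeping in the second step: one must be careful that directness across distinct ranks is genuinely free, but this is immediate from distinctness of $K$-weights on highest weight vectors of different dimensional irreducibles. All the substantive work — nonvanishing, weight computation, linear independence of highest weight vectors, the identification of $\uqsl w_p$ as an irreducible of the right dimension, and directness within a single rank — has already been carried out in Claims 1–6.
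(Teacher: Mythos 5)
Your proposal is correct and spells out precisely the assembly that the paper leaves implicit when it states the theorem ``as a consequence of these claims'': Claim 5 identifies each $\uqsl w_p$ as $\VV(\rank(p))$, Claim 6 gives directness within each rank, directness across ranks follows from the distinct highest $K$-weights (Claim 2), and the dimension count via \eqref{eq:bimoduleidentity} upgrades the inclusion to equality. This is the same argument the paper intends; the only thing you have added is making explicit the routine observation that submodules whose simple constituents lie in pairwise non-isomorphic classes sum directly, which is exactly the right way to pass from the fixed-rank Claim 6 to the full statement.
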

 
 \begin{cor}  $\bigoplus_{p \in \mathcal P_k^r}  \uqsl w_p$ is an irreducible
 $\big(\uqsl,  \Motz\big)$-bimodule under the action,  
 $(a \times d).(b.w_p) =  ab. w_{d\cdot p}$ for $a,b \in  \uqsl$ and 
$d \in \mathfrak{M}_k$, where $d\cdot p$ is as in (\ref{ActionOnPaths2}) below. \end{cor}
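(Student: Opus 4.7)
The plan is to recognize $\bigoplus_{p \in \mathcal P_k^r} \uqsl w_p$ as the $\VV(r)$-isotypic component of $\VV^{\otimes k}$ and then invoke the double centralizer theorem. By Claim 5, each summand $\uqsl w_p$ with $p \in \mathcal P_k^r$ is irreducible and isomorphic to $\VV(r)$, and by Claim 6 the sum is direct. Combined with Theorem \ref{thm:tensordecomp}, this identifies $\bigoplus_{p \in \mathcal P_k^r} \uqsl w_p$ with the full $\VV(r)$-isotypic summand of $\VV^{\otimes k}$.

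By Theorem \ref{thm:commutingact}, the $\Motz$-action on $\VV^{\otimes k}$ preserves each $\uqsl$-isotypic component, and by Theorem \ref{thm:main}, $\Motz \cong \End_{\uqsl}(\VV^{\otimes k})$. Since $\uqsl$ acts semisimply on $\VV^{\otimes k}$, standard double-centralizer theory (cf.\ \cite[Secs.~3B, 68]{CR}) yields a $(\uqsl, \Motz)$-bimodule decomposition
$$
\VV^{\otimes k} \cong \bigoplus_{r=0}^k \VV(r) \otimes \M_k^{(r)},
$$
in which each summand $\VV(r) \otimes \M_k^{(r)}$ is irreducible as a bimodule, and the $\M_k^{(r)}$ are the irreducible $\Motz$-modules from Section~3.2. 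Since the $\VV(r)$-isotypic component coincides with this irreducible summand, the bimodule irreducibility of $\bigoplus_{p \in \mathcal P_k^r} \uqsl w_p$ follows.

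It remains to check that the prescribed formula $(a \times d).(b.w_p) = ab.w_{d\cdot p}$ genuinely describes the bimodule action inherited from $\VV^{\otimes k}$. Since $w_p$ is a highest weight vector (Claim 4) and the $\uqsl$-action commutes with $\Motz$, every vector in $\uqsl w_p$ has the form $b.w_p$ for some $b \in \uqsl$, and we have $d.(b.w_p) = b.(d.w_p)$. The task therefore reduces to showing that $d.w_p$ is a scalar multiple of $w_{d\cdot p}$, with the scalar and the path $d \cdot p$ being exactly those encoded by the combinatorial definition in (\ref{ActionOnPaths2}). Granting this identification, the displayed formula gives a well-defined bimodule action that restricts to the irreducible bimodule structure on the $\VV(r)$-isotypic component.

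The principal obstacle is the last step: tracking how each diagram $d$ sends the highest weight vector $w_p = d_p^p u_p$ to a scalar multiple of $w_{d\cdot p}$. Horizontal edges, loops, or rank-lowering configurations in the product $d\, d_p^p$ may annihilate $w_p$ or introduce factors of $\zq$, and the definition of $d\cdot p$ must faithfully mirror these effects. However, this reduces to a diagrammatic case analysis parallel to the one appearing in the proof that $\pi_k$ is an algebra representation, and uses no new ideas beyond those already employed in the preceding claims establishing that the $w_p$ are linearly independent highest weight vectors.
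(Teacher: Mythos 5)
Your framework — identifying $\bigoplus_{p\in\mathcal P_k^r}\uqsl w_p$ as the $\VV(r)$-isotypic component and invoking double centralizer theory — correctly yields that this subspace with its \emph{inherited} $\Motz$-action is an irreducible bimodule. But the step you defer to the end, namely that the declared formula $(a\times d).(b.w_p)=ab.w_{d\cdot p}$ reproduces the inherited action, is not a routine diagram check: it is false. Take $k=2$, $p=(0,0)$, $d=\mathsf t$. Then $w_{(0,0)}=v_0\otimes v_0$, and \eqref{ActionOnPaths2} gives $\mathsf t\cdot(0,0)=(1,-1)$ with $\kappa=0$, so the declared action sends $w_{(0,0)}\mapsto w_{(1,-1)}$. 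But the inherited action gives
$T(v_0\otimes v_0)=\qq^{-1/2}v_{-1}\otimes v_1+v_0\otimes v_0-\qq^{1/2}v_1\otimes v_{-1}=-\qq^{1/2}\,w_{(1,-1)}$,
since $w_{(1,-1)}=T(v_{-1}\otimes v_1)=\langle v_{-1},v_1\rangle_{\mathsf b}\bigl(\qq^{-1/2}v_{-1}\otimes v_1+v_0\otimes v_0-\qq^{1/2}v_1\otimes v_{-1}\bigr)$. More generally, when $d\cdot p=x^m q$ one finds $d.w_p=(-\qq^{-1/2})^{\ell_p-\ell_q}x^m w_q$, where $\ell_p,\ell_q$ are the numbers of horizontal edges in $p,q$; the discrepancy comes from the $\langle\cdot,\cdot\rangle_{\mathsf b}$-factors absorbed by the bottom row of $d_p^p$ versus $d_q^q$. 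So the declared and inherited actions are genuinely different operators, and a calculation ``parallel to the proof that $\pi_k$ is a representation'' would reveal the mismatch rather than confirm agreement.

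The corollary is nevertheless correct, and your isotypic-component picture supplies most of what is needed, but the endgame should not try to equate the two actions. Instead: by Claim 3 the map $p\mapsto w_p$ is injective, so the declared right action equips $\mathrm{span}_\KK\{w_p:p\in\mathcal P_k^r\}$ with precisely the $\Motz$-module structure of the cell module $\C_k^{(r)}$ from Section~\ref{subsec: motzact}, by construction. Since $\Motz\cong\End_{\uqsl}(\VV^{\otimes k})$ is semisimple, Theorem~\ref{thm:irredmod} makes $\C_k^{(r)}$ irreducible; Claim 5 makes each $\uqsl w_p\cong\VV(r)$ irreducible; and the two actions visibly commute in the declared formula. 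Hence the bimodule is $\VV(r)\otimes\C_k^{(r)}$, which is irreducible. (The declared and inherited bimodule structures are isomorphic, e.g.\ via $w_p\mapsto(-\qq^{-1/2})^{\ell_p}w_p$, but the isomorphism is nontrivial.)
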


\end{subsection}

\end{section}

 \begin{section}{The Action on Motzkin Paths and Cellularity}\label{sec:Cellularity}
 
In this section,  we define a representation of the Motzkin algebra $\M_k(x)$ on Motzkin paths.  This action is graded by the rank of the path, and so by taking quotients, we produce, in the case that $x$ is chosen so that $\M_k(x)$ is a semisimple algebra,  a complete set of irreducible modules for the Motzkin algebra.  We show that the Motzkin algebra is cellular in the sense of Graham and Lehrer \cite{GL} and that our Motzkin path modules are isomorphic to the (left) cell modules.

 \subsection{The action of $\M_k(x)$ on Motzkin paths} \label{subsec: motzact}

Let $\KK$ denote a commutative ring with 1, and let $\M_k(x)$ be the Motzkin
algebra over $\KK$.  Thus, $\M_k(x)$ is a free $\KK$-module with basis the
Motzkin $k$-diagrams in $\mathfrak{M}_k$, and $x$ is assumed to be an element of $\KK$.

Recall from Section \ref{subsec:MotzkinPaths} that $\cP_k$ is the set of Motzkin paths, or 1-factors, of length $k$ and that $\cP_k^r \subseteq \cP_k$ is the subset of Motzkin paths of length $k$ and rank $r$.  We define an action of 
a Motzkin diagram $d \in \M_k(x)$ on a Motzkin path $p \in \cP_k$ as follows:
\begin{enumerate}

\item  Place  $d$ above  $p$ and connect the vertices in the bottom row of $d$ with the vertices in $p$ to create a new Motzkin diagram $dp$.  

\item Color the vertices in the top row of $dp$  so that vertices connected by an edge have the same color. 

\item  Let $q$ be the 1-factor that is formed from the top row of $dp$; the planarity of $d$ ensures that $q$ is a Motzkin 1-factor.

\item  Let $\kappa(d,p)$ equal the number of closed loops formed in the bottom row of $dp$.  Then set
\begin{equation}\label{ActionOnPaths}
dp = x^{\kappa(d,p)} q.
\end{equation}
\end{enumerate}
For example if $d$ and $p$ are given by
\begin{align*}  d & ={ \beginpicture
 \setcoordinatesystem units <0.5cm,0.3cm>        
\setplotarea x from .5 to 20, y from -2 to 2
\plot 3 2 1 -1 /
\plot 7 2 8 -1 /
\plot 10 2 9 -1 /
\plot 11 2 13 -1 /
\plot 12 2 14 -1 /
\plot 13 2 16 -1 /
\plot 17 2 17 -1 /
\plot 19 2 18 -1 /
\plot 20 2 19 -1 /
\setquadratic
\plot 2 -1 2.5 0 3 -1 /
\plot 4 -1 4.5 0 5 -1 /
\plot 6 -1 6.5 0 7 -1 /
\plot 1 2 1.5 1 2 2 /
\plot 4 2 5 1 6 2 /
\plot 8 2   8.5 1.0   9 2 /
\plot 14 2 15 1 16 2 /
\plot 10 -1  10.5 0   11 -1  /
\setlinear
\put{$\bullet$} at 1 2  \put{$\bullet$} at 2 2 \put{$\bullet$} at 3 2  \put{$\bullet$} at 4 2 
\put{$\bullet$} at 5 2  \put{$\bullet$} at 6 2  \put{$\bullet$} at 7 2  \put{$\bullet$} at 8 2 
\put{$\bullet$} at 9 2  \put{$\bullet$} at 10 2  \put{$\bullet$} at 11 2  \put{$\bullet$} at 12 2 
\put{$\bullet$} at 13 2  \put{$\bullet$} at 14 2  \put{$\bullet$} at 15 2 
\put{$\bullet$} at 16 2 \put{$\bullet$} at 17 2  \put{$\bullet$} at 18 2  \put{$\bullet$} at 19 2 
\put{$\bullet$} at 20 2 
\put{$\bullet$} at 1 -1 
\put{$\bullet$} at 2 -1
\put{$\bullet$} at 3 -1 
\put{$\bullet$} at 4 -1 
\put{$\bullet$} at 5 -1 
\put{$\bullet$} at 6 -1 
\put{$\bullet$} at 7 -1 
\put{$\bullet$} at 8 -1 
\put{$\bullet$} at 9 -1 
\put{$\bullet$} at 10 -1 
\put{$\bullet$} at 11 -1 
\put{$\bullet$} at 12 -1 
\put{$\bullet$} at 13 -1 
\put{$\bullet$} at 14 -1 
\put{$\bullet$} at 15 -1 
\put{$\bullet$} at 16 -1 
\put{$\bullet$} at 17 -1 
\put{$\bullet$} at 18 -1 
\put{$\bullet$} at 19 -1 
\put{$\bullet$} at 20 -1
\endpicture}, \\
 p & = {\beginpicture
 \setcoordinatesystem units <0.5cm,0.2cm>        
\setplotarea x from .5 to 20.5, y from -1 to 1
\setquadratic
\plot 3 .5      3.5  -1       4 .5 /
\plot 5 .5      5.5  -1       6 .5 /
\plot 9 .5      9.5  -1       10 .5 /
\plot 16 .5     16.5  -1       17 .5 /
\plot 2 .5      4.5  -2       7 .5 /
\plot 14 .5      16  -2       18 .5 /
\put{$\redbull $} at 1 .5 
\put{$\bullet$} at 2 .5 
\put{$\bullet$} at 3 .5
\put{$\bullet$} at 4 .5 
\put{$\bullet$} at 5 .5
\put{$\bullet$} at 6 .5 
\put{$\bullet$} at 7 .5 
\put{$\bullet$} at 8 .5  
\put{$\bullet$} at 9 .5 
\put{$\bullet$} at 10 .5 
\put{$\redbull $} at 11 .5
\put{$\bullet$} at 12 .5
\put{$\redbull$} at 13 .5 
\put{$\bullet$} at 14 .5
\put{$\bullet$} at 15 .5
\put{$\bullet$} at 16 .5 
\put{$\bullet$} at 17 .5 
\put{$\bullet$} at 18 .5 
\put{$\bullet$} at 19 .5 
\put{$\redbull$} at 20 .5
\endpicture},
\end{align*}
then 
$$
dp = 
{ \beginpicture
 \setcoordinatesystem units <0.5cm,0.3cm>        
\setplotarea x from .5 to 20, y from -2 to 2
\plot 3 2 1 -1 /
\plot 7 2 8 -1 /
\plot 10 2 9 -1 /
\plot 11 2 13 -1 /
\plot 12 2 14 -1 /
\plot 13 2 16 -1 /
\plot 17 2 17 -1 /
\plot 19 2 18 -1 /
\plot 20 2 19 -1 /
\setquadratic
\plot 2 -1 2.5 0 3 -1 /
\plot 4 -1 4.5 0 5 -1 /
\plot 6 -1 6.5 0 7 -1 /
\plot 1 2 1.5 1 2 2 /
\plot 4 2 5 1 6 2 /
\plot 8 2   8.5 1.0   9 2 /
\plot 14 2 15 1 16 2 /
\plot 10 -1  10.5 0   11 -1  /
\plot 3 -1     3.5  -1.75       4 -1 /
\plot 5 -1      5.5  -1.75       6 -1 /
\plot 9 -1     9.5  -1.75       10 -1 /
\plot 16 -1     16.5  -1.75       17 -1 /
\plot 2 -1      4.5  -2.3       7 -1 /
\plot 14 -1      16  -2.3       18 -1 /
\setlinear
\put{$\bullet$} at 1 2  \put{$\bullet$} at 2 2 \put{$\redbull$} at 3 2  \put{$\bullet$} at 4 2 
\put{$\bullet$} at 5 2  \put{$\bullet$} at 6 2  \put{$\bullet$} at 7 2  \put{$\bullet$} at 8 2 
\put{$\bullet$} at 9 2  \put{$\redbull$} at 10 2  \put{$\redbull$} at 11 2  \put{$\bullet$} at 12 2 
\put{$\bullet$} at 13 2  \put{$\bullet$} at 14 2  \put{$\bullet$} at 15 2 
\put{$\bullet$} at 16 2 \put{$\bullet$} at 17 2  \put{$\bullet$} at 18 2  \put{$\bullet$} at 19 2 
\put{$\bullet$} at 20 2 
\put{$\redbull$} at 1 -1 
\put{$\bullet$} at 2 -1
\put{$\bullet$} at 3 -1 
\put{$\bullet$} at 4 -1 
\put{$\bullet$} at 5 -1 
\put{$\bullet$} at 6 -1 
\put{$\bullet$} at 7 -1 
\put{$\bullet$} at 8 -1 
\put{$\bullet$} at 9 -1 
\put{$\bullet$} at 10 -1 
\put{$\redbull$} at 11 -1 
\put{$\bullet$} at 12 -1 
\put{$\redbull$} at 13 -1 
\put{$\bullet$} at 14 -1 
\put{$\bullet$} at 15 -1 
\put{$\bullet$} at 16 -1 
\put{$\bullet$} at 17 -1 
\put{$\bullet$} at 18 -1 
\put{$\bullet$} at 19 -1 
\put{$\redbull$} at 20 -1
\endpicture}.
$$
There is one closed loop in the bottom row of $dp$, so we get $dp = x q$ where
$$
\quad q = { \beginpicture
 \setcoordinatesystem units <0.5cm,0.2cm>        
\setplotarea x from .5 to 20.5, y from -3 to 2
\setquadratic
\plot  1 .5 1.5 -1 2 .5 / 
\plot 4 .5 5 -1 6 .5 / 
\plot  8 .5  8.5  -1  9  .5  /
\plot 14 .5 15 -1 16 .5 / 
\plot 13 .5 15 -2 17 .5 /
\plot 12 .5 15.5 -3 19 .5 / 
\put{$\redbull$} at 3 .5
\put{$\bullet$} at 7 .5
\put{$\bullet $} at 9 .5
\put{$\redbull $} at 11 .5
\put{$\bullet $} at 20 .5
\put{$\bullet$} at 13 .5
\put{$\bullet$} at 17 .5
\put{$\bullet$} at 12 .5
\put{$\bullet$} at 19 .5
\put{$\bullet$} at 8 .5
\put{$\bullet$} at 1 .5
\put{$\bullet$} at 2 .5
\put{$\bullet$} at 4 .5
\put{$\bullet$} at 6 .5
\put{$\bullet$} at 14 .5
\put{$\bullet$} at 16 .5
\put{$\bullet$} at 5 .5
\put{$\redbull$} at 10 .5
\put{$\bullet$} at 15 .5
\put{$\bullet$} at 18 .5
\endpicture}.
$$

Let 
$\W_k$ be the free $\KK$-module with basis  the Motzkin paths in $\cP_k$.   The action of diagrams on paths defined in \eqref{ActionOnPaths} comes from the concatenation of diagrams, which is associative.  Furthermore, vertex colors are simply pulled along the edges in the diagram, so we have $(d_1 d_2)p = d_1(d_2p)$ for all Motzkin diagrams $d_1, d_2 \in \M_k(x)$ and all paths $p \in {\mathcal P}_k$.  This action extends linearly to an action of $\M_k(x)$ on $\W_k$ making $\W_k$ a module for $\M_k(x)$.

Recall from Section \ref{subsec:MotzkinPaths} the definition of the Motzkin diagram  $d_p^q \in \M_k(x)$ of rank $r$  associated with the pair $(p,q)$  of rank $r$ Motzkin paths.  We have
\begin{equation}\label{eq:Eigenvalue}
d_p^q p = x^{\varepsilon(p)} q, \qquad \hbox{ where $\varepsilon(p)$ is the number of edges in $p$.  }
\end{equation}
In particular, $p$ is an eigenvector of   $d_p^p$ with eigenvalue  $x^{\varepsilon(p)}$.
The action of a diagram $d \in \M_k(x)$ on a path $p \in \cP_k$ satisfies
\begin{equation}\label{rank.property}
\rank( dp ) \le \min\left( \rank(d), \rank(p) \right),
\end{equation}
since a white vertex in $p$ survives only if it passes along a vertical edge in $d$.  This means that
\begin{equation}\label{eq:Wkrdef} 
\W^{(r)}_k  = \mathsf{span}_\KK\left\{ \ p \in \cP_k  \  \big\vert \ \rank(p) \le r \  \right\}
\end{equation}
is a submodule of $\W_k$, and $\W_k^{(0)} \subseteq \W_k^{(1)} \subseteq \cdots \subseteq \W_k^{(k)} = \W_k$ is a filtration of the $\M_k(x)$-module $\W_k$.  Let 
\begin{equation}\label{eq:Cmoddef} \C_k^{(r)} = \W_k^{(r)}/\W_k^{(r-1)} \cong   \mathsf{span}_\KK \left\{ \ p \mid p  \in \cP_k^r \  \right\}, \end{equation} 
where $\cP_k^r  = \left\{ \ p \in \cP_k  \  \big\vert \ \rank(p) =  r \  \right\}$ as 
in \eqref{eq:rankrpaths}.   Under this isomorphism, if the Motzkin diagram $d \in \M_k(x)$ acts on a path $p \in \cP_k$ by
$dp = x^{\kappa(d,p)} q$, then 
(abusing coset notation) we write the action in $\C_k^{(r)}$ as follows:
\begin{equation}\label{ActionOnPaths2}
d\cdot p = 
\begin{cases}
x^{\kappa(d,p)} q, & \text{if $\rank(q) = \rank(p)$} \\
0, & \text{if $\rank(q) < \rank(p)$}.
\end{cases}
\end{equation}
Let $\C_0^{(0)} = \KK$ be the trivial module for $\M_0(x) = \KK 1$.

The next theorem can also be obtained from general results on cellular algebras, once
we establish, in Section \ref{subsec:cellular}, that the algebra $\M_k(x)$ is cellular.  Here we
give an elementary proof of this result without evoking that machinery.

\begin{thm}\label{thm:irredmod}  Assume the Motzkin algebra $\M_k(x)$ is defined over
a field $\KK$.   For $k \ge 0$ and $0 \le r \le k$, the module 
$\C_k^{(r)}$ is an indecomposable $\M_k(x)$-module under the action given in \eqref{ActionOnPaths2}.  When $x$ is chosen so that $\M_k(x)$ is
a semisimple $\KK$-algebra, then  $\{\ \C_k^{(r)} \ \vert \ 0 \le r \le k \ \}$ is a complete set of pairwise nonisomorphic irreducible $\M_k(x)$-modules.  
\end{thm}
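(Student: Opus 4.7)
The plan is to find a cyclic generator of $\C_k^{(r)}$ together with an idempotent projecting onto the line it spans, and then run the standard idempotent argument.  Let $p^*\in\cP_k^r$ be the Motzkin path with white vertices at positions $1,\dots,r$ and isolated black vertices at positions $r+1,\dots,k$, so that $p^*$ has no horizontal edges and $\varepsilon(p^*)=0$.  Equation \eqref{eq:Eigenvalue} then reads $d_{p^*}^{q}\cdot p^*=q$ for every $q\in\cP_k^r$; since $d_{p^*}^q$ has rank $r$, this identity descends to $\C_k^{(r)}$, and therefore $p^*$ cyclically generates $\C_k^{(r)}$ as an $\M_k(x)$-module.

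The companion idempotent is $e^*:=d_{p^*}^{p^*}=p_{r+1}p_{r+2}\cdots p_k\in\M_k(x)$, whose diagram consists of vertical edges at positions $1,\dots,r$ and isolated vertices at positions $r+1,\dots,k$.  A direct diagram chase using the action \eqref{ActionOnPaths2} yields the following dichotomy for $q\in\cP_k^r$: if some white vertex of $q$ sits at a position $>r$, then fewer than $r$ whites of $q$ lie in $\{1,\dots,r\}$, whence $\rank(e^*\cdot q)<r$ and $e^*\cdot q=0$ in $\C_k^{(r)}$; otherwise the whites of $q$ occupy exactly $\{1,\dots,r\}$, all horizontal edges of $q$ lie in $\{r+1,\dots,k\}$, no middle-row loops form (as $e^*$ has no horizontal edges), and one reads off $e^*\cdot q=p^*$.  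Consequently $e^*$ acts on $\C_k^{(r)}$ as a rank-one operator with image $\KK p^*$.

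For indecomposability, suppose $\C_k^{(r)}=A\oplus B$ with both summands nonzero, and write $p^*=a+b$ with $a\in A$, $b\in B$.  Then $e^*\cdot a\in A\cap\KK p^*$ and $e^*\cdot b\in B\cap\KK p^*$, while their sum is $e^*\cdot p^*=p^*$.  Because $\KK p^*$ is one-dimensional, at least one of the two is a nonzero scalar multiple of $p^*$, forcing $p^*\in A$ or $p^*\in B$.  Since $p^*$ generates $\C_k^{(r)}$, one of the summands then coincides with $\C_k^{(r)}$ and the other vanishes, a contradiction.

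Finally, when $\M_k(x)$ is semisimple, indecomposability forces each $\C_k^{(r)}$ to be irreducible.  To see the $\C_k^{(r)}$ are pairwise nonisomorphic, note that the rank inequality \eqref{rank.property} shows that the two-sided ideal $\J_{r-1}$ annihilates $\C_k^{(r)}$, while for $s<r$ the idempotent $d_{p^*_s}^{p^*_s}\in\J_s\subseteq\J_{r-1}$ (with $p^*_s$ an analogous edge-free rank-$s$ path in $\cP_k^s$) acts as the identity on the generator $p^*_s\in\C_k^{(s)}$; hence the annihilators distinguish the modules.  Completeness of the list of simples is then forced by the dimension identity $\sum_{r=0}^{k}(\dim\C_k^{(r)})^2=\sum_{r=0}^k\mathsf{m}_{k,r}^2=\mathcal{M}_{2k}=\dim\M_k(x)$ coming from \eqref{eq:sumofsqs}, which exhausts the Wedderburn decomposition of the semisimple algebra $\M_k(x)$.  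The main technical obstacle is the diagram chase in the second paragraph establishing the dichotomy for $e^*\cdot q$; once that is in hand, the rest is formal.
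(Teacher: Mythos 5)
Your proof is correct, and it is essentially the same idea as the paper's: both arguments hinge on the idempotent $d_p^p$ (suitably normalized) acting as a rank-one projection onto $\KK p$, together with the fact that $p$ cyclically generates $\C_k^{(r)}$ because $d_p^q\cdot p = q$. The paper chooses an arbitrary $p\in\cP_k^r$, normalizes to $e_p = x^{-\varepsilon(p)}d_p^p$, and then shows $\End_{\M_k(x)}(\C_k^{(r)})$ is one-dimensional by tracking eigenspaces of $e_p$, whereas you fix the edge-free representative $p^*$ (so that $\varepsilon(p^*)=0$ and $e^*=d_{p^*}^{p^*}=\mathbf{1}_{r,k}$, no normalization needed) and run the direct $A\oplus B$ decomposition argument. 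These are interchangeable routes to indecomposability. One modest advantage of your version: by taking $\varepsilon(p^*)=0$ you avoid the factor $x^{-\varepsilon(p)}$ entirely, so the argument transparently covers $x=0$, a case where the paper's displayed normalization would need the implicit understanding that $p$ be chosen edge-free. Your choice also makes the indecomposability idempotent coincide with the element $\mathbf{1}_{r,k}$ that the paper uses in the pairwise-nonisomorphism step, so the two halves of the proof share a single distinguished element. The nonisomorphism argument via annihilators ($\J_{r-1}$ kills $\C_k^{(r)}$ but not $\C_k^{(s)}$ for $s<r$) and the completeness count via $\sum_r \mathsf{m}_{k,r}^2=\mathcal{M}_{2k}$ match the paper exactly.
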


\begin{proof} First we prove that $\C_k^{(r)}$ is indecomposable.  Let $p$ be a Motzkin path in $\mathcal P_k^r$ and let 
$e_p =  {x^{-\varepsilon(p)}}d_p^p \in \M_k(x)$.
The image of $e_p$ on $\C_k^{(r)}$
 is $\KK p$,
so the kernel has codimension one.    Thus, $e_p$
acts with eigenvalues $0$ and $1$ on $\C_k^{(r)}$,  
and  the eigenspace corresponding to the eigenvalue 1 is
1-dimensional with $p$ as a basis by \eqref{eq:Eigenvalue}.   

Let $\mathsf{Z}_k^r = \End_{\M_k(x)} ( \C_k^{(r)} )$, 
and assume that  $z \in \mathsf{Z}_k^r$.   Since $z$ and $e_p$ commute,  $z$ maps the eigenspaces
of $e_p$ into themselves.  However,
then $z(p) = \lambda p$ for some $\lambda \in \KK$,  since
the eigenspace corresponding to the eigenvalue 1 is 1-dimensional.
Now let $q$ be another path in $\mathcal P_k^r$,
and set $d =  x^{-\varepsilon(p)} d_p^q$ so that  $d\cdot p = q$.
Then  
$$z(q) = z (d\cdot p) = d\cdot z(p) = \lambda d \cdot p = \lambda q.
$$
Consequently  $z$ is a scalar multiple of the identity map. 
But $z$ is an arbitrary  transformation in $\mathsf{Z}_k^r$,  which
must then be 1-dimensional.   This implies that $\C_k^{(r)}$ is indecomposable. 

Now when $x$ is chosen so that $\M_k(x)$ is semisimple, then since
modules for $\M_k(x)$ are completely reducible, and since the module $\C_k^{(r)}$
is indecomposable, $\C_k^{(r)}$ must be irreducible in this case.

To  prove that these modules are pairwise nonisomorphic,   let 
\begin{equation}\label{eq:1ellk}
{\bf 1}_{\ell,k}  = 
\underbrace{\beginpicture
\setcoordinatesystem units <0.4cm,0.12cm>        
\setplotarea x from .5 to 4.5, y from -1 to 2
\put{$\bullet$} at  1 -1  \put{$\bullet$} at  1 2
\put{$\bullet$} at  2 -1  \put{$\bullet$} at  2 2
\put{$\cdots$} at  3 -1  \put{$\cdots$} at  3 2
\put{$\bullet$} at  4 -1  \put{$\bullet$} at  4 2
\plot 1 2 1 -1 /
\plot 2 2 2 -1 /
\plot 4 2 4 -1 /
\endpicture}_{\ell}
\underbrace{\beginpicture
\setcoordinatesystem units <0.4cm,0.12cm>        
\setplotarea x from .5 to 4.5, y from -1 to 2
\put{$\bullet$} at  1 -1  \put{$\bullet$} at  1 2
\put{$\bullet$} at  2 -1  \put{$\bullet$} at  2 2
\put{$\cdots$} at  3 -1  \put{$\cdots$} at  3 2
\put{$\bullet$} at  4 -1  \put{$\bullet$} at  4 2
\endpicture}_{k-\ell}, \qquad 0 \le \ell \le k.
\end{equation}
\noindent The diagram ${\bf 1}_{\ell,k} $ acts by 0 on all $\C_k^{(r)}$ with $r > \ell$, since ${\bf 1}_{\ell,k} $ lowers the rank of any diagram of rank greater than $\ell$, and ${\bf 1}_{\ell,k}$ is
nonzero on $\C_k^{(\ell)}$.  Thus, the diagrams ${\bf 1}_{\ell,k}, \ 0 \le \ell \le k$, are sufficient to distinguish one $\C_k^{(r)}$ from another.

To show that we have constructed all of the irreducible $\M_k(x)$-modules, we sum the squares of the dimensions.  Since
$\C_k^{(r)}$ has a basis consisting of the Motzkin paths of rank $r$, it has dimension $\mathsf{m}_{k,r}$.  But then by \eqref{eq:sumofsqs},
$\sum_{r=0}^k \dim(\C_k^{(r)})^2 = \sum_{r=0}^k \mathsf{m}_{k,r}^2 =\Mtwok,$ which is the dimension of $\M_k(x)$.  Thus, by the general Wedderburn theory for semisimple algebras, we have found all of the irreducible $\M_k(x)$-representations. 
\end{proof}

\begin{remark}{\rm  Let $\mathcal {T}_k^r$ denote the set of all paths in $\cP_k^r$ which
have no zeros, so $k-r$ is a nonnegative even integer.    The definition of the action of diagrams on paths shows that
the Temperley-Lieb diagrams (those Motzkin diagrams having no unconnected vertices) act on these
paths.  The same argument as in the proof of this theorem shows that 
the $\KK$-span $\mathsf{T}_k^{(r)}$ of $\mathcal T_k^r$ is an indecomposable module for the Temperley-Lieb
algebra $\mathsf{TL}_k(x)$, and this module is irreducible when $\mathsf{TL}_k(x)$ is semisimple.
These modules give a complete set of nonisomorphic irreducible modules for the
Temperley-Lieb algebra in the semisimple case. }  \end{remark}

As an immediate consequence of Theorem \ref{thm:irredmod}, we can explicitly derive the restriction rule for $\M_{k-1}(x) \subseteq \M_k(x)$, where this embedding is given by adding to a Motzkin $(k-1)$-diagram two 
vertices $k$,$k'$ on the right and connecting them with a vertical edge.   For
$p = (a_1, \ldots, a_k)$,   set  $p' = (a_1, \ldots, a_{k-1})$.   Then 
$$\rank(p') = \begin{cases}  \rank(p)-1 \ \ \hbox{\rm and} \ \  p' \in \cP_{k-1}^{r-1} \ \ \hbox{\rm  if} \ \  a_k = 1, \\
\rank(p) \ \ \hbox{\rm and} \ \ p' \in \cP_{k-1}^{r} \  \ \hbox{\rm  if} \ \ a_k = 0, \\
\rank(p)+1\ \ \hbox{\rm and} \ \ p' \in \cP_{k-1}^{r+1} \ \ \hbox{\rm  if} \ \ a_k = -1. \end{cases}$$
Thus, the map $p \to p'$ realizes the decomposition,
\begin{equation}\label{eq:restrict}
\Res^{\M_k(x)}_{\M_{k-1}(x)} ( \C_k^{(r)} ) = \C_{k-1}^{(r-1)} \oplus \C_{k-1}^{(r)} \oplus \C_{k-1}^{(r+1)},
\end{equation}
where we define $\C_{k-1}^{(\ell)} = 0$ if $\ell > k-1$ or $\ell < 0.$
This relation is to be expected from \eqref{eq:TensorRule} and the Bratteli diagram (Figure 1), and it is a representation-theoretic interpretation of the identity
\begin{equation}\label{eq:restrictionrule}
\mathsf{m}_{k,r} = \mathsf{m}_{k-1,r-1} +  \mathsf{m}_{k-1,r} + \mathsf{m}_{k-1,r+1},
\end{equation}
which appears in \cite{DS}.   

 \begin{subsection}{Cellularity} \label{subsec:cellular}

In this section, we show that the Motzkin algebra $\M_k(x)$ over
any unital commutative ring $\KK$ is {\em cellular}  in the sense of Graham and Lehrer \cite{GL} and that the modules $\C_k^{(r)}$,  which have a basis  the Motzkin paths in 
$\cP_k^r$,  are its left cell modules.  

According to \cite[Defn.~1.1]{GL}, a unital associative  algebra $\mathsf A$ over a unital commutative ring $\KK$ is \,{\em cellular}\, if it has cell data 
$(\Lambda, \mathcal P, \mathsf{d}, \ast)$ satisfying the following requirements:
\begin{itemize} 
\item[{\rm (C1)}]   {\it  $\Lambda$ is a partially ordered set.  For each $\lambda \in \Lambda$ there is a  finite 
 set $\mathcal P^\lambda$, and $\mathsf{d}: \amalg_{\lambda \in \Lambda}  \mathcal P^\lambda \times \mathcal P^\lambda \to \mathsf{A}$,  $(p,q) \mapsto \mathsf{d}_p^q$, is an injective map which  has as its image a $\KK$-basis of $\mathsf A$.}
 \item[{\rm (C2)}] {\it $\ast$ is  a $\KK$-linear involution of $\mathsf{A}$  such that  $(\mathsf{d}_p^q)^* = \mathsf{d}_q^p$ for all $p,q \in \mathcal P^\lambda$ and all $\lambda \in \Lambda$.}
\item[{\rm (C3)}]  {For $\lambda \in \Lambda$, \  $p,q \in \mathcal P^\lambda$, \  and any $a \in \mathsf{A}$,}  
 
\begin{equation}\label{cell.representation}
 a \mathsf{d}^q_p \equiv \sum_{q' \in \cP^\lambda}  \mu_a(q',q) \mathsf{d}^{q'}_{p} \quad \mod \mathsf{A}(< \lambda),
\end{equation}
{\it where $\mu_a(q',q) \in \KK$ is independent of $p$,  and $\mathsf{A}(< \lambda)$ is the $\KK$-submodule of $\mathsf{A}$
spanned by $\{ \mathsf{d}_{r}^s \mid r,s \in \mathcal P^\mu$ for all $\mu < \lambda$ in $\Lambda\}$.}  
\end{itemize}

Being cellular implies the existence of   cell representations (Definition (2.1) in \cite{GL}).
In particular, for  a cellular algebra $\mathsf{A}$ with cell data $(\Lambda, \cP, \mathsf{d}, *)$, there corresponds to each $\lambda \in \cP$,
a (left) cell representation  $\N^{\lambda}$ having a  free $\KK$-module basis $\{ \cb_q \ \vert\ q\in \cP^\lambda\}$ such that for all $a \in\mathsf{A}$   
$$a \cb_q = \sum_{q' \in \cP^\lambda} \mu_a(q',q) \cb_{q'},$$
 where $\mu_a(q',q)$ is as in \eqref{cell.representation}.  \smallskip

For the Motzkin algebra $\M_{k}(x)$,  the cell data is $(\Lambda_k, \mathcal{P}_k, d, \ast)$ where  
  \begin{itemize} 
\item[{\rm (C1)}]  $\Lambda_k =  \{0, 1, \ldots, k\}$  under the usual ordering of integers.  For $r \in \Lambda_k$, the set $\mathcal P_k^r$ is
the set of Motzkin paths of rank $r$,  and  the map $d:  \amalg_{r\in \Lambda_k}  \cP_k^r \times \cP_k^r \to \M_k(x)$   sends $(p,q)$ to the
diagram  $d_p^q$. 
 The map $d$ is injective  (compare \eqref{eigenfunction}), and clearly its image is the basis of all Motzkin $k$-diagrams.  
\item[{\rm (C2)}]   As observed  in Section \ref{subsec:generators},  the involution $\ast$ on $\M_k(x)$, given by reflecting a diagram over its horizontal axis, satisfies $(d_1 d_2)^\ast = d_2^\ast d_1^\ast$ and $(d^\ast)^\ast = d$.  In particular, $(d_p^q)^\ast = d_q^p$.

\item[{\rm (C3)}] As in Section \ref{subsec:motzkinalgebra}, we let $\J_r  \subseteq \M_k(x)$ be $\KK$-span  of the diagrams $d$ with $\rank(d) \le r$.   The $\J_r$ are ideals satisfying 
$$
\J_0 \subseteq \J_1 \subseteq \cdots \subseteq  \J_k = \M_k(x).
$$
 Let $p,q$ be Motzkin paths of rank $r$. Then for any diagram $d$,  either $\rank(d d_p^q) < \rank(d_p^q)$, or  $\rank(d d_p^q)  =  \rank(d_p^q)$ and $d d_p^q = x^\ell d_{p}^{q'}$, for some nonnegative integer $\ell$.  In the second event, the bottom path $p$ is unchanged.  It follows that for $a \in \M_k(x)$, 
\begin{equation}\label{cell.representation}
 a d^q_p \equiv \sum_{q' \in \cP_k^r}  \mu_a(q',q) d^{q'}_{p} \quad \mod \J_{r-1}.
\end{equation}
where $\mu_a(q',q) \in \KK$ is independent of $p$.  
\end{itemize}

This  demonstrates that the Motzkin algebra $\M_k(x)$ is cellular. 
\smallskip

 For each $r \in \Lambda_k$, the (left) cell representation $\N_k^{(r)}$ for $\M_k(x)$  is the free $\KK$-module with basis $\{ \cb_q \ \vert\ q\in \cP_k^r\}$ such that for all $a \in\M_k(x)$  we have
$$a \cb_q = \sum_{q' \in \cP_k^r} \mu_a(q',q) \cb_{q'},$$
 where $\mu_a(q',q)$ is as in \eqref{cell.representation}.  
The action of $a$ on $d^q_p \mod \mathsf{J}_{r-1}$ is independent of the bottom 1-factor $p$, and when we replace the endpoints of the  horizontal edges in $d^q_p$ with white vertices
and remove the horizontal edges in the bottom row, then the action of $a$ on $d^q_p \mod \mathsf{J}_{r-1}$ is precisely the same as the action of $a$ on $q \mod \W_k^{(r-1)}$, (compare  \eqref{eq:Wkrdef}).   Thus,  the action of $a$ on $q$ is the same as that of $a$ on $\cb_q$, so we have
\begin{equation}
\N_k^{(r)} \cong \C_k^{(r)} \qquad\text{as $\M_k(x)$-modules}.  \end{equation}

The cell representations $\C_k^{(r)}$ come equipped with a $\KK$-bilinear form $\langle \cdot ,\cdot \rangle: \C_k^{(r)} \times \C_k^{(r)}\to \KK$ defined for $p,q \in \cP_k^r$  by the equation\begin{equation}\label{eq:bilinearform}
d_p^p d^q_q \equiv \langle p, q \rangle  d^p_q \quad  \mod \mathsf{J}_{r-1}.
\end{equation}
Observe that 
$$(d_p^p d^q_q)^* \equiv \langle p, q \rangle  (d^p_q)^* \quad  \mod \mathsf{J}_{r-1},$$
which implies that  $\langle p,q \rangle d_p^q  \equiv   d_q^q d_p^p   \equiv   \langle q, p \rangle  d_p^q 
\mod \mathsf{J}_{r-1}$, from which we may deduce that $\langle q,p \rangle =
\langle p,q \rangle$ for all $p,q \in \cP_k^r$.   Moreover, since 
$$\langle a p, q \rangle = \langle p,a^*q\rangle$$ for all $a \in \M_k(x)$, $p,q \in \cP_k^r$ 
(see \cite[Prop. 2.4]{GL}), 
the radical $\mathsf{rad}_k^{(r)}$ of the bilinear form  is an $\M_k(x)$-submodule of $\C_k^{(r)}$.
As a consequence of 
\cite[Prop.~3.2, Thm.~3.8]{GL},  we know (i),(ii) and (iii) of the following:

\begin{thm} \label{thm:cellmod}  The  Motzkin algebra $\M_k(x)$ 
over a unital commutative ring  $\KK$ is cellular and the modules $\C_k^{(r)}$ of \eqref{eq:Cmoddef} are the left cell modules for $\M_k(x)$.   Moreover, when $\KK$ is a field, the following hold:
\begin{itemize}
\item[{\rm (i)}]   If $I_k^{(r)}:= \C_k^{(r)}/ \mathsf{rad}_k^{(r)}$ is nonzero, then it is
an absolutely irreducible $\M_k(x)$-module; that is, it is irreducible over any extension field of $\KK$.
\item[{\rm (ii)}]  $\{I_k^{(r)}  \neq (0) \mid r = 0,1,\dots, k\}$ is a complete set of representatives
of isomorphism classes of absolutely irreducible $\M_k(x)$-modules.  
\item[{\rm (iii)}]  The following are equivalent:
\begin{itemize}
\item[{\rm (a)}]   the algebra $\M_k(x)$ is semisimple;
\item[{\rm (b)}]   for each $r=0,1,\dots, k$, the cell module $\C_k^{(r)}$
is absolutely irreducible;
\item[{\rm (c)}]   the bilinear form 
$\KK$-bilinear form $\langle \cdot , \cdot \rangle: \C_k^{(r)} \times \C_k^{(r)}  \to \KK$ is nondegenerate 
for each $r=0,1,\dots, k$. \end{itemize}  \end{itemize}
\end{thm}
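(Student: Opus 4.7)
The plan is to invoke directly the general theory of cellular algebras developed in \cite{GL}, since essentially all the substantive work is already complete. Properties (C1)--(C3) verified just above establish that $\M_k(x)$ is cellular with index set $\Lambda_k = \{0,1,\dots,k\}$ and cell basis $\{d_p^q \mid p,q \in \cP_k^r,\ 0 \le r \le k\}$. The involution $\ast$ supplies the required anti-automorphism, the Graham--Lehrer cell representation $\N_k^{(r)}$ has been identified with our path module $\C_k^{(r)}$, and the symmetric bilinear form of \eqref{eq:bilinearform} is the cellular form in the sense of \cite{GL}. Thus each assertion reduces to citing an appropriate result of Graham and Lehrer.

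Part (i) is immediate from \cite[Prop.~3.2]{GL}, which states that for any cellular algebra the quotient $\C^\lambda/\mathsf{rad}^\lambda$ of a cell module by the radical of its bilinear form is either zero or absolutely irreducible. Part (ii) then follows from \cite[Thm.~3.4]{GL}, which guarantees that $\{\mathsf{L}^\lambda \mid \mathsf{L}^\lambda \neq 0\}$ is a complete set of representatives of isomorphism classes of absolutely irreducible modules for a cellular algebra over any field.

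For part (iii), I would argue (c) $\Rightarrow$ (b) $\Rightarrow$ (a) $\Rightarrow$ (c). The first implication is immediate: nondegeneracy of $\langle\cdot,\cdot\rangle$ on $\C_k^{(r)}$ forces $\mathsf{rad}_k^{(r)} = 0$, so $\C_k^{(r)} = \mathsf{L}_k^{(r)}$ is absolutely irreducible by (i). For (b) $\Rightarrow$ (a), part (ii) says that $\{\C_k^{(r)} \mid 0 \le r \le k\}$ is a complete list of pairwise nonisomorphic absolutely irreducible modules, and the sum-of-squares identity
\[
\sum_{r=0}^k \dim(\C_k^{(r)})^2 = \sum_{r=0}^k \mathsf{m}_{k,r}^2 = \mathcal{M}_{2k} = \dim \M_k(x)
\]
from \eqref{eq:sumofsqs} forces $\M_k(x)$ to be semisimple by Wedderburn. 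Finally, (a) $\Rightarrow$ (c) is the standard consequence of \cite[Thm.~3.8]{GL}: in a semisimple cellular algebra every cell module coincides with its irreducible top, so the bilinear form is necessarily nondegenerate.

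Because the cellular datum, the identification of cell modules, and the bilinear form have all been set up prior to the statement, there is no substantive obstacle remaining. The only delicate point is to match conventions carefully with \cite{GL}; the sum-of-squares identity \eqref{eq:sumofsqs} is what makes the implication (b) $\Rightarrow$ (a) straightforward, and without it we would need to fall back on more general cellular-algebra machinery to conclude semisimplicity.
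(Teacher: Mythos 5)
Your proof is correct and follows the same route as the paper: the paper deduces the theorem directly from Graham--Lehrer's Proposition~3.2 and Theorem~3.8, which is exactly what you do. Your explicit argument for (b)~$\Rightarrow$~(a) in part~(iii), invoking the sum-of-squares identity \eqref{eq:sumofsqs} together with Wedderburn theory rather than citing Theorem~3.8 for that implication, is a pleasant elementary variation but does not change the overall character of the argument.
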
 
\end{subsection}

\begin{subsection}{Characters of $\C_k^{(r)}$} 

Recall that a character $\chi$ of $\M_k(x)$ on a module $\W$  is the 
map $\chi: \M_k(x) \rightarrow \KK$ given by the trace on $\W$.
Thus  $\chi$ is $\KK$-linear, and  for any two elements $a,b \in  \M_k(x)$, we have 
$\chi(ab) = \chi(ba)$.    In this section,  we compute the characters of the modules
$\C_k^{(r)}$.  
 
\begin{prop}  Any character $\chi$ of $\M_k(x)$ is completely determined by its values on the diagrams  \ ${\bf 1}_{\ell,k}, \ 0 \le \ell \le k$,  in \eqref{eq:1ellk}. 
\end{prop}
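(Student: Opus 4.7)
The plan is to argue by induction on the rank $r$ of the basis diagrams, combining the cyclic property $\chi(ab) = \chi(ba)$ of any character with a canonical factorization of each basis element $d_p^q$.

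For each rank $r \in \{0, 1, \dots, k\}$, let $p_0 = p_0(r)$ denote the \emph{standard} rank-$r$ $1$-factor whose white vertices occupy positions $1, \dots, r$ and whose remaining vertices at positions $r+1, \dots, k$ are isolated black, so that $d_{p_0}^{p_0} = \mathbf{1}_{r,k}$. A direct diagrammatic check establishes the factorization
\[
d_p^q \;=\; d_{p_0}^q \cdot d_p^{p_0},
\]
because when these two diagrams are stacked the two middle copies of $p_0$ line up exactly, no horizontal edges are present in the middle row, no loops are created, and the $r$ vertical edges compose through the $r$ white middle vertices to connect $p$'s white positions at the bottom with $q$'s white positions at the top. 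Applying the trace identity then gives
\[
\chi(d_p^q) \;=\; \chi\!\left(d_{p_0}^q \cdot d_p^{p_0}\right) \;=\; \chi\!\left(d_p^{p_0} \cdot d_{p_0}^q\right).
\]

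The heart of the argument is to analyze the product $D := d_p^{p_0} \cdot d_{p_0}^q$. Its top row inherits the $p_0$-pattern from the top of $d_p^{p_0}$, its bottom row inherits the $p_0$-pattern from the bottom of $d_{p_0}^q$, and $\rank(D) \le r$ by \eqref{eq:rank}. Two cases arise. If $\rank(D) = r$, then $D$ is forced to be the diagonal diagram $x^s \mathbf{1}_{r,k}$, where $s$ is the nonnegative integer counting the closed loops that arise in the middle row, so $\chi(d_p^q) = x^s \chi(\mathbf{1}_{r,k})$. Otherwise $\rank(D) < r$, and $D = x^s d'$ for some Motzkin basis diagram $d'$ of strictly smaller rank, so $\chi(d_p^q) = x^s \chi(d')$.

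An induction on $r$ then finishes the proof. The base case $r = 0$ is immediate: since ranks cannot drop below zero, we always have $D = x^s \mathbf{1}_{0,k}$, so $\chi(d_p^q) = x^s \chi(\mathbf{1}_{0,k})$. For the inductive step, the dichotomy expresses $\chi(d_p^q)$ either as a scalar multiple of $\chi(\mathbf{1}_{r,k})$, or as a scalar multiple of $\chi(d')$ with $\rank(d') < r$; in the latter case $\chi(d')$ is already determined by $\chi(\mathbf{1}_{\ell,k})$ for $\ell < r$ by the inductive hypothesis. The main subtlety is verifying the dichotomy for $D$ through a planar diagrammatic analysis: one must track which paths in the middle row complete a vertical edge from top to bottom, which terminate at dead-end middle vertices (reducing the rank), and which close up into loops (contributing the factor $x^s$).
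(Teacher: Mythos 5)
Your proof is correct, and it takes a genuinely different route from the paper's. The paper's argument inducts on the strand count $k$: it writes a diagram of rank $<k$ as $d = d_1 p_k d_2$, uses the cyclic property of $\chi$ together with $p_k^2 = p_k$ to move the $p_k$'s together, identifies $p_k d_2 d_1 p_k$ with a power of $x$ times a diagram that lives (up to the isolated $k$-th strand) in $\M_{k-1}(x)$, and iterates until an identity diagram of some $\M_\ell(x)$ is reached; this is really the Jones basic construction viewpoint, and the paper notes the result also follows from a general lemma of Halverson--Ram. Your argument instead inducts on the rank $r$ of the basis diagram, using the canonical factorization $d_p^q = d_{p_0}^q\, d_p^{p_0}$ through the standard $1$-factor $p_0$ with $d_{p_0}^{p_0} = \mathbf{1}_{r,k}$. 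The cyclic property then converts this to $d_p^{p_0}\, d_{p_0}^q$, which has $p_0$-type top and bottom rows; the dichotomy (rank preserved forces the product to be $x^s\mathbf{1}_{r,k}$ by planarity, rank drop lets the inductive hypothesis take over) closes the argument cleanly. Both proofs rest on linearity plus $\chi(ab)=\chi(ba)$; yours keeps $k$ fixed and moves down in rank, the paper's keeps the rank structure implicit and moves down in $k$. The one point you wave at but should spell out is the planarity step in the rank-preserving case: when $\rank(D)=r$, both rows of $D$ have isolated vertices at positions $r+1,\dots,k$ and no horizontal edges, so the $r$ surviving vertical edges match $\{1,\dots,r\}$ on top to $\{1,\dots,r\}$ on bottom, and the unique non-crossing such matching with no horizontal edges is the identity, forcing $D = x^s \mathbf{1}_{r,k}$.
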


\begin{proof}  
This follows by the Jones basic construction and \cite[Lem.~2.8]{HR}.
  However, we include the explicit calculation since it is useful 
  to see the recursive algorithm for converting a general diagram to its representative of the form ${\bf 1}_{\ell,k}.$
By the linearity of $\chi$,  it is sufficient to compute the values of $\chi$  on the basis of diagrams.   If the rank of a diagram $d$ is $k$, then 
$d = {\bf 1}_{k,k}$.  If the rank of $d$ is less than $k$, then there exist diagrams $d_1, d_2$ such that $d= d_1 p_k d_2$; this can easily be seen by drawing diagrams.   
Thus $\chi(d) = \chi(d_1 p_k d_2) = \chi(d_1 p_k p_k d_2) = \chi(p_k d_2 d_1 p_k ) = x^{- \kappa(d_1,d_2)} \chi(\delta(d_2 d_1) p_k )$, where $\delta(d_2 d_1)$ is a diagram in $\M_{k-1}(x)$ and $\kappa(d_1,d_2)$ is the number of closed cycles produced in the product $d_1 d_2$.  Now, continue this argument on the diagram $\delta(d_2 d_1) \in \M_{k-1}(x)$ until we reach the identity diagram of rank $\ell$ in $\M_\ell(x)$ for some $\ell$.  \end{proof}

\begin{prop}\label{prop:charac}  If $\chi_k^{(r)}$ is the character of the module  $\C_k^{(r)}$, then 
$$
\chi_k^{(r)}( {\bf 1}_{\ell,k}) = \begin{cases}
\dim(\C_\ell^{(r)}) = \mathsf{m}_{\ell,r}, & \text{ if $r \le \ell$} \\
0, & \text{ if $r > \ell$}. \\
\end{cases}
$$
\end{prop}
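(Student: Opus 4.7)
The strategy is to compute the trace of ${\bf 1}_{\ell,k}$ directly on the basis $\cP_k^r$ of $\C_k^{(r)}$, using the action \eqref{ActionOnPaths2}. The case $r > \ell$ is immediate: since $\rank({\bf 1}_{\ell,k}) = \ell$, inequality \eqref{rank.property} gives $\rank({\bf 1}_{\ell,k} \cdot p) \le \ell < r$ for every $p \in \cP_k^r$, so ${\bf 1}_{\ell,k}$ annihilates $\C_k^{(r)}$ and the trace is $0$.

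For $r \leq \ell$, first observe that the bottom row of ${\bf 1}_{\ell,k}$ contains no horizontal edges, so no loops can form when ${\bf 1}_{\ell,k}$ is stacked on any path $p$; in particular $\kappa({\bf 1}_{\ell,k}, p) = 0$, and ${\bf 1}_{\ell,k} \cdot p = q$ with no scalar factor. I would then describe the $1$-factor $q$ explicitly by chasing edges through the stacked diagram: (i) positions $j > \ell$ of $q$ are isolated black vertices (the top of ${\bf 1}_{\ell,k}$ is isolated there); (ii) positions $i \leq \ell$ of $q$ inherit the local type from $p$ at position $i$, with the single exception that a horizontal edge of $p$ crossing the boundary (i.e.\ $\{i,j\}$ with $i \leq \ell < j$) produces an isolated black vertex at $i$ in $q$, because the chain through ${\bf 1}_{\ell,k}$ dead-ends at the isolated bottom vertex at position $j$ rather than propagating a white.

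The contribution of $p$ to the trace is $1$ precisely when $q = p$. From the description, this occurs if and only if every position $j > \ell$ of $p$ is an isolated black vertex, equivalently $a_j = 0$ for all $j > \ell$ in $p = (a_1, \ldots, a_k)$; in particular, no horizontal edge of $p$ can cross the boundary. Such paths are in rank-preserving bijection with $\cP_\ell^r$ via the restriction $(a_1, \ldots, a_k) \mapsto (a_1, \ldots, a_\ell)$, with inverse given by padding with zeros, yielding $\chi_k^{(r)}({\bf 1}_{\ell,k}) = |\cP_\ell^r| = \mathsf{m}_{\ell,r}$.

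The main point requiring care is the color-propagation rule in step (ii): one must verify that whites in $q$ arise only from chains that terminate at white vertices of $p$, so that horizontal edges of $p$ crossing the boundary do not accidentally create whites in $q$ (if they did, the resulting count would violate \eqref{rank.property}). Once this subtlety is pinned down from the definition of the action in Section \ref{subsec: motzact}, the remaining bijection is a routine combinatorial identification.
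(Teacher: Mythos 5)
Your proof is correct and follows essentially the same route as the paper's: compute the trace of ${\bf 1}_{\ell,k}$ directly on the path basis of $\C_k^{(r)}$, observe that a path $p$ contributes to the diagonal only if $p$ has isolated black vertices in the last $k-\ell$ positions, and identify such paths with $\cP_\ell^r$ by truncation. The only difference is organizational: you dispose of the $r>\ell$ case up front via \eqref{rank.property} and spell out the boundary-crossing-edge subtlety explicitly, whereas the paper folds both into the observation that $q=p$ forces $p$ to have isolated vertices at positions $>\ell$; this is cosmetic, not a change of method.
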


\begin{proof}  We compute the trace of $d =  {\bf 1}_{\ell,k}$ on the basis of Motzkin paths $p \in \cP_k^r$.  When $d = {\bf 1}_{\ell,k}$ acts on the path $p$ (see (\ref{ActionOnPaths2})),  we see that $d\cdot p = x^{\kappa(d,p)} q$ modulo the span of paths of lower rank.  To contribute to the trace, we must have $p = q$ (an entry on the main diagonal).  If $dp$ has lower rank than $p$,  then $d\cdot p=0$, so we assume that  $\rank(dp) = \rank(p)$. By the structure of $d =  {\bf 1}_{\ell,k}$, the 1-factor $p$ must have isolated vertices in the last $k-\ell$ positions, so for the product to be nonzero we need $\ell \ge r$.  Letting $\tilde p$ be the diagram obtained by dropping the last $k - \ell$  isolated  vertices of $p$, we see that $\tilde p$ is a 1-factor in $\C_\ell^{(r)}$ and that $d$ acts as identity on $\C_\ell^{(r)}$,  so its trace is the dimension $\dim(\C_\ell^{(r)})$, which is $\mathsf{m}_{\ell,r}$.
\end{proof}

\end{subsection} 

\end{section}

 \begin{section}{Gram matrices, Chebyshev Polynomials, and \newline Semisimplicity}
 
 Throughout this section, we assume that $\KK$ is a field, and  we study the Gram matrix $\G_k^{(r)}$ of the cell module $\C_k^{(r)}$ with basis consisting of the Motzkin paths in $\cP_k^r$.  By explicitly changing the basis, we block diagonalize $\G_k^{(r)}$ enabling us to give a precise formula for the determinant $\det(\G_k^{(r)})$ in terms of certain Chebyshev polynomials $u_j$.  We then show that the Motzkin algebra $\M_k(x)$ is semisimple if and only if  the parameter $x$ satisfies   
 $u_j(x-1) \neq 0$ for $1 \leq j \leq k-1$. 
 
 \begin{subsection}{Chebyshev polynomials}\label{sec:CP}
 
For $k \ge 0$, the Chebyshev polynomials of the second kind $U_k(x)$ are defined by $U_0(x) =1, U_1(x) = 2x$, and the three-term recurrence relation 
$$
U_k(x) = 2x U_{k-1}(x) - U_{k-2}(x),  \qquad \hbox{ for } k \geq 2.  
$$
Setting $u_k(x) = U_k(x/2)$,  we have the recurrence
\begin{equation}\label{chebyshevrecursion}
u_k(x) = x u_{k-1}(x) - u_{k-2}(x),  \quad u_0(x) =1, \ \ u_1(x) = x.
\end{equation}
The polynomials $u_k(x-1)$ appear in the determinant of the Gram matrices in this section. The first few of them are
\begin{equation*}
\begin{array}{lll  c  lll}
u_0(x-1) &=& 1, & \hskip.5in & u_3(x-1) &=&  (x-1)(x^2-2x-1), \\
u_1(x-1) &=& x-1,  && u_4(x-1) &=& (x^2-3x+1) (x^2-x-1),\\
u_2(x-1) &=& x (x-2),  && u_5(x-1) &=& x (x-1) (x-2)  (x^2-2x-2).
\end{array}
\end{equation*} 
The polynomial $u_k(x-1)$ has degree $k$ and its roots are given by (see \cite{R}, p. 229),
\begin{equation}
\theta_m = e^{i \pi m /(k+1)} + e^{- i \pi m /(k+1)} + 1 = 2 \cos\left(\frac{\pi m}{k+1}\right) + 1,
\qquad m = 1, \ldots, k.
\end{equation}

\end{subsection}

 \begin{subsection}{Gram matrices}
 
 Let $\cP_k^r$ denote the Motzkin paths (or, equivalently, 1-factors) of length $k$ and rank $r$.  For $p, q \in \cP_k^r$, let $d_q^p$  be the unique Motzkin diagram whose top row is $p$ and whose bottom row is $q$.   Let $\C_k^{(r)}$ be the cell module for $\M_k(x)$ with basis given by $\cP_k^r$.  Recall that for $p, q \in \cP_k^r$,   the bilinear form $\langle \cdot , \cdot \rangle$ is defined on $\C_k^{(r)}$ by  $ d_p^p d_q^q \equiv \langle p,q\rangle d_q^p \mod \J_{r-1}.$
 In particular,  $\langle p, q \rangle = 0$ if $\rank(d_q^p) < r$ and, otherwise, $\langle p, q \rangle = x^{\kappa(p,q)}$, where $\kappa(p,q)$ is the number of loops removed from the middle row in the product $d_p^p d_q^q$.

 Let $\G_k^{(r)}$ be the Gram matrix of $\langle \cdot, \cdot\rangle$ on $\C_k^{(r)}$ so that   $(\G_k^{(r)})_{p,q} = \langle p, q\rangle$ for $p,q \in \cP_k^r$.   The module $\C_k^{(k)}$ has dimension 1 and is spanned by the 1-factor $q$ consisting of all white vertices.  The inner product is $\langle q,q \rangle = 1$.  The module $\C_k^{(k-1)}$ is $k$-dimensional and is spanned by the 1-factors $q_i$ having $k-1$ white vertices and 1 black vertex in position $i$.   It is easy to check by multiplying diagrams that $\langle q_i, q_j \rangle = \delta_{i,j}$,  the Kronecker delta.  Thus,
\begin{equation}
\G_k^{(k)}= (1) \qquad\hbox{and}\qquad \G_k^{(k-1)} = \mathsf{I}_k, \quad\hbox{the $k \times k$ identity matrix,} 
\end{equation}
and $\det(\G_k^{(k)}) = \det(\G_k^{(k-1)}) = 1$.

Order the Motzkin paths in $\cP_k^r$ recursively  as follows.  Let $p = (a_1, a_2, \ldots, a_k)$ and  $q = (b_1, b_2, \ldots, b_k)$ be two Motzkin paths of rank $r$.  Then $p < q$ if and only if
\begin{equation}\label{ordering}
a_k > b_k \quad\text{ or } \quad 
 a_k = b_k \text{ and }  (a_1, \ldots, a_{k-1}) < (b_1, \ldots, b_{k-1}).
\end{equation}
Let $\cP_k^{r} = \cP_k^{r,1} \sqcup \cP_k^{r,0}  \sqcup \cP_k^{r,-1},$ where $\cP_k^{r,\ell}$ is the set of Motzkin paths (1-factors) of length $k$ and rank $r$ for which $a_k = \ell$. 
 If $p \in  \cP_k^{r,1}, q \in  \cP_k^{r,0} $, then $\langle p, q \rangle = 0$, and so under this ordering, the Gram matrix decomposes into the following symmetric block form, 
\begin{equation}\label{eq:GramBlocks}
\G_k^{(r)} =  
\left(
\begin{tabular}{c|c|c}
$\G_{k-1}^{(r-1)}$ & ${\bf 0}$ & ${\bf A}^{\mathsf T} \phantom{\Big\vert}$ \\  \hline
${\bf 0}$ & $\G_{k-1}^{(r)}$ & ${\bf B}^{\mathsf T}\phantom{\Big\vert}$ \\  \hline
${\bf A}\phantom{\Big\vert}$ & ${\bf B}$ & ${\bf H}_{k-1}^{(r+1)}$
\end{tabular} \right),
\end{equation}
where $\mathsf{T}$ denotes the transpose, and ${\bf H}_{k-1}^{(r+1)}$ is a matrix consisting of the inner products $\langle p,q \rangle$ where $p,q$ are Motzkin paths ending in $-1$.    We will change the elements  of $\cP_k^{r,-1}$ to get a basis for
$\C_k^{(r)}$ which  block diagonalizes the Gram matrix in \eqref{eq:GramBlocks}.

\end{subsection}

 \begin{subsection}{Basis change}
 \label{sec:BasisChange}

Let $k\ge2$ and  $0 \le r \le k -2$.  If $p \in  \cP_k^{r,-1}$,  
then $p$ has  a  horizontal  edge connecting vertex $k$ to some vertex $i < k$.  We refer to this horizontal edge as the \emph{pivot edge},  and we denote it with a dashed line in the 1-factor $p$ and in the diagram $d_p^p$.  If $p \in \cP_k^{r,-1}$, then we define $p_\bullet \in  \cP_k^{r,0}$ to be the 1-factor $p$ with the pivot edge deleted, we let  $p_+ \in \cP_k^{r+1,1}$ be the 1-factor such that the pivot edge is replaced with two white vertices, and we define $p^{(1)}  \in  \cP_k^{r,1}$ to be the 1-factor created from $p$ by removing the pivot edge and then adding a new horizontal edge connecting the $i$th vertex with the rightmost white vertex $j < i$,   thus making it black.
We refer to the added horizontal edge as the pivot edge of $p^{(1)}$ and denote it with a dashed line.   If there is no white vertex $j < i$, then $p^{(1)} = 0$.   Here are four examples from $\cP_6$:
$$
\begin{array} {rcl}
p &  = &
{ \beginpicture
 \setcoordinatesystem units <0.4cm, 0.15cm>        
\setplotarea x from .5 to 6.5, y from -1 to 1
\setquadratic
\setdashes  <.4mm,.6mm>
\plot 5 .5      5.5  -1      6  .5 /
\setsolid
\plot 1 .5      1.5  -1      2  .5 /
\put{$\bullet $} at 1 .5
\put{$\bullet $} at 2 .5
\put{$\whitebull $} at 3 .5
\put{$\whitebull $} at 4 .5
\put{$\bullet$} at 5 .5
\put{$\bullet$} at 6 .5
\endpicture}  \\
p_\bullet  &  = & 
{ \beginpicture
 \setcoordinatesystem units <0.4cm, 0.15cm>        
\setplotarea x from .5 to 6.5, y from -1 to 1
\setquadratic
\plot 1 .5      1.5  -1      2  .5 /
\put{$\bullet $} at 1 .5
\put{$\bullet $} at 2 .5
\put{$\whitebull $} at 3 .5
\put{$\whitebull $} at 4 .5
\put{$\bullet$} at 5 .5
\put{$\bullet$} at 6 .5
\endpicture}  \\
p^{(1)}&  = & 
{ \beginpicture
 \setcoordinatesystem units <0.4cm, 0.15cm>        
\setplotarea x from .5 to 6.5, y from -1 to 1
\setquadratic
\setdashes  <.4mm,.6mm>
\plot 4 .5      4.5  -1      5  .5 /
\setsolid
\plot 1 .5      1.5  -1      2  .5 /
\put{$\bullet $} at 1 .5
\put{$\bullet $} at 2 .5
\put{$\whitebull $} at 3 .5
\put{$\bullet $} at 4 .5
\put{$\bullet$} at 5 .5
\put{$\whitebull $} at 6 .5
\endpicture}  \\
p_+&  = & 
{ \beginpicture
 \setcoordinatesystem units <0.4cm, 0.15cm>        
\setplotarea x from .5 to 6.5, y from -1 to 1
\setquadratic
\setdashes  <.4mm,.6mm>\setsolid
\plot 1 .5      1.5  -1      2  .5 /
\put{$\bullet $} at 1 .5
\put{$\bullet $} at 2 .5
\put{$\whitebull $} at 3 .5
\put{$\whitebull $} at 4 .5
\put{$\whitebull$} at 5 .5
\put{$\whitebull$} at 6 .5
\endpicture}  \\
\end{array}
\qquad \qquad\quad
\begin{array} {rcl}
p & =  &
{ \beginpicture
 \setcoordinatesystem units <0.4cm, 0.15cm>        
\setplotarea x from .5 to 6.5, y from -1 to 1
\setquadratic
\setdashes  <.4mm,.6mm>\plot 4 .5      5  -1      6  .5 /
\setsolid
\put{$\whitebull $} at 1 .5
\put{$\whitebull $} at 2 .5
\put{$\whitebull $} at 3 .5
\put{$\bullet $} at 4 .5
\put{$\bullet$} at 5 .5
\put{$\bullet$} at 6 .5
\endpicture}  \\
p_\bullet &  = & 
{ \beginpicture
 \setcoordinatesystem units <0.4cm, 0.15cm>        
\setplotarea x from .5 to 6.5, y from -1 to 1
\setquadratic
\put{$\whitebull $} at 1 .5
\put{$\whitebull $} at 2 .5
\put{$\whitebull $} at 3 .5
\put{$\bullet $} at 4 .5
\put{$\bullet$} at 5 .5
\put{$\bullet$} at 6 .5
\endpicture}  \\
p^{(1)} &  = & 
{ \beginpicture
 \setcoordinatesystem units <0.4cm, 0.15cm>        
\setplotarea x from .5 to 6.5, y from -1 to 1
\setquadratic
\setdashes  <.4mm,.6mm>\plot 3 .5      3.5 -1      4  .5 /
\setsolid
\put{$\whitebull $} at 1 .5
\put{$\whitebull $} at 2 .5
\put{$\bullet $} at 3 .5
\put{$\bullet $} at 4 .5
\put{$\bullet$} at 5 .5
\put{$\whitebull $} at 6 .5
\endpicture}  \\
p_+ & =  &
{ \beginpicture
 \setcoordinatesystem units <0.4cm, 0.15cm>        
\setplotarea x from .5 to 6.5, y from -1 to 1
\setquadratic
\put{$\whitebull $} at 1 .5
\put{$\whitebull $} at 2 .5
\put{$\whitebull $} at 3 .5
\put{$\whitebull $} at 4 .5
\put{$\bullet$} at 5 .5
\put{$\whitebull$} at 6 .5
\endpicture} 
\end{array}
$$
$$
\begin{array} {rcl}
p &  = &  
{ \beginpicture
 \setcoordinatesystem units <0.4cm, 0.15cm>        
\setplotarea x from .5 to 6.5, y from -1 to 1
\setquadratic
\setdashes  <.4mm,.6mm>\plot 3 .5      4.5  -1.25      6  .5 /
\setsolid
\plot 4 .5      4.5  -.75      5  .5 /
\put{$\whitebull $} at 1 .5
\put{$\bullet $} at 2 .5
\put{$\bullet $} at 3 .5
\put{$\bullet $} at 4 .5
\put{$\bullet$} at 5 .5
\put{$\bullet$} at 6 .5
\endpicture}  \\
p_\bullet  &  = & 
{ \beginpicture
 \setcoordinatesystem units <0.4cm, 0.15cm>        
\setplotarea x from .5 to 6.5, y from -1 to 1
\setquadratic
\plot 4 .5      4.5  -.75      5  .5 /
\put{$\whitebull $} at 1 .5
\put{$\bullet $} at 2 .5
\put{$\bullet $} at 3 .5
\put{$\bullet $} at 4 .5
\put{$\bullet$} at 5 .5
\put{$\bullet$} at 6 .5
\endpicture}  \\
p^{(1)}&  = & 
{ \beginpicture
 \setcoordinatesystem units <0.4cm, 0.15cm>        
\setplotarea x from .5 to 6.5, y from -1 to 1
\setquadratic
\setdashes  <.4mm,.6mm>\plot 1 .5      2  -1.25      3  .5 /
\setsolid
\plot 4 .5      4.5  -.75      5  .5 /
\put{$\bullet $} at 1 .5
\put{$\bullet $} at 2 .5
\put{$\bullet $} at 3 .5
\put{$\bullet $} at 4 .5
\put{$\bullet$} at 5 .5
\put{$\whitebull$} at 6 .5
\endpicture}  \\
p_+&  = & 
{ \beginpicture
 \setcoordinatesystem units <0.4cm, 0.15cm>        
\setplotarea x from .5 to 6.5, y from -1 to 1
\setquadratic
\plot 4 .5      4.5  -.75      5  .5 /
\put{$\whitebull $} at 1 .5
\put{$\bullet $} at 2 .5
\put{$\whitebull $} at 3 .5
\put{$\bullet $} at 4 .5
\put{$\bullet$} at 5 .5
\put{$\whitebull$} at 6 .5
\endpicture}  \\
\end{array}
\qquad \qquad\quad
\begin{array} {rcl}
p &  = &  
{ \beginpicture
 \setcoordinatesystem units <0.4cm, 0.15cm>        
\setplotarea x from .5 to 6.5, y from -1 to 1
\setquadratic
\setdashes  <.4mm,.6mm>\plot 3 .5      4.5  -1.25      6  .5 /
\setsolid
\plot 4 .5      4.5  -.75      5  .5 /
\put{$\bullet $} at 1 .5
\put{$\bullet $} at 2 .5
\put{$\bullet $} at 3 .5
\put{$\bullet $} at 4 .5
\put{$\bullet$} at 5 .5
\put{$\bullet$} at 6 .5
\endpicture}  \\
p_\bullet  &  = & 
{ \beginpicture
 \setcoordinatesystem units <0.4cm, 0.15cm>        
\setplotarea x from .5 to 6.5, y from -1 to 1
\setquadratic
\plot 4 .5      4.5  -.75      5  .5 /
\put{$\bullet $} at 1 .5
\put{$\bullet $} at 2 .5
\put{$\bullet $} at 3 .5
\put{$\bullet $} at 4 .5
\put{$\bullet$} at 5 .5
\put{$\bullet$} at 6 .5
\endpicture}  \\
p^{(1)}&  = & 
0 \\
p_+&  = & 
{ \beginpicture
 \setcoordinatesystem units <0.4cm, 0.15cm>        
\setplotarea x from .5 to 6.5, y from -1 to 1
\setquadratic
\plot 4 .5      4.5  -.75      5  .5 /
\put{$\bullet $} at 1 .5
\put{$\bullet $} at 2 .5
\put{$\whitebull $} at 3 .5
\put{$\bullet $} at 4 .5
\put{$\bullet$} at 5 .5
\put{$\whitebull$} at 6 .5
\endpicture}  \\
\end{array}$$

Now we define recursively
\begin{eqnarray}\label{basischange}
[p] &=& p - p_\bullet - \frac{u_{r-1}(x-1)}{u_{r}(x-1)}[p^{(1)}]   \\
{[p^{(1)}]} &=&  p^{(1)} - \big(p^{(1)}\big)_\bullet - \frac{u_{r-2}(x-1)}{u_{r-1}(x-1)}[p^{(2)}]  \\
\vdots  & & \vdots  \nonumber
\end{eqnarray} 
where $p^{(2)} = \left( p^{(1)}\right)^{(1)}$.    For example, applying these steps recursively to the Motzkin path  $p = { \beginpicture
 \setcoordinatesystem units <0.3cm, 0.15cm>        
\setplotarea x from 1 to 5, y from -1 to 1
\setquadratic
\setdashes  <.4mm,.6mm>\plot 3 .5      4 -1      5  .5 /
\setsolid
\put{$\whitebull $} at 1 .5
\put{$\whitebull $} at 2 .5
\put{$\bullet $} at 3 .5
\put{$\bullet $} at 4 .5
\put{$\bullet$} at 5 .5
\endpicture} 
\in \cP_5^2$, and omitting the argument  $x-1$ to simplify the display, gives
\begin{equation*}
[{ \beginpicture
 \setcoordinatesystem units <0.3cm, 0.15cm>        
\setplotarea x from 1 to 5, y from -1 to 1
\setquadratic
\setdashes  <.4mm,.6mm>\plot 3 .5      4 -1      5  .5 /\setsolid
\put{$\whitebull $} at 1 .5
\put{$\whitebull $} at 2 .5
\put{$\bullet $} at 3 .5
\put{$\bullet $} at 4 .5
\put{$\bullet$} at 5 .5
\endpicture} ] = 
{ \beginpicture
 \setcoordinatesystem units <0.3cm, 0.15cm>        
\setplotarea x from 1 to 5, y from -1 to 1
\setquadratic
\setdashes  <.4mm,.6mm>\plot 3 .5      4 -1      5  .5 / \setsolid
\put{$\whitebull $} at 1 .5
\put{$\whitebull $} at 2 .5
\put{$\bullet $} at 3 .5
\put{$\bullet $} at 4 .5
\put{$\bullet$} at 5 .5
\endpicture}  - { \beginpicture
 \setcoordinatesystem units <0.3cm, 0.15cm>        
\setplotarea x from 1 to 5, y from -1 to 1
\setquadratic
\put{$\whitebull $} at 1 .5
\put{$\whitebull $} at 2 .5
\put{$\bullet $} at 3 .5
\put{$\bullet $} at 4 .5
\put{$\bullet$} at 5 .5
\endpicture} 
- \frac{u_1}{u_2} \left(
{ \beginpicture
 \setcoordinatesystem units <0.3cm, 0.15cm>        
\setplotarea x from 1 to 5, y from -1 to 1
\setquadratic
\setdashes  <.4mm,.6mm>\plot 2 .5      2.5 -1      3  .5 /
\setsolid
\put{$\whitebull $} at 1 .5
\put{$\bullet $} at 2 .5
\put{$\bullet $} at 3 .5
\put{$\bullet $} at 4 .5
\put{$\whitebull $} at 5 .5
\endpicture} 
-
{ \beginpicture
 \setcoordinatesystem units <0.3cm, 0.15cm>        
\setplotarea x from 1 to 5, y from -1 to 1
\setquadratic
\put{$\whitebull $} at 1 .5
\put{$\bullet $} at 2 .5
\put{$\bullet $} at 3 .5
\put{$\bullet $} at 4 .5
\put{$\whitebull $} at 5 .5
\endpicture} \right)
+ \frac{u_0}{u_2} \left(
{ \beginpicture
 \setcoordinatesystem units <0.3cm, 0.15cm>        
\setplotarea x from 1 to 5, y from -1 to 1
\setquadratic
\setdashes  <.4mm,.6mm>\plot 1 .5      1.5 -1      2  .5 /
\setsolid
\put{$\bullet $} at 1 .5
\put{$\bullet $} at 2 .5
\put{$\whitebull $} at 3 .5
\put{$\bullet $} at 4 .5
\put{$\whitebull $} at 5 .5
\endpicture} 
-
{ \beginpicture
 \setcoordinatesystem units <0.3cm, 0.15cm>        
\setplotarea x from 1 to 5, y from -1 to 1
\setquadratic
\put{$\bullet $} at 1 .5
\put{$\bullet $} at 2 .5
\put{$\whitebull $} at 3 .5
\put{$\bullet $} at 4 .5
\put{$\whitebull $} at 5 .5
\endpicture} \right).
\end{equation*}
The set ${\tilde \cP_k^{r}} := \cP_k^{r,1} \sqcup \cP_k^{r,0} \sqcup \{\, [p]\, |\, p  \in \cP_k^{r,-1} \,\}$ is a basis for $\C_k^{(r)}$,  since the change of basis matrix between it
and $\cP_k^{r}$ is  unitriangular.  The next  lemma will help us compute the determinant of the Gram matrix with respect to this new basis.  Observe that the pivot edge of a diagram need not be connected to the $k$th vertex, as it moves to the left when the definition is applied recursively.

\begin{lemma}  \label{GramMatrix:PivotLemma}
Let $p, q \in \cP_k^r$ be such that $p$ has a pivot edge and there are $s$ vertical edges to the left of the pivot edge in $d_p^p$.  Then
$\langle [p], q \rangle = \frac{u_{s+1}(x-1)}{u_{s}(x-1)}  \langle p_\bullet, q \rangle$, if the pivot edge is part of an inner loop in the product $d_p^p d_q^q$. Otherwise, $\langle [p], q \rangle = 0$.
\end{lemma}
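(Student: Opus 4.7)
The plan is to prove the lemma by induction on $s \geq 0$, using the recursive definition
$$[p] = p - p_\bullet - \frac{u_{s-1}(x-1)}{u_s(x-1)}[p^{(1)}]$$
together with the Chebyshev three-term recurrence
$$u_{s+1}(x-1) = (x-1)\,u_s(x-1) - u_{s-1}(x-1),$$
equivalently $\frac{u_{s+1}}{u_s} = (x-1) - \frac{u_{s-1}}{u_s}$. This identity is precisely what converts the recursive coefficient $\frac{u_{s-1}}{u_s}$ in the definition of $[p]$ into the target coefficient $\frac{u_{s+1}}{u_s}$ of the lemma, so it will drive the inductive step.

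For the base case $s = 0$, no white vertex lies to the left of the pivot, so $p^{(1)} = 0$ by definition and $[p] = p - p_\bullet$. The diagrams $d_p^p$ and $d_{p_\bullet}^{p_\bullet}$ differ only by the pivot edges inserted in the top and bottom rows at positions $(i_0, i_1)$. I would compare the products $d_p^p d_q^q$ and $d_{p_\bullet}^{p_\bullet} d_q^q$ locally at the pivot: if the pivot edge is part of an inner loop (i.e., $q$ has a matching horizontal edge in its top row at $(i_0, i_1)$), then $d_p^p d_q^q$ has exactly one additional closed cycle from this matching, so $\langle p, q\rangle = x\langle p_\bullet, q\rangle$ and $\langle [p], q\rangle = (x-1)\langle p_\bullet, q\rangle = \frac{u_1(x-1)}{u_0(x-1)} \langle p_\bullet, q\rangle$. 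Otherwise, a short case analysis on how the pivot endpoints connect through $q$ shows $\langle p, q\rangle = \langle p_\bullet, q\rangle$, so $\langle[p], q\rangle = 0$.

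For the inductive step with $s \geq 1$, apply the induction hypothesis to $p^{(1)}$, which has $s-1$ vertical edges to the left of its shifted pivot at $(j_1, i_0)$. The argument proceeds by case analysis on whether the pivots of $p$ (at $(i_0, i_1)$) and of $p^{(1)}$ (at $(j_1, i_0)$) are part of inner loops in their respective products; a crucial observation is that the two inner-loop conditions are typically mutually exclusive (they would force $q$ to contain overlapping horizontal edges sharing the endpoint $i_0$). Combining the inductive expression for $\langle [p^{(1)}], q\rangle$, the direct computation of $\langle p, q\rangle - \langle p_\bullet, q\rangle$ (which uses the same local comparison as the base case, now in the presence of additional vertical edges to the left), and the Chebyshev identity above, yields the stated ratio when $p$'s pivot is in an inner loop and produces cancellation giving zero otherwise.

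The main obstacle will be the combinatorial bookkeeping in the inductive step: precisely tracking how the $s$ vertical edges to the left of the pivot propagate through both $d_p^p d_q^q$ and $d_{p^{(1)}}^{p^{(1)}} d_q^q$ without creating extraneous loops, and verifying that $\langle (p^{(1)})_\bullet, q\rangle$ relates to $\langle p_\bullet, q\rangle$ in the way required for the Chebyshev telescoping to produce exactly $\frac{u_{s+1}}{u_s}$. The subtlety is that moving the pivot one step left swaps a white vertex for a black one, potentially altering the local loop structure of the products; checking that this alteration produces only the contribution captured by the Chebyshev recurrence is the technical heart of the argument.
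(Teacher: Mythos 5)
The proposal has a fundamental gap: the induction hypothesis it relies on is false. Applying the stated lemma to $p^{(1)}$ with $s-1$ vertical edges to the left of its pivot does not give the right value of $\langle [p^{(1)}], q\rangle$, precisely because $p^{(1)}$ has a vertical edge at position $k$ to the \emph{right} of its pivot, a configuration that never occurs for the original $p\in\cP_k^{r,-1}$ and that the lemma's dichotomy does not correctly describe.

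Here is a concrete failure. Take $p=(1,1,1,-1,-1)\in\cP_5^1$ (so $p$ has a white vertex at $1$ and arcs $(2,5),(3,4)$ with pivot $(2,5)$, hence $s=1$) and $q=(1,1,0,0,-1)\in\cP_5^1$ (white at $1$, arc $(2,5)$, isolated $3,4$). The pivot of $p$ is in a loop in $d_p^p d_q^q$, and one computes $\langle p,q\rangle=x$, $\langle p_\bullet,q\rangle=1$, so the lemma's assertion $\langle[p],q\rangle=\tfrac{u_2(x-1)}{u_1(x-1)}\langle p_\bullet,q\rangle$ is what needs proving. Now $p^{(1)}$ has pivot $(1,2)$, arc $(3,4)$, and a white vertex at $5$, so $s'=0$ and $p^{(2)}=0$. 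In $d_{p^{(1)}}^{p^{(1)}}d_q^q$ the pivot $(1,2)$ lies on a \emph{propagating} path (from the $q$-vertical at $1$, through the pivot, through the $q$-arc $(2,5)$, to the $p^{(1)}$-vertical at $5$), so it is not in a loop, and the lemma applied to $(p^{(1)},q)$ with $s'=0$ would predict $\langle[p^{(1)}],q\rangle=0$. But a direct computation gives $\langle p^{(1)},q\rangle=1$ and $\langle (p^{(1)})_\bullet,q\rangle=0$, so $\langle[p^{(1)}],q\rangle = 1\neq 0$. (This nonzero value, equal to $\langle p_\bullet,q\rangle$, is exactly what is needed to cancel against the $\tfrac{u_{s-1}}{u_s}$ coefficient and produce $\tfrac{u_{s+1}}{u_s}\langle p_\bullet,q\rangle$, but the induction hypothesis would incorrectly report it as zero.) More generally, whenever the pivot of $p$ is in a loop and $\langle p_\bullet,q\rangle\neq 0$, the paper's Case~1 observations force $\langle[p^{(1)}],q\rangle=\langle p_\bullet,q\rangle\neq0$ while $\langle(p^{(1)})_\bullet,q\rangle=0$, so whichever branch of the lemma one applies to $p^{(1)}$, the IH returns $0$ and the induction breaks.

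Your base-case claim has the same hidden defect: ``otherwise $\langle p,q\rangle=\langle p_\bullet,q\rangle$'' is not true in general when $s=0$. If the pivot lies on a path that propagates via a vertical edge to the \emph{right} of the pivot (which can happen for $p^{(i)}$ once $i\geq 1$), then removing the pivot kills the propagation, giving $\langle p_\bullet,q\rangle=0$ while $\langle p,q\rangle\neq 0$. For the original $p\in\cP_k^{r,-1}$ with $s=0$ one has $r=0$ and no vertical edges at all, so this cannot happen; but your induction must pass through the $p^{(i)}$, where it does.

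The paper's proof is \emph{not} a clean induction on $s$. It is a four-way case analysis on the structure of the path through the pivot in $d_p^p d_q^q$ (loop; propagating from a $q$-vertical to a $p$-vertical; path with no $q$-vertical; path with a $q$-vertical but no $p$-vertical). In the loop case, the paper does not invoke the lemma on $p^{(1)}$: it expands $[p^{(1)}]$ one more level and establishes by direct diagrammatic comparison the three identities $\langle p^{(1)},q\rangle=\langle p_\bullet,q\rangle$, $\langle(p^{(1)})_\bullet,q\rangle=0$, $\langle[p^{(2)}],q\rangle=0$. The only genuine induction on $s$ occurs in the last subcase of Case 4, where the IH is applied to a $p^{(1)}$ that is explicitly verified to again fall into Case 3 or Case 4. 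To salvage your approach you would need to strengthen the statement being proved by induction to one that correctly handles the mixed left/right vertical-edge configurations of the $p^{(i)}$ — effectively you would be rediscovering the paper's observations (1)--(4) of Case~1, at which point the proof is really the paper's case analysis in disguise.
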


\begin{proof} {\it Throughout the proof, we  omit the argument $x-1$ from the polynomials $u_s$ to simplify the notation.}
We prove the result in four cases depending on the form of the path that contains the pivot edge in the product $d_p^p d_q^q$.

\medskip
\noindent{\bf Case 1}:  The pivot edge of $p$ is part of a loop in the middle row of $d_p^p d_q^q$.
\medskip

 First assume that there are $s \ge 2$ vertical edges to the left of the pivot edge. Then the products $d_p^p d_q^q$ and $d_{p_\bullet}^{p_\bullet} d_q^q$ take the following  forms, respectively,
$$
d_p^p d_q^q = 
\begin{array}{c}
 \begin{array}{c}{ \beginpicture
 \setcoordinatesystem units <0.35cm, 0.4cm>        
\setplotarea x from -2 to 10, y from -1 to 1
\plot -2 -1 10 -1 10 1 -2 1 -2 -1 /
\plot -.25 1 -.25 -1 /
\plot 1.25 1 1.25 -1 /
\setquadratic
\setdashes  <.4mm,.6mm>\plot 4 1   5 .5   6 1 /
\plot 4 -1   5 -.5   6 -1 /
\setsolid
\plot 7 -1  7.5 -.5   8 -1 /
\plot 7 1  7.5 .5   8 1 /
\plot 2 -1  2.5 -.5   3 -1 /
\plot 2 1  2.5 .5   3 1 /
\put{$\cdots$} at -1 0
\put{$\cdots$} at .6 0
\put{$\cdots$} at 9 0
\endpicture}\end{array} \\
\begin{array}{c}{ \beginpicture
 \setcoordinatesystem units <0.35cm, 0.4cm>        
\setplotarea x from -2 to 10, y from -1 to 1.25
\plot -2 -1 10 -1 10 1 -2 1 -2 -1 /
\setquadratic
\plot 6 -1  6.5 -.75   7 -1 /
\plot 6 1  6.5 .75   7 1 /
\plot 3 -1  3.5 -.75   4 -1 /
\plot 3 1  3.5 .75   4 1 /
\plot 2 -1  5.0 -.25   8 -1 /
\plot 2 1  5.0 .25   8 1 /
\put{$\cdots$} at -1 0
\put{$\cdots$} at .5 0
\put{$\cdots$} at 9 0
\endpicture}\end{array}
\end{array}
\quad \hbox{ and } \quad
d_{p_\bullet}^{p_\bullet} d_q^q = 
\begin{array}{c}
\begin{array}{c}{ \beginpicture
 \setcoordinatesystem units <0.35cm, 0.4cm>        
\setplotarea x from -2 to 10, y from -1 to 1
\plot -2 -1 10 -1 10 1 -2 1 -2 -1 /
\plot -.25 1 -.25 -1 /
\plot 1.25 1 1.25 -1 /
\setquadratic
\setdashes  <.4mm,.6mm>\setsolid
\plot 7 -1  7.5 -.5   8 -1 /
\plot 7 1  7.5 .5   8 1 /
\plot 2 -1  2.5 -.5   3 -1 /
\plot 2 1  2.5 .5   3 1 /
\put{$\cdots$} at -1 0
\put{$\cdots$} at .6 0
\put{$\cdots$} at 9 0
\endpicture}\end{array} \\
\begin{array}{c}{ \beginpicture
 \setcoordinatesystem units <0.35cm, 0.4cm>        
\setplotarea x from -2 to 10, y from -1 to 1.25
\plot -2 -1 10 -1 10 1 -2 1 -2 -1 /
\setquadratic
\plot 6 -1  6.5 -.75   7 -1 /
\plot 6 1  6.5 .75   7 1 /
\plot 3 -1  3.5 -.75   4 -1 /
\plot 3 1  3.5 .75   4 1 /
\plot 2 -1  5.0 -.25   8 -1 /
\plot 2 1  5.0 .25   8 1 /
\put{$\cdots$} at -1 0
\put{$\cdots$} at .5 0
\put{$\cdots$} at 9 0
\endpicture}\end{array}
\end{array}
$$
so we have $\langle p, q \rangle = x \langle p_\bullet, q \rangle$. The products $d_{p^{(1)}}^{p^{(1)}} d_q^q$ and  $d_{p^{(2)}}^{p^{(2)}} d_q^q$ take the following forms, respectively,
$$
d_{p^{(1)}}^{p^{(1)}} d_q^q =
\begin{array}{c}
 \begin{array}{c}{ \beginpicture
 \setcoordinatesystem units <0.4cm, 0.4cm>        
\setplotarea x from -2 to 10, y from -1 to 1
\plot -2 -1 10 -1 10 1 -2 1 -2 -1 /
\plot -.25 1 -.25 -1 /
\plot 6 -1 6 1 /
\setquadratic
\setdashes  <.4mm,.6mm>\plot 1.25 1   2.625 .25   4 1 /
\plot 1.25 -1   2.625 -.25   4 -1 /
\setsolid
\plot 7 -1  7.5 -.5   8 -1 /
\plot 7 1  7.5 .5   8 1 /
\plot 2 -1  2.5 -.5   3 -1 /
\plot 2 1  2.5 .5   3 1 /
\put{$\cdots$} at -1 0
\put{$\cdots$} at 9 0
\endpicture}\end{array} \\
\begin{array}{c}{ \beginpicture
 \setcoordinatesystem units <0.4cm, 0.4cm>        
\setplotarea x from -2 to 10, y from -1 to 1.25
\plot -2 -1 10 -1 10 1 -2 1 -2 -1 /
\setquadratic
\plot 6 -1  6.5 -.75   7 -1 /
\plot 6 1  6.5 .75   7 1 /
\plot 3 -1  3.5 -.75   4 -1 /
\plot 3 1  3.5 .75   4 1 /
\plot 2 -1  5.0 -.25   8 -1 /
\plot 2 1  5.0 .25   8 1 /
\put{$\cdots$} at -1 0
\put{$\cdots$} at .5 0
\put{$\cdots$} at 9 0
\endpicture}\end{array}
\end{array}
\hbox{ and } \quad
d_{p^{(2)}}^{p^{(2)}}  d_q^q = 
\begin{array}{c}
\begin{array}{c}{ \beginpicture
 \setcoordinatesystem units <0.4cm, 0.4cm>        
\setplotarea x from -2 to 10, y from -1 to 1
\plot -2 -1 10 -1 10 1 -2 1 -2 -1 /
\plot 6 -1 6 1 /
\plot 4 -1 4 1 /
\setquadratic
\setdashes  <.4mm,.6mm>\plot -.25 1   .5 .25   1.25 1 /
\plot -.25 -1   .5 -.25   1.25 -1 /
\setsolid
\plot 7 -1  7.5 -.5   8 -1 /
\plot 7 1  7.5 .5   8 1 /
\plot 2 -1  2.5 -.5   3 -1 /
\plot 2 1  2.5 .5   3 1 /
\put{$\cdots$} at -1 0
\put{$\cdots$} at 9 0
\endpicture}\end{array} \\
\begin{array}{c}{ \beginpicture
 \setcoordinatesystem units <0.4cm, 0.4cm>        
\setplotarea x from -2 to 10, y from -1 to 1.25
\plot -2 -1 10 -1 10 1 -2 1 -2 -1 /
\setquadratic
\plot 6 -1  6.5 -.75   7 -1 /
\plot 6 1  6.5 .75   7 1 /
\plot 3 -1  3.5 -.75   4 -1 /
\plot 3 1  3.5 .75   4 1 /
\plot 2 -1  5.0 -.25   8 -1 /
\plot 2 1  5.0 .25   8 1 /
\put{$\cdots$} at -1 0
\put{$\cdots$} at .5 0
\put{$\cdots$} at 9 0
\endpicture}\end{array}
\end{array}.
$$
First, note that $\langle p^{(1)}, q \rangle = \langle p_\bullet, q \rangle$.
Second, observe that the rank goes down in the product $d_{p^{(2)}}^{p^{(2)}} d_q^q$. In fact, since the pivot edges move to the left, this will be true for all  $d_{p^{(i)}}^{p^{(i)}} d_q^q$ for $i \ge 2.$  Thus $\langle [p^{(2)}],q\rangle = 0$.  Furthermore, $\langle p^{(1)}_\bullet, q\rangle = 0$, since as can be seen in the product $d_{p^{(1)}}^{p^{(1)}} d_q^q$, it is not possible for the rightmost edge of $d_{p^{(1)}}^{p^{(1)}}$ to propagate from bottom to top when the pivot edge of $p^{(1)}$ is removed.   It follows that
\begin{align*}
\langle [p], q \rangle &= \langle p, q \rangle - \langle p_\bullet, q\rangle - \frac{u_{s-1}}{u_s} \left[ \langle p^{(1)},q\rangle-
\langle p^{(1)}_\bullet,q\rangle - \frac{u_{s-2}}{u_{s-1}} \langle [p^{(2)},q\rangle \right]
\\
&= (x-1) \langle p_\bullet, q\rangle - \frac{u_{s-1}}{u_s} \langle p_\bullet,q\rangle 
= \frac{u_{s+1}}{u_s} \langle p_\bullet,q\rangle,
\end{align*}
using the recursion $(x-1) u_s - u_{s-1} = u_{s+1}$.

When $s = 1$, there is only one vertical edge to the  left of the pivot, so $p^{(2)} = 0$ and the proof follows as above, since we still have  $\langle p^{(2)}, q \rangle = 0$.  If $s = 0$, then $p^{(1)} = p^{(2)} = 0$, and the above calculation shows that $\langle [p],q] \rangle = (x-1) \langle p_\bullet,q \rangle = \frac{u_1}{u_0}  \langle p_\bullet,q \rangle$, as desired.

\medskip
\noindent{\bf Case 2}:  The pivot edge of $p$ is part of a path that propagates from bottom to top   in $d_p^p d_q^q$.
\medskip

 In this case, we consider  products of the form 
$$
d_p^p d_q^q = 
\begin{array}{c}
 \begin{array}{c}{ \beginpicture
 \setcoordinatesystem units <0.35cm, 0.4cm>        
\setplotarea x from 0 to 11, y from -1 to 1
\plot 0 -1 11 -1 11 1 0 1 0 -1 /
\plot 2 -1 2 1 /
\setquadratic
\setdashes  <.4mm,.6mm>\plot 6 1   7.375 .25   8.75 1 /
\plot 6 -1   7.375 -.25   8.75 -1 /
\setsolid
\plot 7 -1  7.5 -.5   8 -1 /
\plot 7 1  7.5 .5   8 1 /
\plot 3 -1  4 -.5   5 -1 /
\plot 3 1  4 .5    5 1 /
\put{$\cdots$} at 1 0
\put{$\cdots$} at 10 0
\endpicture}\end{array} \\
\begin{array}{c}{ \beginpicture
 \setcoordinatesystem units <0.35cm, 0.4cm>        
\setplotarea x from 0 to 11, y from -1 to 1.25
\plot 0 -1 11 -1 11 1 0 1 0 -1 /
\plot 7 -1 7 1 /
\setquadratic
\plot 2 -1  2.5 -.25   3 -1 /
\plot 2 1  2.5 .25   3 1 /
\plot 5 -1  5.5 -.5   6 -1 /
\plot 5 1  5.5 .5   6 1 /
\plot 8 -1  8.375 -.5   8.75 -1 /
\plot 8 1  8.375 .5   8.75 1 /
\put{$\cdots$} at 1 0
\put{$\cdots$} at 10 0
\endpicture}\end{array}
\end{array}
\quad \hbox{ and } \quad
d_{p^{(1)}}^{p^{(1)}} d_{q}^{q} = 
\begin{array}{c}
 \begin{array}{c}{ \beginpicture
 \setcoordinatesystem units <0.35cm, 0.4cm>        
\setplotarea x from 0 to 11, y from -1 to 1
\plot 0 -1 11 -1 11 1 0 1 0 -1 /
\plot 8.75 1 8.75 -1 /
\setquadratic
\setdashes  <.4mm,.6mm>\plot 2 1   4 .25   6 1 /
\plot 2 -1   4 -.25   6 -1 /
\setsolid
\plot 7 -1  7.5 -.5   8 -1 /
\plot 7 1  7.5 .5   8 1 /
\plot 3 -1  4 -.5   5 -1 /
\plot 3 1  4 .5    5 1 /
\put{$\cdots$} at 1 0
\put{$\cdots$} at 10 0
\endpicture}\end{array} \\
\begin{array}{c}{ \beginpicture
 \setcoordinatesystem units <0.35cm, 0.4cm>        
\setplotarea x from 0 to 11, y from -1 to 1.25
\plot 0 -1 11 -1 11 1 0 1 0 -1 /
\plot 7 -1 7 1 /
\setquadratic
\plot 2 -1  2.5 -.25   3 -1 /
\plot 2 1  2.5 .25   3 1 /
\plot 5 -1  5.5 -.5   6 -1 /
\plot 5 1  5.5 .5   6 1 /
\plot 8 -1  8.375 -.5   8.75 -1 /
\plot 8 1  8.375 .5   8.75 1 /
\put{$\cdots$} at 1 0
\put{$\cdots$} at 10 0
\endpicture}\end{array}
\end{array}.
$$
Now, $\langle p_\bullet, q \rangle = 0$, since the path no longer propagates  when the pivot edge is removed.   Furthermore, Case 1 tells us that $\langle [p^{(1)}], q \rangle = \frac{u_s}{u_{s-1}}\langle p^{(1)}_\bullet, q \rangle$, since $d_{p^{(1)}}^{p^{(1)}}$ now has $s-1$ vertical edges to the left of the pivot edge.  Also, by comparing the above products of diagrams (with the pivot edge removed in the second product), we see that $\langle p,q \rangle = \langle p^{(1)}_\bullet, q \rangle$.   Thus,
$$
\langle [p], q \rangle = \langle p,q \rangle - \langle p_\bullet, q \rangle - \frac{u_{s-1}}{u_{s}} \langle [p^{(1)}], q \rangle 
= \langle p,q \rangle - \frac{u_{s-1}}{u_{s}} \frac{u_{s}}{u_{s-1}}  \langle p^{(1)}_\bullet,q \rangle 
 = \langle p,q \rangle - \langle p,q\rangle = 0.
$$

\medskip
\noindent{\bf Case 3}:  In the product $d_p^p d_q^q$, the pivot edge of $p$ is not in a closed loop and is not in a path that contains a vertical edge of $q$.
\medskip

Consider products of the form shown below, where either of the dashed vertical edges may or may not be present, 
$$
d_p^p d_q^q = 
\begin{array}{c}
 \begin{array}{c}{ \beginpicture
 \setcoordinatesystem units <0.35cm, 0.4cm>        
\setplotarea x from 0 to 11, y from -1 to 1.
\plot 0 -1 11 -1 11 1 0 1 0 -1 /
\setdashes  <.4mm,.6mm>\plot 2 -1 2 1 /
\plot 3 -1 3 1 /
\setsolid
\setquadratic
\setdashes  <.4mm,.6mm>\plot 8 1   8.5 .5   9 1 /
\plot 8 -1   8.5 -.5   9 -1 /
\setsolid
\plot 5 -1  5.5 -.5   6 -1 /
\plot 5 1  5.5 .5   6 1 /
\plot 4 -1  5.5 -.25   7 -1 /
\plot 4 1  5.5 .25    7 1 /
\put{$\cdots$} at 1 0
\put{$\cdots$} at 10 0
\endpicture}\end{array} \\
\begin{array}{c}{ \beginpicture
 \setcoordinatesystem units <0.35cm, 0.4cm>        
\setplotarea x from 0 to 11, y from -1 to 1.25
\plot 0 -1 11 -1 11 1 0 1 0 -1 /
\setquadratic
\plot 2 -1  3.5 -.25   5 -1 /
\plot 2 1  3.5 .25   5 1 /
\plot 3 -1  3.5 -.5   4 -1 /
\plot 3 1  3.5 .5   4 1 /
\plot 7 -1  7.5 -.5   8 -1 /
\plot 7 1  7.5 .5   8 1 /
\plot 6 -1  7.5 -.25   9 -1 /
\plot 6 1  7.5 .25   9 1 /
\put{$\cdots$} at 1 0
\put{$\cdots$} at 10 0
\endpicture}\end{array}
\end{array}
\quad\hbox{ and }\quad
d_{p^{(1)}}^{p^{(1)}} d_{q}^{q} = 
\begin{array}{c}
 \begin{array}{c}{ \beginpicture
 \setcoordinatesystem units <0.35cm, 0.4cm>        
\setplotarea x from 0 to 11, y from -1 to 1
\plot 0 -1 11 -1 11 1 0 1 0 -1 /
\plot 9 -1 9 1 /
\setdashes  <.4mm,.6mm>\plot 2 -1 2 1 /
\plot 3 -1 3 1 /
\setsolid
\setquadratic
\setdashes  <.4mm,.6mm>\plot 3.5 .5   6 .1   8 1 /
\plot 3.5 -.5   6 -.1   8 -1 /
\setsolid
\plot 5 -1  5.5 -.5   6 -1 /
\plot 5 1  5.5 .5   6 1 /
\plot 4 -1  5.5 -.25   7 -1 /
\plot 4 1  5.5 .25    7 1 /
\put{$\cdots$} at 1 0
\put{$\cdots$} at 10 0
\endpicture}\end{array} \\
\begin{array}{c}{ \beginpicture
 \setcoordinatesystem units <0.35cm, 0.4cm>        
\setplotarea x from 0 to 11, y from -1 to 1.25
\plot 0 -1 11 -1 11 1 0 1 0 -1 /
\setquadratic
\plot 2 -1  3.5 -.25   5 -1 /
\plot 2 1  3.5 .25   5 1 /
\plot 3 -1  3.5 -.5   4 -1 /
\plot 3 1  3.5 .5   4 1 /
\plot 7 -1  7.5 -.5   8 -1 /
\plot 7 1  7.5 .5   8 1 /
\plot 6 -1  7.5 -.25   9 -1 /
\plot 6 1  7.5 .25   9 1 /
\put{$\cdots$} at 1 0
\put{$\cdots$} at 10 0
\endpicture}\end{array}
\end{array}.
$$
Observe that $\langle [p^{(1)}], q \rangle = 0$, since the rank goes down in all products $d_{p^{(i)}}^{p^{(i)}} d_q^q$ for $i \ge 1$ (the pivot edge keeps moving left and so the rightmost vertical  edge in $d_{p^{(i)}}^{p^{(i)}}$ never propagates).  Furthermore,  $\langle p, q \rangle = \langle p_\bullet, q \rangle$, since the pivot edge does not form a loop, so $\langle [p],q \rangle = \langle p,q \rangle - \langle p_\bullet,q \rangle - \frac{u_{s-1}}{u_{s}} \langle [p^{(1)}],q \rangle  = 0$. 

\medskip
\noindent{\bf Case 4}:   In the product $d_p^p d_q^q$, the pivot edge of $p$ is in a path that contains a vertical edge of $d_q^q$ but does not does not contain a vertical edge of $p$.
\medskip

First we consider products of the form shown below, where when following the path starting from the vertical edge in $d_q^q$,  we first hit the left endpoint of the pivot edge, 
$$
d_p^p d_q^q = 
\begin{array}{c}
 \begin{array}{c}{ \beginpicture
 \setcoordinatesystem units <0.35cm, 0.4cm>        
\setplotarea x from 0 to 11, y from -1 to 1
\plot 0 -1 11 -1 11 1 0 1 0 -1 /
\plot 0 -1 0 1 /
\plot 2 -1 2 1 /
\setquadratic
\setdashes  <.4mm,.6mm>\plot 6 1   7.5 .25   9 1 /
\plot 6 -1   7.5 -.25   9 -1 /
\setsolid
\plot 7 -1  7.5 -.5   8 -1 /
\plot 7 1  7.5 .5   8 1 /
\plot 4 -1  4.5 -.5   5 -1 /
\plot 4 1  4.5 .5    5 1 /
\put{$\cdots$} at 1 0
\put{$\cdots$} at 10 0
\endpicture}\end{array} \\
\begin{array}{c}{ \beginpicture
 \setcoordinatesystem units <0.35cm, 0.4cm>        
\setplotarea x from 0 to 11, y from -1 to 1.25
\plot 0 -1 11 -1 11 1 0 1 0 -1 /
\plot 4 1 4 -1 /
\setquadratic
\plot 8 -1  8.5 -.5   9 -1 /
\plot 8 1  8.5 .5   9 1 /
\plot 5 -1  5.5 -.5   6 -1 /
\plot 5 1  5.5 .5   6 1 /
\put{$\cdots$} at 1 0
\put{$\cdots$} at 10 0
\endpicture}\end{array}
\end{array}
\quad\hbox{ and }\quad
d_{p^{(1)}}^{p^{(1)}} d_{q}^{q} = 
\begin{array}{c}
 \begin{array}{c}{ \beginpicture
 \setcoordinatesystem units <0.35cm, 0.4cm>        
\setplotarea x from 0 to 11, y from -1 to 1
\plot 0 -1 11 -1 11 1 0 1 0 -1 /
\plot 0 -1 0 1 /
\plot 9 1 9 -1 /
\setquadratic
\setdashes  <.4mm,.6mm>\plot 2 1   4 .25   6 1 /
\plot 2 -1   4 -.25   6 -1 /
\setsolid
\plot 7 -1  7.5 -.5   8 -1 /
\plot 7 1  7.5 .5   8 1 /
\plot 4 -1  4.5 -.5   5 -1 /
\plot 4 1  4.5 .5    5 1 /
\put{$\cdots$} at 1 0
\put{$\cdots$} at 10 0
\endpicture}\end{array} \\
\begin{array}{c}{ \beginpicture
 \setcoordinatesystem units <0.35cm, 0.4cm>        
\setplotarea x from 0 to 11, y from -1 to 1.25
\plot 0 -1 11 -1 11 1 0 1 0 -1 /
\plot 4 1 4 -1 /
\setquadratic
\plot 8 -1  8.5 -.5   9 -1 /
\plot 8 1  8.5 .5   9 1 /
\plot 5 -1  5.5 -.5   6 -1 /
\plot 5 1  5.5 .5   6 1 /
\put{$\cdots$} at 1 0
\put{$\cdots$} at 10 0
\endpicture}\end{array}
\end{array}.
$$
We have $\langle p^{(1)}, q \rangle = 0$, since the rank will go down in all products as the pivot edge moves left.  Furthermore,  $\langle p,q \rangle = \langle p_\bullet, q \rangle = 0$, since no loop is removed,  so $\langle [p],q \rangle = \frac{u_{s-1}}{u_{s}} \langle [p^{(1)}], q \rangle = 0$.

Second we consider products of the form shown below, where the path starting from the vertical edge in $d_q^q$  first hits the right endpoint of the pivot edge,  In this case, we will use induction on $s$ (the number of vertical edges to the left of the pivot edge) to prove that $\langle [p],q \rangle = 0$.  First, let $s \ge 1$,  
$$
d_p^p d_q^q = 
\begin{array}{c}
 \begin{array}{c}{ \beginpicture
 \setcoordinatesystem units <0.35cm, 0.4cm>        
\setplotarea x from 0 to 11, y from -1 to 2.4
\plot 0 -1 11 -1 11 1 0 1 0 -1 /
\plot 0 -1 0 1 /
\plot 2 -1 2 1 /
\setquadratic
\setdashes  <.4mm,.6mm>\plot 6 1   7.5 .25   9 1 /
\plot 6 -1   7.5 -.25   9 -1 /
\setsolid
\plot 7 -1  7.5 -.5   8 -1 /
\plot 7 1  7.5 .5   8 1 /
\plot 4 -1  4.5 -.5   5 -1 /
\plot 4 1  4.5 .5    5 1 /
\put{$\cdots$} at 1 0
\put{$\cdots$} at 10 0
\put{$\overbrace{\phantom{123456}}$} at 1.75 1.25 
\put{{$\scriptstyle{s ~ vertical ~ edges}$}} at 3.2 2.4 
\endpicture}\end{array} \\
\begin{array}{c}{ \beginpicture
 \setcoordinatesystem units <0.35cm, 0.4cm>        
\setplotarea x from 0 to 11, y from -1 to 1.25
\plot 0 -1 11 -1 11 1 0 1 0 -1 /
\plot 7 1 7 -1 /
\setquadratic
\plot 8 -1  8.5 -.5   9 -1 /
\plot 8 1  8.5 .5   9 1 /
\plot 4 -1  5 -.25   6 -1 /
\plot 4 1  5 .25   6 1 /
\put{$\cdots$} at 1 0
\put{$\cdots$} at 10 0
\endpicture}\end{array}
\end{array}
\quad\hbox{ and }\quad
d_{p^{(1)}}^{p^{(1)}} d_{q}^{q} = 
\begin{array}{c}
 \begin{array}{c}{ \beginpicture
 \setcoordinatesystem units <0.35cm, 0.4cm>        
\setplotarea x from 0 to 11, y from -1 to 2.4
\plot 0 -1 11 -1 11 1 0 1 0 -1 /
\plot 0 -1 0 1 /
\plot 9 1 9 -1 /
\setquadratic
\setdashes  <.4mm,.6mm>\plot 2 1   4 .25   6 1 /
\plot 2 -1   4 -.25   6 -1 /
\setsolid
\plot 7 -1  7.5 -.5   8 -1 /
\plot 7 1  7.5 .5   8 1 /
\plot 4 -1  4.5 -.5   5 -1 /
\plot 4 1  4.5 .5    5 1 /
\put{$\cdots$} at 1 0
\put{$\cdots$} at 10 0
\put{$\overbrace{\phantom{1}}$} at 1.0 1.25 
\put{{$\scriptstyle{s-1 ~ vertical ~ edges}$}} at 3.75 2.4 
\endpicture}\end{array} \\
\begin{array}{c}{ \beginpicture
 \setcoordinatesystem units <0.35cm, 0.4cm>        
\setplotarea x from 0 to 11, y from -1 to 1.25
\plot 0 -1 11 -1 11 1 0 1 0 -1 /
\plot 7 1 7 -1 /
\setquadratic
\plot 8 -1  8.5 -.5   9 -1 /
\plot 8 1  8.5 .5   9 1 /
\plot 4 -1  5 -.25   6 -1 /
\plot 4 1  5 .25   6 1 /
\put{$\cdots$} at 1 0
\put{$\cdots$} at 10 0
\endpicture}\end{array}
\end{array}.
$$

Again, $\langle p,q \rangle = \langle p_\bullet, q \rangle = 0$, so $\langle [p],q \rangle = \frac{u_{s-1}}{u_{s}} \langle [p^{(1)}], q \rangle$. If, in $d_{p^{(1)}}^{p^{(1)}} d_q^q$, the path containing the pivot edge does not reach the bottom of $d_q^q$, then we are in Case 3, and $\langle [p^{(1)}], q \rangle = 0.$  If, in  $d_{p^{(1)}}^{p^{(1)}} d_q^q$, the path containing the pivot edge reaches the bottom of $d_q^q$, then since we have reduced the number of vertical edges that are to the left of the pivot edge, we apply induction to conclude that $\langle [p],q \rangle = \frac{u_{s-1}}{u_{s}} \langle [p^{(1)}], q \rangle = 0$.  In the base case, when $s = 0$, we have $p^{(1)} = 0$, so the result follows immediately.
\end{proof}

\begin{prop}  \label{GramMatrixProp}
Let $p, q \in \cP_k^r$ with $p \in \cP_k^{r,-1}$. Then
\begin{enumerate} 
\item[{\rm (i)}] $\langle [p], q \rangle = 0$, if $q \in \cP_k^{r,0} \sqcup \cP_k^{r,1}$,
\item[{\rm (ii)}] $\langle [p], [q] \rangle = \frac{u_{r+1}(x-1)}{u_{r}(x-1)} \langle p_+, q_+ \rangle$, if $q \in \cP_k^{r,-1}$.
\end{enumerate}
\end{prop}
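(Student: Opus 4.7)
The plan is to derive both parts as consequences of Lemma \ref{GramMatrix:PivotLemma} (the Pivot Lemma) together with the recursive basis change $[p] = p - p_\bullet - \tfrac{u_{r-1}(x-1)}{u_r(x-1)}[p^{(1)}]$ from Section \ref{sec:BasisChange}.

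For part (i), I would apply the Pivot Lemma directly. When $p \in \cP_k^{r,-1}$, the middle-$k$ vertex of the product $d_p^p d_q^q$ receives the upper end of $p$'s pivot edge from the $d_p^p$ side. If $q \in \cP_k^{r,0}$, then vertex $k$ of $q$ is isolated, so middle-$k$ has degree one from the $d_q^q$ side and the pivot cannot close into an inner loop. If $q \in \cP_k^{r,1}$, then middle-$k$ receives a vertical edge from $d_q^q$ going straight down to the boundary vertex bottom-$k$, so any path containing the pivot exits at that boundary rather than looping back. In either case the Pivot Lemma yields $\langle [p], q \rangle = 0$.

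For part (ii), I would first expand using the definition of $[q]$:
$$\langle [p], [q] \rangle = \langle [p], q \rangle - \langle [p], q_\bullet \rangle - \tfrac{u_{r-1}(x-1)}{u_r(x-1)} \langle [p], [q^{(1)}] \rangle.$$
Since $q_\bullet \in \cP_k^{r,0}$, part (i) kills the second term. For the third, observe that $q^{(1)} \in \cP_k^{r,1}$ because moving the pivot of $q$ leftward leaves vertex $k$ white; unwinding the recursion for $[q^{(1)}]$, $[q^{(2)}], \ldots$, every 1-factor that appears in the expansion still has vertex $k$ white, so part (i) annihilates each term and $\langle [p], [q^{(1)}] \rangle = 0$ by induction on the recursion depth. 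Thus $\langle [p], [q] \rangle = \langle [p], q \rangle$. Now I apply the Pivot Lemma, noting that $s = r$: indeed $p \in \cP_k^{r,-1}$ means the pivot has one endpoint at $k$, and the rule that white vertices cannot sit between the endpoints of an edge forces all $r$ white vertices of $p$ to lie to the left of the other endpoint of the pivot. This gives $\langle [p], q \rangle = \tfrac{u_{r+1}(x-1)}{u_r(x-1)} \langle p_\bullet, q \rangle$ when the pivot of $p$ lies on an inner loop of $d_p^p d_q^q$, and the quantity vanishes otherwise.

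The crux, and what I expect to be the main obstacle, is to establish the identity $\langle p_\bullet, q \rangle = \langle p_+, q_+ \rangle$, with both sides vanishing simultaneously. The idea is to compare the two diagram products by case analysis as in the proof of the Pivot Lemma: in $d_{p_\bullet}^{p_\bullet} d_q^q$ the middle-$k$ vertex has degree one and the pivot of $q$ ``contracts'' at that dead end, while in $d_{p_+}^{p_+} d_{q_+}^{q_+}$ the vertical edges at $k$ coming from $p_+$ and $q_+$ splice into a straight vertical edge and the new vertical edges at the former left ends of the two pivots reroute the remaining middle structure into a propagating vertical edge in the product. Tracing the resulting paths and comparing loop counts should show that the two products yield the same underlying Motzkin diagram with the same power of $x$ attached, whence the desired equality. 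Combining with the previous paragraph yields $\langle [p], [q] \rangle = \tfrac{u_{r+1}(x-1)}{u_r(x-1)} \langle p_+, q_+ \rangle$.
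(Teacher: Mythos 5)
Parts (i) and the initial reduction in (ii) are correct and match the paper: part (i) is a direct consequence of the Pivot Lemma (the pivot of $p$, which has an endpoint at vertex $k$, cannot lie on a closed loop in $d_p^p d_q^q$ when $q$ is isolated or white at $k$), and your expansion of $[q]$ together with part (i) legitimately reduces $\langle [p],[q]\rangle$ to $\langle [p],q\rangle$. You also correctly identify $s=r$ when $p\in\cP_k^{r,-1}$ (all $r$ white vertices of $p$ must sit strictly left of the pivot's left endpoint), so the Pivot Lemma gives $\langle [p],q\rangle = \tfrac{u_{r+1}(x-1)}{u_r(x-1)}\langle p_\bullet,q\rangle$ in the loop case --- this is in fact correct and fixes a sign-typo in the paper, which at this step prints $u_{r-1}$ for $u_{r+1}$.

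The gap is in your final step, where you try to prove the unconditional statement ``$\langle p_\bullet, q \rangle = \langle p_+, q_+ \rangle$, with both sides vanishing simultaneously.'' This is false. Take $k=3$, $r=0$, $p=(0,1,-1)$ and $q=(1,0,-1)$, both in $\cP_3^{0,-1}$. The pivot edge of $p$ is $\{2,3\}$ and the rightmost edge of $q$ is $\{1,3\}$; in $d_p^p d_q^q$ they lie on a single open path (not a loop). One checks $\langle p_\bullet,q\rangle = 1$ (the middle row produces an open path and no loop, and ranks are trivially preserved since $r=0$), while $\langle p_+,q_+\rangle = 0$ (the vertical edge at position $2$ of $d_{p_+}^{p_+}$ hits the isolated vertex $2$ of $q_+$ and cannot propagate). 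So the two sides can differ, and in the ``pivot not on a loop'' case you cannot carry $\langle p_\bullet,q\rangle$ over to $\langle p_+,q_+\rangle$. What is actually needed --- and what the paper does --- is a genuine case split: when the pivot of $p$ lies on a loop of $d_p^p d_q^q$, compare the diagrams $d_{p_\bullet}^{p_\bullet}d_q^q$ and $d_{p_+}^{p_+}d_{q_+}^{q_+}$ directly (the loop becomes an open path in the first, and becomes two propagating strands in the second, so the loop counts and ranks work out to give equality); when the pivot does not lie on a loop, the Pivot Lemma already gives $\langle [p],q\rangle = 0$, and one separately argues $\langle p_+,q_+\rangle = 0$ because the absence of a middle-row path from the pivot's left endpoint $i$ to $q$'s edge endpoint $j$ means the vertical edges that $p_+$ and $q_+$ introduce at $i$ and $j$ cannot propagate. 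The second case must bypass $\langle p_\bullet,q\rangle$ entirely, which your plan does not do.
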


\begin{proof} {\rm (i)} If $p \in \cP_k^{r,-1}$ and  $q \in \cP_k^{r,0} \sqcup \cP_k^{r,1}$, then the path that contains the pivot edge of $p$ in $d_p^p d_q^q$ is not a loop, so by Lemma \ref{GramMatrix:PivotLemma}, we have $\langle[p],q\rangle = 0$.

\medskip

{\rm (ii)} If $p, q \in \cP_k^{r,-1}$  then $q_\bullet \in \cP_k^{r,0}$, so $\langle [p], q_\bullet \rangle = 0$, by part (a).   Furthermore, each summand in $q^{(1)}$ is in $\cP_k^{r,1}$, so  $\langle [p], [q^{(1)}] \rangle = 0$, also by part (a).
Thus,
$$
\langle [p] , [q] \rangle = \langle [p],q\rangle - \langle [p], q_\bullet \rangle - \frac{u_{r-1}(x-1)}{u_r(x-1)} \langle [p],[q^{(1)}] \rangle
= \langle [p],q \rangle.
$$
Let the pivot edge of $p$ connect vertices $i$ and $k$, and let the rightmost horizontal edge of $q$ connect vertex $j$ and $k$ (the right endpoint must be $k$ in each case, since $p, q \in \cP_k^{r,-1}$).  Thus, the products   $d_p^p d_q^q$ and
$d_{p_+}^{p_+} d_{q_+}^{q_+}$ look like,
$$
d_p^p d_q^q = 
\begin{array}{c}
 \begin{array}{c}{ \beginpicture
 \setcoordinatesystem units <0.35cm, 0.4cm>        
\setplotarea x from 0 to 8.2, y from -1 to 1
\plot 0 -1 8.5 -1 8.5 1 0 1 0 -1 /
\setquadratic
\setdashes  <.4mm,.6mm>\plot 4 1   6 .25   8 1 /
\plot 4 -1   6 -.25   8 -1 /
\setsolid
\put{$\cdots$} at 2 0
\put{$\scriptstyle{\bullet}$} at 4 1 \put{$\scriptstyle{\bullet}$} at 4 -1 
\put{$\scriptstyle{\bullet}$} at 8 1 \put{$\scriptstyle{\bullet}$} at 8 -1 
\put{$\scriptstyle{k}$} at 8 -1.5
\put{$\scriptstyle{i}$} at 4 -1.5
\put{$\scriptstyle{j}$} at 6 -1.6
\endpicture}\end{array} \\
 \begin{array}{c}{ \beginpicture
 \setcoordinatesystem units <0.35cm, 0.4cm>        
\setplotarea x from 0 to 8.2, y from -1 to 1.25
\plot 0 -1 8.5 -1 8.5 1 0 1 0 -1 /
\setquadratic
\plot 6 1   7 .25   8 1 /
\plot 6 -1   7 -.25   8 -1 /
\put{$\cdots$} at 2 0
\put{$\scriptstyle{\bullet}$} at 6 1 \put{$\scriptstyle{\bullet}$} at 6 -1 
\put{$\scriptstyle{\bullet}$} at 8 1 \put{$\scriptstyle{\bullet}$} at 8 -1 
\endpicture}\end{array} 
\end{array}
\quad\hbox{and}\quad
d_{p_+}^{p_+} d_{q_+}^{q_+} = 
\begin{array}{c}
 \begin{array}{c}{ \beginpicture
 \setcoordinatesystem units <0.35cm, 0.4cm>        
\setplotarea x from 0 to 8.2, y from -1 to 1
\plot 0 -1 8.5 -1 8.5 1 0 1 0 -1 /
\plot 4 -1 4 1 /
\plot 8 -1 8 1 /
\setquadratic
\put{$\cdots$} at 2 0
\put{$\scriptstyle{\bullet}$} at 4 1 \put{$\scriptstyle{\bullet}$} at 4 -1 
\put{$\scriptstyle{\bullet}$} at 8 1 \put{$\scriptstyle{\bullet}$} at 8 -1 
\put{$\scriptstyle{k}$} at 8 -1.5
\put{$\scriptstyle{i}$} at 4 -1.5
\put{$\scriptstyle{j}$} at 6 -1.6
\endpicture}\end{array} \\
 \begin{array}{c}{ \beginpicture
 \setcoordinatesystem units <0.35cm, 0.4cm>        
\setplotarea x from 0 to 8.2, y from -1 to 1.25
\plot 0 -1 8.5 -1 8.5 1 0 1 0 -1 /
\plot 6 -1 6 1 /
\plot 8 -1 8 1 /
\put{$\cdots$} at 2 0
\put{$\scriptstyle{\bullet}$} at 6 1 \put{$\scriptstyle{\bullet}$} at 6 -1 
\put{$\scriptstyle{\bullet}$} at 8 1 \put{$\scriptstyle{\bullet}$} at 8 -1 
\endpicture}\end{array} 
\end{array}.
$$

If the pivot edge of $p$ is contained in a loop in the middle row in the product $d_p^p d_q^q$ then there is a path in this middle row from vertex $i$ to vertex $j$.   Furthermore, by Lemma \ref{GramMatrix:PivotLemma}, we have $\langle [p], q \rangle = \frac{u_{r-1}(x-1)}{u_r(x-1)} \langle p_\bullet, q \rangle$.   By comparing diagrams, we see $\langle p_\bullet, q \rangle = \langle p_+, q_+ \rangle$, and so the result holds in this case. 

If the pivot edge of $p$ is not contained in a loop in the middle row in the product $d_p^p d_q^q$, then by Lemma \ref{GramMatrix:PivotLemma}, we have $\langle [p], q \rangle = 0$. There is no  path from $i$ to $j$ in the middle row, and so it is not possible for the vertical edge at $j$ in $d_{q_+}^{q_+}$ to propagate through to the vertical edge at $i$ in $d_{p_+}^{p_+}$. Thus $\langle p_+, q_+ \rangle = 0 = \langle [p], q \rangle$.
\end{proof}

\begin{thm}\label{GramMatrixDiagonalized} With respect to the basis ${\tilde \cP_k^{r}} = \cP_k^{r,1} \sqcup \cP_k^{r,0} \sqcup \left\{\ [p]\ \big\vert\ p \in \cP^{r,-1}_k\ \right\}$, the Gram matrix $\G_k^{(r)}$ takes the following block-diagonal form
$$
\G_k^{(r)} =
\left(
\begin{tabular}{c|c|c}
$\G_{k-1}^{(r-1)}$ & ${\bf 0}$ & ${\bf 0}$ \\  \hline
${\bf 0}$ & $\G_{k-1}^{(r)}$ & ${\bf 0}$ \\  \hline
${\bf 0}$ &${\bf 0}$ & $\frac{u_{r+1}(x-1)}{u_{r}(x-1)} \G_{k-1}^{(r+1)}$
\end{tabular} \right),
$$
where $\G_{j}^{(i)}= \emptyset$ if $j < 0$ or $i > j$.   
\end{thm}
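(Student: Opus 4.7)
My plan is to interpret the basis change as a congruence transformation $\tilde{\G}_k^{(r)} = M^{\mathsf{T}} \G_k^{(r)} M$, where $M$ is the change-of-basis matrix from $\cP_k^r$ (ordered as in \eqref{eq:GramBlocks}) to $\tilde{\cP}_k^r$. Since $M$ acts as the identity on $\cP_k^{r,1}$ and $\cP_k^{r,0}$, while each $[p]$ with $p\in\cP_k^{r,-1}$ equals $p$ plus a linear combination of elements of $\cP_k^{r,0}\sqcup\cP_k^{r,1}$ (by recursion on \eqref{basischange}), the matrix $M$ is block-upper-unitriangular in the decomposition of $\G_k^{(r)}$. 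Consequently, the $(1,1)$ and $(2,2)$ diagonal blocks of $\tilde{\G}_k^{(r)}$ coincide with those of $\G_k^{(r)}$.

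Next, I will identify these unchanged blocks. For $p,q\in\cP_k^{r,1}$, both $d_p^p$ and $d_q^q$ have a vertical edge at position $k$; in the product $d_p^p d_q^q$, these vertical edges concatenate into a single vertical edge at position $k$, while the first $k-1$ columns give the product of the corresponding diagrams in $\M_{k-1}(x)$. Dropping the last vertex thus identifies $\cP_k^{r,1}$ with $\cP_{k-1}^{r-1}$ and realizes this block as $\G_{k-1}^{(r-1)}$. The analogous argument for $\cP_k^{r,0}$ (where position $k$ carries an isolated black vertex that persists through multiplication) identifies the $(2,2)$ block with $\G_{k-1}^{(r)}$. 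The off-diagonal blocks of $\tilde{\G}_k^{(r)}$ now vanish: entries $\langle q,[p]\rangle$ with $q\in\cP_k^{r,1}\sqcup\cP_k^{r,0}$ and $p\in\cP_k^{r,-1}$ are zero by part (i) of Proposition~\ref{GramMatrixProp}, and the transposed blocks vanish by symmetry of $\langle\cdot,\cdot\rangle$.

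It remains to analyze the $(3,3)$ block. By part (ii) of Proposition~\ref{GramMatrixProp}, $\langle[p],[q]\rangle = \frac{u_{r+1}(x-1)}{u_r(x-1)}\langle p_+, q_+\rangle$ for $p,q\in\cP_k^{r,-1}$, so it suffices to show that the matrix $\bigl(\langle p_+,q_+\rangle\bigr)_{p,q\in\cP_k^{r,-1}}$ equals $\G_{k-1}^{(r+1)}$ after a suitable re-indexing. I will verify that the map $p\mapsto\tilde{p}$, where $\tilde{p}$ is obtained from $p_+$ by deleting the final (white) vertex at position $k$, is a bijection $\cP_k^{r,-1}\to\cP_{k-1}^{r+1}$; the inverse sends a path $\tilde{q}\in\cP_{k-1}^{r+1}$ to the element of $\cP_k^{r,-1}$ formed by appending a white vertex at position $k$ and pairing it with the rightmost preceding white vertex via a horizontal edge (which is planar by construction). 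Because both $d_{p_+}^{p_+}$ and $d_{q_+}^{q_+}$ carry a vertical edge at position $k$, the vertical-edge concatenation argument of the previous paragraph gives $\langle p_+,q_+\rangle = \langle\tilde{p},\tilde{q}\rangle$, and this block equals $\frac{u_{r+1}(x-1)}{u_r(x-1)}\G_{k-1}^{(r+1)}$. The main technical input is Proposition~\ref{GramMatrixProp} (already proven); the only delicate remaining point is checking that the bijection $\cP_k^{r,-1}\leftrightarrow\cP_{k-1}^{r+1}$ is well-defined and respects the Gram-matrix row/column ordering implicitly used in~\eqref{eq:GramBlocks}, but this is a routine consequence of how the pivot edge of $p$ interacts with the vertices to its right in a planar 1-factor.
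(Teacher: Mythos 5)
Your proof is correct and follows essentially the same route as the paper's: identify the three blocks via the deletion-of-the-last-entry bijections $\cP_k^{r,1}\leftrightarrow\cP_{k-1}^{r-1}$, $\cP_k^{r,0}\leftrightarrow\cP_{k-1}^{r}$, $\cP_k^{r,-1}\leftrightarrow\cP_{k-1}^{r+1}$, kill the off-diagonal blocks with Proposition~\ref{GramMatrixProp}(i) (and symmetry of the form), and extract the scalar $u_{r+1}(x-1)/u_r(x-1)$ on the third block from Proposition~\ref{GramMatrixProp}(ii). The congruence-transformation framing you add is a presentational wrapper around what the paper does more tersely (directly computing $\langle p,q\rangle=\langle p',q'\rangle$ and $\langle[p],[q]\rangle=\tfrac{u_{r+1}}{u_r}\langle p',q'\rangle$), and it is sound: $M$ is block-unitriangular in the three-block decomposition, so the $(1,1)$ and $(2,2)$ blocks of $M^{\mathsf T}\G M$ are untouched. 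Two small remarks. First, the ordering compatibility you defer at the end is not really about "how the pivot edge interacts with the vertices to its right"; it is simply that within each of the three pieces $\cP_k^{r,1},\cP_k^{r,0},\cP_k^{r,-1}$ the last coordinate $a_k$ is constant, so the order \eqref{ordering} restricted there is the recursive order on $(a_1,\dots,a_{k-1})$, which is exactly the order on $\cP_{k-1}$ after deletion — the paper states this in one line. Second, your $\tilde p=(p_+)'$ equals $p'$ on the nose (the left pivot endpoint already carries $a_i=1$ in $p$), which is how the paper phrases the bijection; note in passing that by the definition given, $p_+$ actually lies in $\cP_k^{r+2,1}$ rather than $\cP_k^{r+1,1}$ as stated in Section~\ref{sec:BasisChange}, but this off-by-one does not affect $\langle p_+,q_+\rangle$, the deletion bijection, or the theorem.
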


\begin{proof}  For $p \in \cP_k^r$, let $p'$ be the Motzkin path given by deleting the $k$th entry in $p$.  Then  $p \mapsto p'$ gives a bijection between $\cP_k^{r,1}$ and $\cP_{k-1}^{r-1}$ and a bijection between $\cP_k^{r,0}$ and $\cP_{k-1}^{r}$.   If $p \in \cP_k^{r,-1}$, then $p'= (p_+)'$, so $p \mapsto  (p_+)'$ is a bijection between $\cP_k^{r,-1}$ and $\cP^{r+1}_{k-1}$.  Since we are simply deleting the last entry, each bijection preserves the ordering of Motzkin paths defined in \eqref{ordering}.   
Now, if $p,q \in \cP_k^{r,1}$ or $p,q \in \cP_k^{r,0}$, then $\langle p, q \rangle = \langle p', q' \rangle$.  If $p,q \in \cP_k^{r,-1}$, then by Proposition \ref{GramMatrixProp}(b), $\langle [p], [q] \rangle = \frac{u_{r+1}(x-1)}{u_r (x-1)} \langle p_+, q_+ \rangle =  \frac{u_{r+1}(x-1)}{u_r (x-1)} \langle p', q' \rangle$.  All other inner products are 0 by Proposition \ref{GramMatrixProp}(a) and the fact that if $p \in  \cP_k^{r,1}, q \in  \cP_k^{r,0} $, then $\langle p, q \rangle = 0$.
\end{proof}

\end{subsection}

 \begin{subsection}{Determinant of the Gram matrix} 
\begin{thm}  \label{thm:GramDet}
Recall from Section \ref{subsec:MotzkinPaths} that $\mathsf{m}_{k,r+2s}$ is the number of Motzkin paths of length $k$ and rank $r + 2s$.   Then, for each $k>0$ and $0 \le r \le k$, we have
\begin{equation}\label{eq:GramDet}
\det( \G_k^{(r)}) = \prod_{s=1}^{\lfloor \frac{k-r}{2} \rfloor} \left(\frac{u_{s+r}(x-1)}{u_{s-1}(x-1)}\right)^{\mathsf {m}_{k,r+2s}}.
\end{equation}
 \end{thm}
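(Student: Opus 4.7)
The plan is to prove \eqref{eq:GramDet} by induction on $k$, with Theorem~\ref{GramMatrixDiagonalized} supplying the key recursion. Taking determinants of the block-diagonal decomposition there, and noting that the scalar $\frac{u_{r+1}(x-1)}{u_r(x-1)}$ multiplies a block of size $\mathsf{m}_{k-1,r+1}\times\mathsf{m}_{k-1,r+1}$, one obtains
\begin{equation*}
\det(\G_k^{(r)}) \;=\; \det(\G_{k-1}^{(r-1)})\,\det(\G_{k-1}^{(r)})\,\det(\G_{k-1}^{(r+1)})\,\left(\frac{u_{r+1}(x-1)}{u_r(x-1)}\right)^{\mathsf{m}_{k-1,r+1}},
\end{equation*}
with the convention that the determinant of an empty matrix is $1$. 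The base cases $r\in\{k-1,k\}$ are immediate from $\G_k^{(k)}=(1)$ and $\G_k^{(k-1)}=\mathsf{I}_k$, both matching the empty-product value $1$ on the right-hand side of \eqref{eq:GramDet}.

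For the inductive step, I would pass to additive bookkeeping. Writing $\det(\G_k^{(r)}) = \prod_{j\ge 0} u_j(x-1)^{\alpha_j(k,r)}$, the right-hand side of \eqref{eq:GramDet} unfolds (set $s=j-r$ in the numerator and $s=j+1$ in the denominator) to
\begin{equation*}
\alpha_j(k,r) \;=\; [j\ge r{+}1]\,\mathsf{m}_{k,\,2j-r} \;-\; \mathsf{m}_{k,\,r+2j+2},
\end{equation*}
with the usual convention $\mathsf{m}_{k,\ell}=0$ for $\ell<0$ or $\ell>k$. Substituting the inductive hypothesis then reduces the induction step to the pointwise identity
\begin{equation*}
\alpha_j(k,r) \;=\; \alpha_j(k-1,r-1) + \alpha_j(k-1,r) + \alpha_j(k-1,r+1) + (\delta_{j,r+1}-\delta_{j,r})\,\mathsf{m}_{k-1,r+1},
\end{equation*}
valid for every $j\ge 0$. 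The denominator halves $-\mathsf{m}_{k,r+2j+2}$ match directly by a single application of the Motzkin restriction rule \eqref{eq:restrictionrule} at $\ell=r+2j+2$, and the numerator halves match likewise whenever $j\ge r+2$ or $j\le r-1$. At the two transition values $j=r$ and $j=r+1$, one of the three indicator brackets $[j\ge r]$, $[j\ge r+1]$, $[j\ge r+2]$ switches on but the corresponding restriction-rule term is missing, leaving an uncanceled $\mathsf{m}_{k-1,r+1}$ with opposite signs, exactly matching the correction factor $(u_{r+1}/u_r)^{\mathsf{m}_{k-1,r+1}}$ appearing in the recursion.

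The main obstacle is this bookkeeping: the three shifted indicator conditions must be tracked simultaneously, and the support ranges $1 \le s \le \lfloor(k-r)/2\rfloor$ in the formula for $\det(\G_k^{(r)})$ must be reconciled with the analogous ranges for $\det(\G_{k-1}^{(r\pm 1)})$ and $\det(\G_{k-1}^{(r)})$. Once the boundary cases near $j\in\{r,r+1\}$ are verified, the induction closes and \eqref{eq:GramDet} follows.
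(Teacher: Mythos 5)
Your proposal is correct and takes essentially the same route as the paper: both induct on $k$ via the block-diagonal recursion supplied by Theorem~\ref{GramMatrixDiagonalized}, and both verify the inductive step by fixing a single Chebyshev factor $u_j(x-1)$ and tracking its exponent using the Motzkin restriction rule \eqref{eq:restrictionrule}. Your $\alpha_j(k,r)$ bookkeeping is a tidy notational encapsulation of the exponent-matching the paper does directly; the only slip is the description of the base case, which should be $k=1$ (with $\G_1^{(0)}=\G_1^{(1)}=(1)$) rather than the observation that $r\in\{k-1,k\}$ returns the empty product --- the latter is consistency rather than a genuine induction base.
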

 
 \def\m{{\mathsf{m}}}

\begin{proof}  We use induction on $k$, Theorem \ref{GramMatrixDiagonalized}, and the fact that $\mathsf{m}_{k+1,r} = \mathsf{m}_{k,r-1} +\mathsf{m}_{k,r} + \mathsf{m}_{k,r+1}$ as in \eqref{eq:restrictionrule}.  The base case is when $k = 1$ and $r = 0$ or $1$. We have $\G_{1}^{(1)} = \G_{1}^{(0)} = (1)$, which trivially satisfy \eqref{eq:GramDet}.

From Theorem \ref{GramMatrixDiagonalized},  (again omitting the $(x-1)$'s in the expressions) we have 
$$
\det(\G_{k+1}^{(r)}) = \det(\G_{k}^{(r-1)}) \det(\G_k^{(r)}) \det(\G_{k}^{(r+1)}) \left( \frac{u_{r+1}}{u_r} \right)^{\mathsf{m}_{k,r+1}},
$$
and so by induction $\det(\G_{k+1}^{(r)})$ equals
$$
\prod_{i=1}^{\lfloor \frac{k-(r-1)}{2} \rfloor} \left(\frac{u_{i+r-1}}{u_{i-1}}\right)^{\mathsf{m}_{k,r-1+2i}}
\prod_{j=1}^{\lfloor \frac{k-r}{2} \rfloor} \left(\frac{u_{j+r}}{u_{j-1}}\right)^{\mathsf{m}_{k,r+2j}}
\prod_{\ell=1}^{\lfloor \frac{k-(r+1)}{2} \rfloor} \left(\frac{u_{\ell+r+1}}{u_{\ell-1}}\right)^{\mathsf{m}_{k,r+1+2\ell}}
 \left( \frac{u_{r+1}}{u_r} \right)^{\m_{k,r+1}}.
$$
If $s \not= r, r+1$, then the exponent of $u_{s-1}$ in the denominator of this product is
$$
\mathsf{m}_{k,r-1 + 2 s}+ \mathsf{m}_{k,r+ 2 s} + \mathsf{m}_{k,r+1 + 2 s} = \mathsf{m}_{k+1, r+ 2 s},
$$
and the exponent of $u_{s+r}$ in the numerator of this product is
$$
\mathsf{m}_{k,r-1 + 2 (s+1)} + \mathsf{m}_{k,r + 2 s} + \mathsf{m}_{k,r+1 + 2 (s-1)} = 
\mathsf{m}_{k,r + 2 s+1} + \mathsf{m}_{k,r + 2 s} + \mathsf{m}_{k,r + 2 s-1} = \mathsf{m}_{k+1, r+ 2 s}.
$$
The term $u_r$ appears in the  the numerator of the first product when $i = 1$ and its exponent is $\mathsf{m}_{k,r+1}$, which exactly cancels with the last term.  The term $u_{r+1}$ appears when $i = 2$, with exponent $\mathsf{m}_{k,r+3}$, and when $j= 1$ with exponent $\mathsf{m}_{k, r+2}$. It does not appear in the third term, but does appear in the fourth term with exponent $\mathsf{m}_{k,r+1}$.  These exponents sum to $\mathsf{m}_{k,r+3} + \mathsf{m}_{k,r+2} + \mathsf{m}_{k,r+1} = \mathsf{m}_{k+1,r+2}$.  These are exactly the exponents expected in \eqref{eq:GramDet} for $\det(\G_{k+1}^{(r)})$.
\end{proof}

\begin{example} 
\label{ex:DetSpecialCases} The following special cases are immediate consequences of  \eqref{eq:GramDet},
\begin{enumerate}
\item[{\rm (i)}]  $\det(\G_{k}^{(k)}) = \det(\G_{k}^{(k-1)}) = 1$,
\item[{\rm (ii)}]  $\det(\G_{k}^{(k-2)}) = u_{k-1}(x-1),$
\item[{\rm (iii)}]  $\det(\G_{k}^{(k-3)}) = \left(u_{k-2}(x-1)\right)^{k}.$
\end{enumerate}
\end{example}

 As a result of Theorem  \ref{thm:GramDet},  we get the next theorem which gives the precise conditions for  when  $\M_k(x)$ is a semisimple algebra over a field $\KK$ in terms of the Chebyshev
 polynomials in Section \ref{sec:CP}.

\begin{thm}\label{thm:cheby}   As an algebra over the field $\KK$,  the Motzkin algebra $\M_k(x)$ is semisimple if and only if $x \in \KK$ satisfies  $u_j(x-1) \neq 0$ for $1\leq j \leq k-1$. 
\end{thm}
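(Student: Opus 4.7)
The plan is to combine Theorem \ref{thm:cellmod}(iii) with the explicit determinant formula in Theorem \ref{thm:GramDet}. The former says $\M_k(x)$ is semisimple precisely when every cell Gram determinant $\det(\G_k^{(r)})$ is nonzero, and the latter writes these determinants in terms of the Chebyshev polynomials $u_i(x-1)$, so the whole proof reduces to tracking the vanishing of the product in \eqref{eq:GramDet}.

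For $(\Leftarrow)$, I would inspect \eqref{eq:GramDet} directly: each numerator factor $u_{s+r}(x-1)$ has index $s + r$ in $\{1, \ldots, k-1\}$ (since $s \geq 1$ and $s + r \leq r + \lfloor (k-r)/2 \rfloor \leq k-1$), and each denominator factor $u_{s-1}(x-1)$ is either $u_0 = 1$ or has index in $\{1, \ldots, k-2\}$. Under the hypothesis that $u_j(x-1) \neq 0$ for $1 \leq j \leq k-1$, every such factor is nonzero, so $\det(\G_k^{(r)}) \neq 0$ for every $r$, and $\M_k(x)$ is semisimple.

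For $(\Rightarrow)$, I would fix $j_0 \in \{1, \ldots, k-1\}$ minimal with $u_{j_0}(x-1) = 0$ and exhibit $r$ such that $\det(\G_k^{(r)})$ vanishes at $x$. Since the roots of $u_j$ are $2\cos(\pi m/(j+1))$, minimality of $j_0$ forces $x - 1 = 2\cos(\pi m_0/(j_0+1))$ with $\gcd(m_0, j_0+1) = 1$; a short check then gives $u_i(x-1) = 0$ if and only if $(j_0+1) \mid (i+1)$, i.e., if and only if $i \in S := \{j_0, 2j_0+1, 3j_0+2, \ldots\}$. I split into two cases. When $j_0 \geq \lceil k/2 \rceil$, I take $r = 2j_0 - k \geq 0$: the term with $s = k - j_0$ contributes $u_{j_0}(x-1)^{\mathsf{m}_{k,k}} = u_{j_0}(x-1)$ in the numerator, while the ranges of the indices $s+r$ and $s-1$ are narrow enough to forbid any element of $S$ from appearing in the denominator. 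When $j_0 < k/2$, I take $r = 0$ and pair up the contributions from consecutive elements of $S$; the order of vanishing of $\det(\G_k^{(0)})$ at $x$ then equals a sum of pairs $\mathsf{m}_{k,\, 2(j_0+1)\ell - 2} - \mathsf{m}_{k,\, 2(j_0+1)\ell}$ (plus possibly one unpaired positive term from the highest numerator index).

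The hard part will be showing that each such pair is strictly positive, i.e., the monotonicity $\mathsf{m}_{k, r} > \mathsf{m}_{k, r+2}$ for $1 \leq r \leq k-2$. I would prove this via the stronger statement $\mathsf{m}_{k, r} > \mathsf{m}_{k, r+1}$ for $1 \leq r \leq k-1$: setting $d_{k,r} := \mathsf{m}_{k,r} - \mathsf{m}_{k,r+1}$, these differences satisfy the Motzkin-style recursion $d_{k,r} = d_{k-1,r-1} + d_{k-1,r} + d_{k-1,r+1}$ for $r \geq 1$, so induction on $k$ handles all $r \geq 2$ immediately. The only delicate case is $r = 1$, where the boundary term $d_{k-1,0} = -\mathsf{m}_{k-2,2}$ is negative; here the inequality collapses, after using the recursion for $\mathsf{m}$, to $\mathsf{m}_{k-1,0} > \mathsf{m}_{k-1,3}$, which yields to a secondary induction.
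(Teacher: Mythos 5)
Your $(\Leftarrow)$ direction is correct and is a direct (non-inductive) repackaging of what the paper does via its block-diagonalization; it works because every Chebyshev index that appears in \eqref{eq:GramDet} lies in $\{0,1,\ldots,k-1\}$.  Your $(\Rightarrow)$ direction, however, rests on a monotonicity lemma that is false.  The strong form $\mathsf{m}_{k,r} > \mathsf{m}_{k,r+1}$ for $1 \le r \le k-1$ already fails at $(k,r) = (8,1)$: from the recursion \eqref{eq:restrictionrule} one computes $\mathsf{m}_{8,1} = 512$ and $\mathsf{m}_{8,2} = 518$, so $d_{8,1} = -6 < 0$.  Correspondingly your fallback inequality $\mathsf{m}_{k-1,0} > \mathsf{m}_{k-1,3}$ fails at $k-1 = 7$, since $\mathsf{m}_{7,0} = 127 < 133 = \mathsf{m}_{7,3}$.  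Even the weaker statement you actually use, $\mathsf{m}_{k,r} > \mathsf{m}_{k,r+2}$ on the range $2 \le r \le k-2$, is false: continuing the Motzkin triangle recursion gives $\mathsf{m}_{23,2} = 2657946907 < 2714401580 = \mathsf{m}_{23,4}$.  This is not a removable blemish but a structural obstruction: since $\mathsf{m}_{k,r}$ counts reflecting walks of length $k$ on $\mathbb{N}$ ending at $r$, the maximizer of $r \mapsto \mathsf{m}_{k,r}$ drifts like $\Theta(\sqrt{k})$, so for $k$ large the row of the Motzkin triangle \emph{increases} at every fixed small $r$, and no inequality of the form $\mathsf{m}_{k,r} > \mathsf{m}_{k,r+c}$ with $r,c$ fixed can hold uniformly in $k$.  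Your ``secondary induction'' therefore has no base on which to start for large $k$, and your Case 2 estimate on $\mathrm{ord}_x \det(\G_k^{(0)})$ does not go through term by term (the first pair $\mathsf{m}_{k,2j_0}-\mathsf{m}_{k,2j_0+2}$ may already be negative).

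The paper avoids this entirely by inducting on $k$ through Theorem~\ref{GramMatrixDiagonalized}: the only \emph{new} Chebyshev factor appearing when passing from $\M_{k-1}(x)$ to $\M_k(x)$ is $u_{k-1}(x-1)$ (coming from $u_{r+1}/u_r$ at $r = k-2$), so no cancellation analysis or monotonicity of the $\mathsf{m}_{k,r}$ is ever required.  Alternatively, the ``only if'' direction can be handled in one line by noting that $p_k\,\M_k(x)\,p_k \cong \M_{k-1}(x)$ (a corner algebra of a semisimple algebra is semisimple), which reduces necessity inductively to the single determinant $\det(\G_{j}^{(j-2)}) = u_{j-1}(x-1)$ from Example~\ref{ex:DetSpecialCases}.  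Either of these routes bypasses the false lemma; your present proposal does not.
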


\begin{proof}  
By \cite[Thm.~3.8]{GL}, $\M_k(x)$ is semisimple if and only if the form $\langle \cdot, \cdot \rangle$ is nondegenerate on $\C_k^{(r)}$ for each $0 \le r \le k$. 
This is the equivalent to $\det(\G_k^{(r)}) \not= 0$ for each $0 \le r \le k$.   We proceed by induction on $k$. If $k = 1$, then $\G_1^{(1)} = \G_1^{(0)} = (1)$ and $u_0(x-1) = 1.$
If $k > 1$, then by Theorem \ref{GramMatrixDiagonalized}, $\M_k(x)$ is semisimple if and only if $x$ avoids the zeros of $\det(\G_{k-1}^{(r-1)}), \det(\G_{k-1}^{(r)})$, and $\frac{u_{r+1}(x-1)}{u_r(x-1)}\det(\G_{k-1}^{(r+1)})$.  By induction,  $u_j(x-1) \neq 0$ for
$1 \leq j \leq k-2$, and   $\frac{u_{r+1}(x-1)}{u_r(x-1)} u_j(x-1) \neq 0$ for $1 \le j \le k-2, \  0 \leq r \le k -2$.  The maximum subscript appearing in these expressions is $k-1$,  and it is attained when $r = j= k -2$.  In that case,  $\frac{u_{k-1}(x-1)}{u_{k-2}(x-2)} u_{k-2}(x-2) = u_{k-1}(x-1)$.
\end{proof}
\end{subsection}

\begin{remark} \label{rem:CMPX}{\rm
One can verify that the Motzkin algebras $\M_0(x) \subseteq \M_1(x) \subseteq \cdots$ satisfy the six axioms of \cite{CMPX} to be a ``tower of recollment."  (In fact, the properties of the Jones basic construction, verified in Section \ref{Sec:BasicConstruction}, constitute several of those axioms.)  It then follows from the arguments used in \cite[Thm.~1.1 (ii), Cor.~5.1]{CMPX} that $\M_k(x)$ is semisimple  if and only if $x \in \KK$ is chosen such that $\det(\G_{j}^{(i)}) \not=0$ for each $1 \le j \le k$ and each $i \in\{ j-3, j-2,j-1, j\}$.  Since by Example \ref{ex:DetSpecialCases},  $\det(\G_{j}^{(j-3)}) = \big(u_{j-2}(x-1)\big)^j$,  $\det(\G_{j}^{(j-2)}) = u_{j-1}(x-1)$, and $\det(\G_{j}^{(j-1)}) = \det(\G_{j}^{(j)}) = 1$, 
this approach generates exactly the same set of bad values for the parameter $x$.  However, the proof that 
$\det(\G_{j}^{(j-3)}) = \big(u_{j-2}(x-1)\big)^j$
entails  nearly the same amount of work  as our general proof of Theorems \ref{GramMatrixDiagonalized} and  \ref{thm:GramDet}, and the approach adopted here yields the general form of the determinant in Theorem 
\ref{thm:GramDet} for all $k$ and $r$.}

  \end{remark}
  
  \begin{remark} {\rm The change of basis in  \eqref{basischange} should block-diagonalize the Gram matrix for the Temperley-Lieb algebra (compare \cite{W}),  except in the Temperley-Lieb case the diagram $p_\bullet$ does not exist and should be omitted.}
  \end{remark}

\end{section}

\end{document}